\newtheorem{theorem}{Theorem}[section]
\newtheorem{lemma}[theorem]{Lemma}
\newtheorem{proposition}[theorem]{Proposition}
\newtheorem{corollary}[theorem]{Corollary}
\theoremstyle{definition}
\newtheorem{definition}[theorem]{Definition}
\theoremstyle{remark}
\newtheorem{remark}[theorem]{Remark}
\numberwithin{equation}{section}
\newcommand{\N}{\mathbb{N}}
\newcommand{\R}{\mathbb{R}}
\def\phi{{\varphi}}
\def\epsilon{{\varepsilon}}
\DeclareMathOperator{\graph}{graph}
\def\Hinrichtung{``\ensuremath{\Rightarrow}'':\text{ }}
\def\Rueckrichtung{``\ensuremath{\Leftarrow}'':\text{ }}
\long\def\umbruch{\displaybreak[0]}
\def\equad{\mathrel{\phantom{=}}{}}
\long\def\neueZeile{{\rule{0mm}{1mm}\\[-3.25ex]\rule{0mm}{1mm}}}
\begin{document}

\title[Existence of solutions to a singular ODEs]{Existence proofs for
  rotationally symmetric translating solutions to mean curvature flow}

% author one information
\author{Hakar Raji}

\author{Oliver C. Schn\"urer}
\address{Fachbereich Mathematik und Statistik, Universit\"at Konstanz,
  78457 Konstanz, Germany}
% \curraddr{}
\def\AmSeeHome{@uni-konstanz.de}
\email{Hakar.Raji\AmSeeHome}
\email{Oliver.Schnuerer\AmSeeHome}
\thanks{}

\subjclass[2020]{Primary 53E10, 34A12}
% 53E10 Flows related to mean curvature
% 34A12 Initial value problems, existence, uniqueness, continuous
% dependence and continuation of solutions to ordinary differential
% equations
% 34A36 Discontinuous ordinary differential equations

\keywords{}

\date{\today}

\dedicatory{}

\begin{abstract}
  There exist rotationally symmetric translating solutions to mean
  curvature flow that can be written as a graph over Euclidean
  space. This result is well-known. Its proof uses the symmetry and
  techniques from partial differential equations. However, the result
  can also be formulated as an existence result for a singular
  ordinary differential equation. Here, we provide different methods
  to prove existence of these solutions based on the study of the
  singular ordinary differential equation without using methods from
  partial differential equations.
\end{abstract}

\maketitle

\setcounter{tocdepth}{1}
\tableofcontents

\section{Introduction}

Translating solutions appear as a type II blow up in mean curvature
flow. If they are graphical, they can be written as $\graph u$ for
some function $u\colon\Omega\to\R$ and some domain
$\Omega\subset\R^n$. Then $u$ fulfils the partial differential
equation
\begin{equation}
  \label{pde}
  1=\sqrt{1+|\nabla u|^2}\cdot\text{div}\left(\frac{\nabla
      u}{\sqrt{1+|\nabla u|^2}}\right)
\end{equation}
in $\Omega$. For $n=1$, the grim reaper is an explicitly known
solution. For $n\ge2$, particularly simple examples are rotationally
symmetric solutions. S.~Altschuler and L.~Wu proved existence
\cite{AltschulerWu} for $n=2$ by solving the partial differential
equation on a sequence of balls $B_r(0)$ with constant boundary
values. As these solutions are rotationally symmetric, those defined
on different balls coincide up to an additive constant. Choosing the
additive constants such that $u(0)$ is independent of $r$, they find a
solution defined on all of $\R^n$. Their proof extends directly to
higher dimensions, see \cite{JCOSFSMCFStability}.

During the last years, there has been some interest in translating
solutions to mean curvature flow. For example, D. Hoffman, F. Mart\'in
and B. White have found Sherk-like translators
\cite{ScherklikeTranslators}.

Here, we address existence of rotationally symmetric solutions to
\eqref{pde} differently. We assume rotational symmetry $u(x)=U(|x|)$
and deduce in Lemma \ref{ode deriv lem} the ordinary differential
equation
\begin{equation}
  \label{phi intro eq}
  \phi'(r) =\left(1+\phi^2(r)\right)\left(1-\frac{n-1}r\phi(r)\right)
\end{equation}
for $\phi=U'$ and $r>0$. Our main result is
\begin{theorem}
  Let $n\ge2$. Then there exists a function
  $\phi\colon[0,\infty)\to\R$ solving \eqref{phi intro eq} such that
  \[\R^n\ni x\mapsto u(x)=\int\limits_0^{|x|}\phi(r)\,dr\] is of class
  $C^2$ and solves \eqref{pde}.
\end{theorem}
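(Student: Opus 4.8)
The plan is to produce a solution $\phi$ of \eqref{phi intro eq} on all of $[0,\infty)$ with $\phi(0)=0$, and then to read off the regularity of $u$ and the equation \eqref{pde}. On any interval of the form $(0,\delta]$ with an initial value prescribed at $\delta$, equation \eqref{phi intro eq} is a regular ODE with locally Lipschitz right-hand side, so Picard--Lindel\"of gives local existence and uniqueness and the standard continuation principle holds; the only genuinely delicate point is getting the solution started at the singular point $r=0$.

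For this singular initial value problem I would substitute $\phi(r)=r\,\eta(r)$, which turns \eqref{phi intro eq} into $r\eta'=1-n\eta+r^2\eta^2\bigl(1-(n-1)\eta\bigr)$, and then multiply by the integrating factor $r^{n-1}$: with $F(\eta):=\eta^2\bigl(1-(n-1)\eta\bigr)$ this reads $\bigl(r^n\eta\bigr)'=r^{n-1}+r^{n+1}F(\eta)$, hence, absorbing the singularity,
\[
\eta(r)=\frac1n+r^{-n}\int_0^r s^{n+1}F\bigl(\eta(s)\bigr)\,ds .
\]
On the closed ball $\bigl\{\eta\in C([0,\delta]):\ \|\eta-\tfrac1n\|_\infty\le\rho_0\bigr\}$, with $\rho_0>0$ fixed so small that $\tfrac1n+\rho_0<\tfrac1{n-1}$, the map $T$ given by the right-hand side satisfies $|(T\eta)(r)-\tfrac1n|\le\tfrac{C}{n+2}\,r^2$ and $|(T\eta_1)(r)-(T\eta_2)(r)|\le\tfrac{L}{n+2}\,\delta^2\,\|\eta_1-\eta_2\|_\infty$, because $\int_0^r s^{n+1}\,ds=\tfrac{r^{n+2}}{n+2}$ exactly cancels the weight $r^{-n}$ up to a factor $r^2$; here $C$ and $L$ bound $|F|$ and the Lipschitz constant of $F$ on the ball. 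Taking $\delta$ small, $T$ maps the ball into itself and is a contraction, so Banach's fixed point theorem gives a unique $\eta\in C([0,\delta])$ with $\eta(0)=1/n$. Differentiating the integral equation shows $\eta\in C^1((0,\delta])$ with $\phi=r\eta$ solving \eqref{phi intro eq} there, and from $F(\eta(s))\to F(1/n)$ one obtains $\eta(r)=\tfrac1n+O(r^2)$, so $\phi$ is differentiable at $0$ with $\phi(0)=0$ and $\phi'(0)=1/n$. I expect this step --- spotting the substitution and weight that tame the $1/r$ singularity --- to be the main obstacle; what follows is soft.

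Next I would prove the a priori bounds $0<\phi(r)<\tfrac{r}{n-1}$ on the maximal interval of existence $(0,R)$ by a barrier argument. They hold near $0$ since $\phi=r\eta$ with $\eta$ close to $1/n\in\bigl(0,\tfrac1{n-1}\bigr)$. If $\phi$ first hit $0$ at some $r_0$, then \eqref{phi intro eq} would force $\phi'(r_0)=1>0$, contradicting $\phi>0$ on $(0,r_0)$; if $\phi$ first hit the curve $r\mapsto\tfrac{r}{n-1}$ at some $r_1$, then $\phi'(r_1)=0<\tfrac1{n-1}=\tfrac{d}{dr}\bigl(\tfrac{r}{n-1}\bigr)$, so $\phi-\tfrac{r}{n-1}$ would have been positive just before $r_1$, again a contradiction. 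In particular $0<\tfrac{n-1}{r}\phi<1$, so $|\phi'|\le1+\phi^2\le1+\bigl(\tfrac{R}{n-1}\bigr)^2$ if $R<\infty$; then $\phi$ is Lipschitz on $(0,R)$, extends continuously to $R$, and can be continued, contradicting maximality. Hence $R=\infty$, and $\phi\in C^\infty((0,\infty))\cap C^1([0,\infty))$.

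Finally, with $V(\rho):=\int_0^\rho\phi$ we have $u(x)=V(|x|)$, $V\in C^2$, $V'(0)=0$, $V''(0)=1/n$. Away from the origin $u$ is as smooth as $\phi$, and $\partial_i\partial_j u=\tfrac{\phi(\rho)}{\rho}\delta_{ij}+\bigl(\phi'(\rho)-\tfrac{\phi(\rho)}{\rho}\bigr)\tfrac{x_ix_j}{\rho^2}$ with $\rho=|x|$; since $\tfrac{\phi(\rho)}{\rho}=\eta(\rho)\to\tfrac1n$ and $\phi'(\rho)\to\tfrac1n$ as $\rho\to0$, the maps $\nabla u$ and $D^2u$ extend continuously across the origin with $\nabla u(0)=0$ and $D^2u(0)=\tfrac1n I$, so $u\in C^2(\R^n)$. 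On $\R^n\setminus\{0\}$ the fact that $\phi=V'$ solves \eqref{phi intro eq} translates, via the computation of Lemma \ref{ode deriv lem}, into \eqref{pde} for $u$. At the origin $\nabla u(0)=0$ reduces the right-hand side of \eqref{pde} to $\Delta u(0)=\operatorname{tr}D^2u(0)=1$, matching the left-hand side, which completes the proof.
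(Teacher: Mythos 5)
Your proof is correct, and the key step is genuinely different from all five of the paper's methods. The crux in every approach is starting the solution at the singular point $r=0$; you handle it by the classical regular-singular-point device: the substitution $\phi=r\eta$ turns \eqref{phi intro eq} into $r\eta'=1-n\eta+r^2F(\eta)$, and the integrating factor $r^{n-1}$ absorbs the dangerous linear term $-n\eta/r$ \emph{exactly}, leaving the fixed-point map $\eta\mapsto\frac1n+r^{-n}\int_0^r s^{n+1}F(\eta)\,ds$ with contraction constant $O(\delta^2)$ in the plain sup norm. The paper's closest relative is Section~\ref{weighted spaces sec}, which also runs a Picard iteration, but there the singular linear part is not removed: it survives as a Lipschitz constant $L$ that must be beaten by a weight $p>L$ in the norm $\sup r^{-p}|\cdot|$ (Theorem~\ref{1r sing ex thm}), which in turn forces the construction of high-order polynomial approximate solutions (Corollary~\ref{approx poly fct cor}). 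Your version buys simplicity — no weighted Banach spaces, no approximate solutions, the seed $\eta\equiv\frac1n$ suffices — at the price of spotting the right substitution and weight; the paper's version is more general, as it applies to any $f\left(r,\frac{u}{r}\right)$ with a uniform Lipschitz bound rather than exploiting the specific structure of \eqref{phi intro eq}. The remaining steps of your argument coincide in substance with the paper's: your barriers $0<\phi(r)<\frac r{n-1}$ and the continuation argument reproduce Lemma~\ref{existence large r lem}, and your verification that $\frac{\phi(\rho)}\rho$ and $\phi'(\rho)$ both tend to $\frac1n$ is exactly the criterion of Proposition~\ref{reg orig prop}. The only point you assert rather than prove is that continuous extendability of $\nabla u$ and $D^2u$ across the origin upgrades $u$ to $C^2(\R^n)$; this is a standard removability fact (the paper isolates it as Propositions~\ref{remove pt diff prop} and~\ref{reg orig prop}), so it is not a gap of substance, but in a full write-up you should include that one-dimensional lemma.
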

Here, the function $u$ is defined precisely in such a way to reverse
the construction above that lead from $u$ to $\phi$. In order to prove
this result, we establish the existence of a solution $\phi$ to
\eqref{phi intro eq} that fulfils conditions that ensure
$C^2$-regularity of $u$ at the origin. By continuity, $u$ solves
\eqref{pde} not only in $\R^n\setminus\{0\}$, but in all of $\R^n$.

The motivation to write this paper is to provide an existence proof
for rotationally symmetric solutions to mean curvature flow that is
accessible for people that do not (yet) know much about partial
differential equations. As a side effect, we encounter several
techniques that are useful in nonlinear analysis.

The rest of the paper is organised as follows. In Section \ref{ode
  deriv sec}, we derive the ordinary differential equation (ODE) for
$\phi$, discuss regularity at the origin and prove existence for
$r\ge r_0>0$.

Each of the subsequent five sections addresses a method to solve
\eqref{phi intro eq} and to establish $C^2$-regularity at the origin.

In Section \ref{weighted spaces sec}, we employ weighted
spaces with norm $\sup_{r>0}r^{-p}|u(r)|$ and adapt the standard
existence proof of Picard-Lindel\"of to those norms. For this method,
we also need to find sufficiently good approximative solutions. 

In Section \ref{shooting technique sec}, we solve the ODE with initial
value at some positive $r$. If the initial value is too big, solutions
will tend to infinity as $r\downarrow0$, and if it is too small, they
will tend to $-\infty$. By continuous dependence on initial data, we
find the right initial value somewhere in the middle. Technically,
this is carried out by transforming the equation such that $r=0$ is
mapped to $\infty$. Then we inductively restrict possible initial
values in such a way that solutions stay nicely bounded for longer and
longer intervals. In the limit, we obtain a solution. We also
establish uniqueness.

In the following section, Section \ref{approx pbls sec}, we impose the
initial value $\phi\left(\frac1k\right)=\frac1n\frac1k$, which is
motivated by a formal expansion near $r=0$, and solve \eqref{phi intro
  eq} for $r\ge\frac1k$. Letting $k\to\infty$, we find a solution.

In Section \ref{reg eq sec}, we regularise \eqref{phi intro eq} using
a parameter $\epsilon>0$ such that the equation becomes regular near
$r=0$. Then we solve this equation subject to $\phi(0)=0$. As
$\epsilon\downarrow0$, the corresponding solutions converge to a
solution of \eqref{phi intro eq}.

A completely different approach is used in Section \ref{power series
  sec}. As solutions to analytic elliptic partial differential
equations are analytic \cite{MorreyMultiple}, we employ power
series. Assuming that $\phi$ can locally be written as a power series,
we derive a recursive formula for the coefficients in that power
series. Then we establish a decay property for the coefficients that
allows us to show that the convergence radius of the power series is
positive. Finally, we show that the power series solves \eqref{phi
  intro eq}.

\section{Derivation of the ordinary differential equation}
\label{ode deriv sec}

This section is mainly concerned with the derivation of the ordinary
differential equation and a discussion about $C^2$-regularity at the
origin of rotationally symmetric functions, but includes also a
paragraph about our notation and an existence proof for $r\ge r_0>0$. 

\subsection{Notation}
Here and in the following, we use $u_i=\frac{\partial u}{\partial
  x^i}$, \ldots{} and employ the Einstein summation convention to sum
over repeated upper and lower latin indices from $1$ to $n$. 

\subsection{Derivation of the equation}
Outside of the origin, Equation \eqref{pde} for rotationally symmetric
functions $u$ can easily be rewritten as an ordinary differential
equation. 
\begin{lemma}
  \label{ode deriv lem}
  Let $n\in\N$ and $0<R\le\infty$.  Consider a function
  $u\colon B_R(0)\setminus\{0\}\to\R$, $B_R(0)\subset\R^n$, of the form
  $u(x)=U(|x|)$. Then $u$ is a $C^2$-solution to \eqref{pde} in
  $B_R(0)\setminus\{0\}$ if and only if $U\in C^2((0,R))$ and 
  $\phi(r):=U'(r)$ solves
  \begin{equation}
    \label{ode}
    \phi'(r)=\left(1+\phi^2(r)\right)\left(1-\frac{n-1}r\phi(r)\right)
  \end{equation}
  for $r\in(0,R)$.
\end{lemma}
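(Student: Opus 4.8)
The plan is to reduce the equivalence to one pointwise computation. For a rotationally symmetric $u(x)=U(|x|)$ with $U\in C^2((0,R))$, I would establish that at every $x\in B_R(0)\setminus\{0\}$, writing $r=|x|$ and $\phi=U'$,
\[
\sqrt{1+|\nabla u|^2}\cdot\text{div}\!\left(\frac{\nabla u}{\sqrt{1+|\nabla u|^2}}\right)=\frac{\phi'(r)}{1+\phi^2(r)}+\frac{n-1}{r}\,\phi(r).
\]
Granting this identity, \eqref{pde} holds throughout $B_R(0)\setminus\{0\}$ if and only if the right-hand side equals $1$ for every $r\in(0,R)$, and multiplying by $1+\phi^2(r)$ this is exactly \eqref{ode}. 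The two regularity statements are the routine part: if $u\in C^2(B_R(0)\setminus\{0\})$, then restricting to the ray $t\mapsto(t,0,\dots,0)$, $t\in(0,R)$, shows $U\in C^2((0,R))$; conversely $x\mapsto|x|$ is $C^\infty$ on $\R^n\setminus\{0\}$, so $U\in C^2((0,R))$ forces $u\in C^2(B_R(0)\setminus\{0\})$ by the chain rule.

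To prove the identity I would compute directly. Differentiating gives $\nabla u=\phi(r)\,x/r$, hence $|\nabla u|^2=\phi^2(r)$ and $\sqrt{1+|\nabla u|^2}=\sqrt{1+\phi^2(r)}$, so the vector field $\frac{\nabla u}{\sqrt{1+|\nabla u|^2}}$ equals $g(r)\,x$ with $g(r)=\frac{\phi(r)}{r\sqrt{1+\phi^2(r)}}$. For such a field, $\text{div}(g(r)\,x)=r\,g'(r)+n\,g(r)$, which upon writing $g=h/r$ with $h(r)=\frac{\phi(r)}{\sqrt{1+\phi^2(r)}}$ becomes $h'(r)+\frac{n-1}{r}\,h(r)$; equivalently this is the familiar $\frac{1}{r^{n-1}}\big(r^{n-1}h(r)\big)'$ for the divergence of a radial field, which makes the $\frac{n-1}{r}$ term transparent. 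A one-line computation gives $h'(r)=\phi'(r)\,(1+\phi^2(r))^{-3/2}$, and multiplying $h'(r)+\frac{n-1}{r}h(r)$ by $\sqrt{1+\phi^2(r)}$ produces the right-hand side of the displayed identity.

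I do not expect a genuine obstacle: the lemma is in essence a change of variables for the mean curvature operator restricted to radial functions. The only points requiring attention are bookkeeping ones — phrasing the equivalence at the level of the pointwise identity so that no integration or boundary behaviour is involved, and checking that all chain-rule manipulations are legitimate, which they are since everything happens where $r>0$ and $x\mapsto|x|$ is smooth, and $C^2$-regularity of $U$ is precisely what is needed both for $\phi'$ to exist and for $u$ to be twice continuously differentiable.
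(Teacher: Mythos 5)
Your proposal is correct, and it establishes exactly the same pointwise identity as the paper, namely that the mean curvature operator applied to $u(x)=U(|x|)$ equals $\frac{\phi'(r)}{1+\phi^2(r)}+\frac{n-1}{r}\phi(r)$; the regularity bookkeeping (restriction to a ray in one direction, chain rule with the smoothness of $x\mapsto|x|$ away from the origin in the other) also matches what the paper does. The only difference is in how the computation is organised. The paper first rewrites the operator in non-divergence form as $\delta^{ij}u_{ij}-\frac{u_{ij}u^iu^j}{1+|\nabla u|^2}$, substitutes the explicit Hessian $u_{ij}=U''\frac{x_ix_j}{|x|^2}+\frac{U'}{|x|}\bigl(\delta_{ij}-\frac{x_ix_j}{|x|^2}\bigr)$ of a radial function, and simplifies the resulting contractions. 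You instead keep the divergence form, observe that $\frac{\nabla u}{\sqrt{1+|\nabla u|^2}}=\frac{h(r)}{r}\,x$ with $h=\phi/\sqrt{1+\phi^2}$, and invoke the radial-field identity $\operatorname{div}\bigl(\frac{h(r)}{r}x\bigr)=h'(r)+\frac{n-1}{r}h(r)=\frac1{r^{n-1}}\bigl(r^{n-1}h(r)\bigr)'$ together with $h'=\phi'(1+\phi^2)^{-3/2}$. Your route is a little slicker and makes the origin of the $\frac{n-1}{r}$ term transparent; the paper's route has the side benefit that the explicit formulas for $u_i$ and $u_{ij}$ it produces are reused later (in Proposition \ref{reg orig prop}) to analyse $C^2$-regularity at the origin, which is presumably why the authors chose it.
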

\begin{proof}
  \neueZeile
  \begin{itemize}
  \item[\Hinrichtung] Assume first that $u\in C^2$ solves \eqref{pde} in
    $B_R(0)\setminus\{0\}$. The function
    $(0,R)\ni r\mapsto u(re_1)=U(r)$ is of class $C^2$. Hence, we can
    calculate for $r=|x|>0$
    \begin{align*}
      u_i(x)
      &=U'(|x|)\frac{x_i}{|x|},\umbruch\\
      u_{ij}(x)
      &=U''(|x|)\frac{x_ix_j}{|x|^2}
        +U'(|x|)\frac1{|x|}\left(\delta_{ij}
        -\frac{x_ix_j}{|x|^2}\right),\umbruch\\
      |\nabla u|^2
      &=\left(U'(r)\right)^2\delta^{ij}\frac{x_ix_j}{|x|^2}\umbruch\\
      &=\left(U'(r)\right)^2,\umbruch\\
      1
      &=\sqrt{1+|\nabla u|^2} \cdot\text{div}\, \left(\dfrac{\nabla
        u}{\sqrt{1+|\nabla u|^2}}\right)\umbruch\\
      &=\delta^{ij} u_{ij}(x)-\frac{u_{ij}u^iu^j}{1+|\nabla
        u|^2}(x)\umbruch\\ 
      &=U''(r)\frac{\delta^{ij}x_ix_j}{|x|^2} +U'(r)\frac1{|x|}
        \delta^{ij}\left(\delta_{ij}-\frac{x_ix_j}{|x|^2}\right)\\
      &\equad-U''(r)\frac{x_ix_j}{|x|^2}\left(U'(r)\right)^2
        \frac{x^ix^j}{|x|^2}\frac1{1+\left(U'(r)\right)^2}\\
      &\equad-U'(r)\frac1{|x|}
        \left(\delta_{ij}-\frac{x_ix_j}{|x|^2}\right) \left(U'(r)\right)^2
        \frac{x^ix^j}{|x|^2} \frac1{1+\left(U'(r)\right)^2}
        \umbruch\\
      &=U''(r)\left(1-\frac{\left(U'(r)\right)^2}
        {1+\left(U'(r)\right)^2}\right)+\frac{n-1}rU'(r)-0\umbruch\\
      &=\frac{U''(r)}{1+\left(U'(r)\right)^2}
        +\frac{n-1}rU'(r)\umbruch\\
      &=\frac{\phi'(r)}{1+\phi^2(r)} +\frac{n-1}r\phi(r).
    \end{align*}
    Rearranging yields \eqref{ode}.
  \item[\Rueckrichtung] If $U\in C^2$ outside the origin, we see from
    calculations as above that $u\in C^2$ outside the origin and $u$
    fulfils \eqref{pde}.\qedhere
  \end{itemize}
\end{proof}

\begin{remark}
  Equation \eqref{ode} is special in the case $n=1$. An even solution
  is explicitly known, the grim reaper, $u(r)=\log\cos(r)$,
  $\phi(r)=u'(r)$, $-\frac\pi2<r<\frac\pi2$. Any solution is locally
  of the form $u(r-c_1)+c_2$, $c_i\in\R$. This answers the existence
  and uniqueness of solutions completely for $n=1$. Therefore, we may
  focus on the case $n\ge2$ in the rest of the paper. \par
  Observe that the grim reaper is only defined on a bounded interval,
  whereas maximal solutions to \eqref{ode} are defined for all $r>0$ as
  we will see later.
\end{remark}

In order to investigate regularity at the origin, 
we start with a result that allows to exclude points
of non-differentiability. It is also valid for functions defined on
$(-\epsilon,\epsilon)$, $\epsilon>0$. 
\begin{proposition}
  \label{remove pt diff prop}
  Let $u,v\colon(-1,1)\to\R$ be continuous. Assume that \[u\in
  C^1((-1,0)\cup(0,1))\] and $u'(x)=v(x)$ for all
  $x\in(-1,0)\cup(0,1)$. Then $u\in C^1((-1,1))$ and $u'=v$.
\end{proposition}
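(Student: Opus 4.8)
The plan is to show that $u$ is differentiable at $0$ with $u'(0)=v(0)$, since differentiability and the stated derivative are already known on $(-1,0)\cup(0,1)$, and then continuity of $u'$ on all of $(-1,1)$ follows immediately from the continuity of $v$. The natural tool is the mean value theorem applied on small intervals with one endpoint at $0$.

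First I would fix $x\in(0,1)$ and apply the mean value theorem to $u$ on the interval $[0,x]$ (using continuity of $u$ on the closed interval and differentiability on the open interval): there is $\xi_x\in(0,x)$ with $\frac{u(x)-u(0)}{x}=u'(\xi_x)=v(\xi_x)$. As $x\downarrow 0$ we have $\xi_x\to 0$, so by continuity of $v$ at $0$ the right-hand side tends to $v(0)$; hence the right-hand difference quotient of $u$ at $0$ equals $v(0)$. The same argument on intervals $[x,0]$ for $x\in(-1,0)$ gives the left-hand difference quotient, so $u'(0)$ exists and equals $v(0)$. Therefore $u\in C^1((-1,1))$ with $u'=v$ everywhere, because $v$ is continuous on $(-1,1)$ by hypothesis.

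There is no serious obstacle here; the only point requiring a little care is the justification that the mean value theorem applies, i.e.\ that $u$ is continuous up to the endpoint $0$ (given) and differentiable on the open interval not containing $0$ (given), so that $\xi_x$ genuinely lies in $(0,x)$ and hence tends to $0$. An alternative, essentially equivalent, route would be to write $u(x)-u(0)=\int_0^x v(t)\,dt$ for $x\neq 0$ — valid since $u$ is a $C^1$ antiderivative of the continuous function $v$ on each side of $0$ and both sides extend continuously to $0$ — and then differentiate using the fundamental theorem of calculus and continuity of $v$ at $0$; I would present the mean value theorem version as it avoids invoking integration.
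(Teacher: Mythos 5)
Your proof is correct, and it takes a genuinely different route from the paper's. You apply the mean value theorem on $[0,x]$ and $[x,0]$ to identify the one-sided difference quotients of $u$ at $0$ with values $v(\xi_x)$, and then let $\xi_x\to0$ using continuity of $v$; this directly produces $u'(0)=v(0)$, after which continuity of $u'$ is immediate from $u'=v$ and the continuity of $v$. The paper instead constructs the global antiderivative $U(x):=u(0)+\int_0^x v(t)\,dt$, observes that $U-u$ is constant on each of $(0,1)$ and $(-1,0)$, and kills the two constants via the estimate $|c_+|\le|u(0)-u(x)|+|x|\cdot\sup|v|$ as $x\downarrow0$; this identifies $u$ with $U$ and transfers all regularity from $U$. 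Your argument is shorter and avoids integration entirely, at the cost of relying on the equality form of the mean value theorem, which is specific to real-valued functions; the paper's antiderivative argument would carry over verbatim to $\R^N$-valued $u$ (a setting that does appear elsewhere in the paper, e.g.\ in Theorem \ref{1r sing ex thm}), though for the present proposition, which is only applied to real-valued restrictions $t\mapsto u(te_i)$, both routes are equally adequate. Your hygiene remarks — that the MVT hypotheses (continuity up to the endpoint $0$, differentiability on the open interval) are exactly what is given — are the right points to check, and your sketched alternative via $u(x)-u(0)=\int_0^x v$ is essentially the paper's proof.
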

\begin{proof}
  Define $U\colon(-1,1)\to\R$ by \[U(x):= u(0) +\int\limits_0^x
    v(t)\,dt.\] We obtain that $U\in C^1((-1,1))$, $U'(x)=v(x)$ for
  all $x\in(-1,1)$ and therefore $U'(x)=u'(x)$ for all
  $x\in(-1,0)\cup(0,1)$. Hence, there exist constants $c_\pm$ such
  that $U(x)=u(x)+c_+$ for all $x\in(0,1)$ and $U(x)=u(x)+c_-$ for all
  $x\in(-1,0)$. We wish to show that $c_+=0=c_-$. For $x>0$, we
  obtain \[c_+ =U(x)-u(x) =u(0)+\int\limits_0^x v(t)\,dt -u(x)\] and
  deduce for $0<x\le\frac12$ \[|c_+|\le |u(0)-u(x)|
    +|x|\cdot\sup\limits_{\left[0,\frac12\right]} |v|.\] As $u$ and
  $v$ are continuous, the right-hand side of this inequality converges
  to zero as $x\downarrow0$ and we obtain that $c_+=0$. Similarly, we
  deduce that $c_-=0$. According to our definition, we see that
  $U(0)=u(0)$. Hence, we have shown that $U=u$ on $(-1,1)$. Therefore,
  our claims follow from the respective properties of $U$. 
\end{proof}

The following proposition characterises the $C^2$-regularity of
rotationally symmetric functions at the origin. 
\begin{proposition}
  \label{reg orig prop}
  Let $n\in\N$ with $n\ge2$ and $R>0$. 
  Let \[\Phi\in C^2((0,R))\cap C^0([0,R)).\] Then
  \begin{align*}
    u\colon B_R(0)\to&\,\R,\\
    u(x):=&\,\Phi(|x|)
  \end{align*}
  is of class $C^2$ if and only if the
  limits \[\lim\limits_{r\downarrow0}\frac{\Phi'(r)}r
    \quad\text{and}\quad \lim\limits_{r\downarrow0}\Phi''(r)\] exist
  and coincide. 
\end{proposition}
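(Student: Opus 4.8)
The plan is to prove both directions by relating the function $u(x)=\Phi(|x|)$ to its second-order Taylor behaviour at the origin, using that $u$ is already $C^2$ away from $0$ (Lemma \ref{ode deriv lem} is not needed here; only elementary calculus is). The computations in the proof of Lemma \ref{ode deriv lem} give, for $x\neq0$,
\[
u_i(x)=\Phi'(|x|)\frac{x_i}{|x|},\qquad
u_{ij}(x)=\Phi''(|x|)\frac{x_ix_j}{|x|^2}+\frac{\Phi'(|x|)}{|x|}\left(\delta_{ij}-\frac{x_ix_j}{|x|^2}\right),
\]
so the candidate values at the origin are $u_i(0)=0$ and, reading off the coefficient of $\delta_{ij}$ and of $x_ix_j/|x|^2$, the Hessian should tend to $\lambda\,\delta_{ij}$ where $\lambda=\lim_{r\downarrow0}\Phi'(r)/r=\lim_{r\downarrow0}\Phi''(r)$, provided these limits exist and agree. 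So the natural strategy is: first show $u$ is $C^1$ with $\nabla u(0)=0$, then show each $u_{ij}$ extends continuously to $0$ with value $\lambda\delta_{ij}$, and invoke a ``removable point'' principle to conclude that these continuous extensions really are the derivatives of $u$ at $0$.

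For the direction assuming the two limits exist and coincide (call the common value $\lambda$): I would first note that $\Phi'(r)/r\to\lambda$ forces $\Phi'(r)\to0$, hence $\Phi'$ extends continuously to $[0,R)$ by $\Phi'(0)=0$, and then $u\in C^1(B_R(0))$ with $\nabla u(0)=0$ — this can be seen directly from the formula $u_i(x)=\Phi'(|x|)x_i/|x|$, which is bounded by $|\Phi'(|x|)|\to0$, together with Proposition \ref{remove pt diff prop} applied along lines through the origin (or a direct $\epsilon$-$\delta$ estimate on difference quotients). Next, using the $u_{ij}$ formula above, I estimate
\[
\bigl|u_{ij}(x)-\lambda\delta_{ij}\bigr|
\le\left|\Phi''(|x|)-\lambda\right|\cdot\frac{|x_ix_j|}{|x|^2}
+\left|\frac{\Phi'(|x|)}{|x|}-\lambda\right|\cdot\left|\delta_{ij}-\frac{x_ix_j}{|x|^2}\right|,
\]
and both terms go to $0$ as $x\to0$ because the two difference factors do (the geometric factors being bounded by $1$ and $2$ respectively). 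Hence each $u_{ij}$ extends continuously to all of $B_R(0)$ by the value $\lambda\delta_{ij}$ at the origin. Finally I apply Proposition \ref{remove pt diff prop} — restricting $u_i$ and its partials to each coordinate line through $0$, or to arbitrary lines — to upgrade ``$u_i$ is $C^1$ off the origin with $\partial_j u_i$ extending continuously'' to ``$u_i\in C^1$ through the origin with $\partial_j u_i$ equal to that extension''. Thus $u\in C^2(B_R(0))$.

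For the converse, suppose $u\in C^2(B_R(0))$. Evaluating the known formulas for $x=re_1$ with $r\downarrow0$: from $u_{11}(re_1)=\Phi''(r)$ and continuity of $u_{11}$ at $0$, the limit $\lim_{r\downarrow0}\Phi''(r)$ exists and equals $u_{11}(0)$. From $u_{22}(re_1)=\Phi'(r)/r$ (for $n\ge2$, so that a second coordinate exists) and continuity of $u_{22}$ at $0$, the limit $\lim_{r\downarrow0}\Phi'(r)/r$ exists and equals $u_{22}(0)$. It remains to see these two values coincide: by rotational symmetry of $u$ (or directly, since the Hessian of a $C^2$ function at a point is a genuine symmetric bilinear form and the formula shows it is rotation-equivariant on $B_R(0)\setminus\{0\}$, hence by continuity at $0$ it must be a scalar multiple of the identity at $0$), one gets $u_{11}(0)=u_{22}(0)$; alternatively compute $u_{11}(0)$ and $u_{22}(0)$ as limits along the \emph{same} ray and use that off-diagonal terms like $u_{12}$ vanish. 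Either way the two limits agree.

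The main obstacle is the bookkeeping in the first direction: one must be careful that establishing ``$u_{ij}$ has a continuous extension'' does not by itself prove $u$ is twice differentiable \emph{at} the origin — that final identification is exactly what Proposition \ref{remove pt diff prop} is designed to supply, and the slightly delicate point is to apply it in the right variable (holding the other variables fixed, or slicing along lines) so that its one-dimensional hypothesis is met. A secondary nuisance is handling mixed partials $u_{ij}$ with $i\neq j$, where the geometric factor $x_ix_j/|x|^2$ does not have a limit as $x\to0$ but is nonetheless killed by the vanishing factor $\Phi''(|x|)-\lambda$ (and the companion term by $\Phi'(|x|)/|x|-\lambda$), so the product still tends to $0$.
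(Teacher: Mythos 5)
Your proposal is correct and follows essentially the same route as the paper: both directions rest on the explicit formulas for $u_i$ and $u_{ij}$, the evaluation $u_{22}(re_1)=\Phi'(r)/r$ and $u_{11}(re_1)=\Phi''(r)$, the continuous extension of $u_{ij}$ to the origin with value $\lambda\delta_{ij}$, and Proposition \ref{remove pt diff prop} applied along lines to turn continuous extensions of derivatives into actual derivatives at $0$. The only variation is in the forward direction, where you obtain the equality of the two limits from rotational invariance of the Hessian at the origin ($u_{11}(0)=u_{22}(0)$), whereas the paper gets it from the second difference quotient of the one-dimensional slice $\tilde\Phi(r)=u(re_1)$; both arguments are valid.
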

\begin{proof}
  \neueZeile
  \begin{itemize}
  \item[\Hinrichtung] Let $u\in C^2$ and $x\neq0$. According
    to our calculations in Lemma \ref{ode deriv lem}, the second
    derivatives of $u$ are given by
    \[u_{ij}(x)=\Phi''(|x|)\frac{x_i}{|x|}\frac{x_j}{|x|}
      +\Phi'(|x|)\frac1{|x|} \left(\delta_{ij}
        -\frac{x_ix_j}{|x|^2}\right).\]
    In particular, we obtain for $r\in(0,R)$
    \[u_{22}(re_1) =0+\Phi'(r)\frac1r(1-0) =\Phi'(r)\frac1r.\] By
    assumption, the limit of $u_{22}(re_1)$ as $r\downarrow0$
    exists. This implies that
    $\lim\limits_{r\downarrow0}\Phi'(r)\frac1r$ exists. As a
    consequence, we see that
    \[\lim\limits_{r\downarrow0}\Phi'(r)
      =\lim\limits_{r\downarrow0} \left(r\cdot\frac{\Phi'(r)}r\right)
      =0\cdot\lim\limits_{r\downarrow0}\frac{\Phi'(r)}r =0.\] \par We
    now define $\tilde\Phi(r):= u(re_1)$ and see that $\tilde\Phi$ is
    an extension of $\Phi$ to $(-R,R)$ and of class $C^2$. In
    particular, we get $\tilde\Phi'(0)=0$. According
    to the definition of $\tilde\Phi''$, we deduce that
    \begin{align*}
      \lim\limits_{r\downarrow0}\Phi''(r)
      &=\lim\limits_{r\to0}\tilde\Phi''(r)
      =\tilde\Phi''(0) \umbruch\\
      &=\lim\limits_{\genfrac{}{}{0pt}{}{r\to0}{r\neq0}}
        \frac{\tilde\Phi'(r)-\tilde\Phi'(0)}{r-0}
        =\lim\limits_{r\downarrow0}\frac{\Phi'(r)}r.
    \end{align*}
    Therefore,
    $\lim\limits_{r\downarrow0}\Phi''(r)$ exists and coincides with
    $\lim\limits_{r\downarrow0}\frac{\Phi'(r)}r$.\par
  \item[\Rueckrichtung]
    \begin{itemize}
    \item $u\in C^0$: As $\Phi$ is continuous on $[0,R)$, $u$ is
      continuous at the origin.
    \item $u\in C^1$: According our expressions for first and second
      derivatives in Lemma \ref{ode deriv lem}, it is clear that $u$
      is of class $C^2$ outside the origin. As the limit
      $\lim\limits_{r\downarrow0}\frac{\Phi'(r)}r$ exists, we conclude
      as above that $\lim\limits_{r\downarrow0}\Phi'(r)=0$.
      Therefore, we obtain that
      \[\lim\limits_{\genfrac{}{}{0pt}{}{x\to0}{x\neq0}}u_i(x)
        =\lim\limits_{\genfrac{}{}{0pt}{}{x\to0}{x\neq0}}\Phi'(|x|)
        \frac{x_i}{|x|}=0.\] Now, we apply Proposition \ref{remove pt
        diff prop} on a small interval around the origin, to the
      function $t\mapsto u(te_i)$, $1\le i\le n$, and deduce that it
      is differentiable at $t=0$ and $u_i(0)=0$. This implies that $u$
      is partially differentiable in $B_R(0)$ and the partial
      derivatives are continuous. Hence, we obtain that
      $u\in C^1(B_R(0))$.
    \item $u\in C^2$: We rewrite $u_{ij}(x)$, $x\neq0$, as follows
      \begin{align*}
        u_{ij}(x)=&\,\Phi''(|x|)\frac{x_i}{|x|}\frac{x_j}{|x|}
                    +\Phi'(|x|)\frac1{|x|} \left(\delta_{ij}
                    -\frac{x_i}{|x|}\frac{x_j}{|x|} \right)\umbruch\\ 
        =&\,\left(\Phi''(|x|)-\Phi'(|x|)\frac1{|x|}\right)
           \frac{x_i}{|x|}\frac{x_j}{|x|}
           +\Phi'(|x|)\frac1{|x|}\delta_{ij}
      \end{align*}
      and deduce that
      $\lim\limits_{\genfrac{}{}{0pt}{}{x\to0}{x\neq0}} u_{ij}(x)$
      exists and 
      equals $\lim\limits_{r\downarrow0} \Phi'(r)\frac1r
      \delta_{ij}$. We can now apply the arguments used in the proof
      that $u\in C^1$ to the partial derivatives $u_j$, $1\le j\le n$,
      and deduce that $u\in C^2$. \qedhere
    \end{itemize}
  \end{itemize}
\end{proof}

For later use, it will be convenient to reformulate the regularity
conditions at the origin. 
\begin{corollary}
  \label{reg orig exp cor}
  Let $n$, $R$, $\Phi$ and $U$ be as in Proposition \ref{reg orig
    prop}.
  Let $\psi(r):=\Phi'(e^{-r})$. Then $U$ is of class $C^2$ if
  and only if the limits
  \[\lim\limits_{r\to\infty} e^r\psi(r) \quad\text{and}\quad
    \lim\limits_{r\to\infty} e^r\psi'(r)\] exist and sum
  up to zero.
\end{corollary}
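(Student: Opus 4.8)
The plan is to read the corollary as nothing more than the change of variables $r = -\log s$ applied to Proposition \ref{reg orig prop}. By that proposition, $U$ is of class $C^2$ if and only if $\lim_{s\downarrow0}\Phi'(s)/s$ and $\lim_{s\downarrow0}\Phi''(s)$ both exist and are equal, so it suffices to show that the two conditions imposed on $\psi$ translate exactly into these two conditions on $\Phi$. First I would record that $s\mapsto-\log s$ is a diffeomorphism of $(0,R)$ onto $(-\log R,\infty)$ with inverse $r\mapsto e^{-r}$; hence $\psi(r)=\Phi'(e^{-r})$ is well defined and of class $C^2$ on $(-\log R,\infty)$ (here the hypothesis $\Phi\in C^2$ is used, so that $\psi'$ makes sense), and since $r\mapsto e^{-r}$ is a homeomorphism carrying a punctured neighbourhood of $+\infty$ onto a punctured right-neighbourhood of $0$, a limit as $r\to\infty$ exists if and only if the corresponding limit as $s\downarrow0$ exists.

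Next I would compute $\psi'(r)=-e^{-r}\Phi''(e^{-r})$ and then, writing $s=e^{-r}$ so that $e^{r}=1/s$, obtain the two identities $e^{r}\psi(r)=\Phi'(s)/s$ and $e^{r}\psi'(r)=-\Phi''(s)$. Letting $r\to\infty$, equivalently $s\downarrow0$, this shows that $\lim_{r\to\infty}e^{r}\psi(r)$ exists iff $\lim_{s\downarrow0}\Phi'(s)/s$ exists, and $\lim_{r\to\infty}e^{r}\psi'(r)$ exists iff $\lim_{s\downarrow0}\Phi''(s)$ exists; and when both exist, their sum equals $\lim_{s\downarrow0}\Phi'(s)/s-\lim_{s\downarrow0}\Phi''(s)$. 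Therefore the two $\psi$-limits exist and sum to zero precisely when $\lim_{s\downarrow0}\Phi'(s)/s$ and $\lim_{s\downarrow0}\Phi''(s)$ exist and coincide, which by Proposition \ref{reg orig prop} is exactly the condition that $U$ be of class $C^2$. This proves the equivalence.

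There is essentially no analytic difficulty here; the corollary is a bookkeeping restatement of Proposition \ref{reg orig prop}, and the whole content is the substitution $r=-\log s$ together with one differentiation. The only point deserving a moment's care is to phrase the argument at the level of \emph{existence} of the limits, not merely of their values, which is why I would emphasise that $r\mapsto e^{-r}$ is a continuous bijection between the relevant deleted neighbourhoods; after that, the equality of the transformed limits with $\Phi'(s)/s$ and $-\Phi''(s)$ is immediate and the conclusion follows.
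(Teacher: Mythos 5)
Your proof is correct and follows essentially the same route as the paper: compute $\psi'(r)=-e^{-r}\Phi''(e^{-r})$, observe $e^{r}\psi(r)=\Phi'(s)/s$ and $e^{r}\psi'(r)=-\Phi''(s)$ under $s=e^{-r}$, and reduce to Proposition \ref{reg orig prop}. Your extra remark that $r\mapsto e^{-r}$ is a homeomorphism of the relevant deleted neighbourhoods, so that existence (not just values) of the limits transfers, is a sensible touch that the paper leaves implicit.
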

\begin{proof}
  According to the definition of $\psi$, we obtain
    \[\psi'(r) =-\Phi''(e^{-r})e^{-r}.\] Hence, we can rewrite the
    limits as
    \begin{align*}
      \lim\limits_{r\downarrow0}\frac{\Phi'(r)}r
      =&\,\lim\limits_{r\to\infty}\frac{\Phi'(e^{-r})}{e^{-r}}
         =\lim\limits_{r\to\infty} e^r\psi(r)\umbruch
         \intertext{and}
         \lim\limits_{r\downarrow0}\Phi''(r)
         =&\,\lim\limits_{r\to\infty} \Phi''(e^{-r})
            =\lim\limits_{r\to\infty}-e^r\psi'(r). 
    \end{align*}
    Therefore, our claim follows from Proposition \ref{reg orig prop}. 
\end{proof}

\subsection{Existence away from from the origin}

Once existence near the origin is established, it is straight-forward
to prove global existence. 
\begin{lemma}
  \label{existence large r lem}
  Let $n\in\N$ with $n\ge2$, $r_0>0$ and $a\in\R$. Then there exists a
  smooth solution $\phi\colon[r_0,\infty)\to\R$ to \eqref{phi intro
    eq} with $\phi(r_0)=a$.
\end{lemma}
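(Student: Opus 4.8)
The plan is to prove this as a standard application of the Picard–Lindelöf theorem together with a continuation (maximal interval) argument, exploiting that on $[r_0,\infty)$ the right-hand side of \eqref{phi intro eq} has no singularity. Write the equation as $\phi'(r)=F(r,\phi(r))$ with
\[
F(r,p)=\bigl(1+p^2\bigr)\left(1-\frac{n-1}{r}p\right),
\]
which is $C^\infty$, in particular locally Lipschitz in $p$, on $[r_0,\infty)\times\R$. Hence for the initial value $\phi(r_0)=a$ there is a unique maximal solution $\phi$ on some interval $[r_0,R^*)$ with $r_0<R^*\le\infty$, and it is smooth by bootstrapping (each derivative of $\phi$ is expressed through $F$ and lower-order derivatives).

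The substance of the proof is to show $R^*=\infty$, i.e.\ that the solution does not blow up in finite time. First I would establish an a priori upper bound: on the region where $\phi(r)\ge \frac{r}{n-1}$ the factor $1-\frac{n-1}{r}\phi(r)$ is $\le 0$, so $\phi'(r)\le 0$ there; this ``supersolution'' barrier $\phi(r)\le\max\{a,\frac{r}{n-1}\}$ shows $\phi$ cannot run off to $+\infty$ faster than linearly. For the lower bound, note that wherever $\phi\le 0$ we have $1-\frac{n-1}{r}\phi\ge 1$ and $1+\phi^2\ge 1$, so $\phi'\ge 1>0$; thus once $\phi$ becomes negative it is immediately increasing, so $\phi$ is bounded below on any bounded $r$-interval (concretely $\phi(r)\ge\min\{a,0\}$ if one only needs boundedness, or one can be sharper). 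Combining, on any finite interval $[r_0,R]\subset[r_0,R^*)$ the solution stays in a fixed compact set $[\min\{a,0\},\max\{a,\frac{R}{n-1}\}]$.

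Finally I would invoke the standard continuation principle: if $R^*<\infty$ then $|\phi(r)|\to\infty$ as $r\uparrow R^*$, contradicting the uniform bound just obtained on $[r_0,R^*]$. Hence $R^*=\infty$ and $\phi$ is defined and smooth on all of $[r_0,\infty)$. I expect the only mildly delicate point to be making the lower barrier argument clean — one must argue that if $\phi$ dips to a negative value it stays bounded below, which follows from the sign observation above via a comparison/first-time argument (let $r_1$ be the first time $\phi$ reaches a putative lower bound and derive $\phi'(r_1)>0$, a contradiction). Everything else is routine ODE theory, and since the statement only asks for existence on $[r_0,\infty)$ (not regularity at the origin), no further work is needed here.
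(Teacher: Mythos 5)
Your proposal is correct and follows essentially the same route as the paper: Picard--Lindel\"of for local existence, then the sign observations that $\phi\le 0$ forces $\phi'>0$ and $\phi\ge\frac{r}{n-1}$ forces $\phi'\le 0$, yielding the a priori bounds $\min\{0,a\}\le\phi(r)\le\max\bigl\{\frac{r}{n-1},a\bigr\}$ and hence global existence by the continuation principle. No gaps.
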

\begin{proof}
  The Picard-Lindel\"of existence theorem ensures that $\phi$ exists
  on some interval $[r_0,r_0+\epsilon)$, $\epsilon>0$. By the
  characterisation of the maximal existence interval, it suffices to
  show that $|\phi|$ cannot tend to infinity on any finite
  interval. Using the ODE, we see that $\phi(r)<0$ implies
  $\phi'(r)>0$ and $\phi(r)>\frac r{n-1}$ implies
  $\phi'(r)<0$. Therefore, we deduce that 
  \[\min\{0,a\}\le\phi(r)\le\max\left\{\frac r{n-1},a\right\}.\]
  This guarantees longtime existence.
\end{proof}

This Lemma allows to extend a solution defined in a neighbourhood of
the origin: Assume that $\phi$ is defined on $[0,2r_0)$ for some
$r_0>0$. Then, according to Lemma~\ref{existence large r lem}, there
exists a solution $\hat\phi$ on $[r_0,\infty)$ with
$\hat\phi(r_0) =\phi(r_0)$. By uniqueness due to Picard-Lindel\"of,
$\phi$ and $\hat\phi$ coincide on $[r_0,r_0+\epsilon)$ for some
$\epsilon\in(0,r_0)$. Thus $\phi$ can be extended smoothly to a
solution of \eqref{phi intro eq} on $[0,\infty)$.

\section{Existence via weighted spaces}
\label{weighted spaces sec}
We give an existence proof for \eqref{phi intro eq} near $r=0$
based on weighted spaces.
\begin{lemma}
  \label{C0p Banach space lem}
  Let $r_0>0$ and $p>0$. We define for $u\in C^0([0,r_0])$
  \[\Vert u\Vert_{C^0_{-p}([0,r_0])} \equiv \Vert
    u\Vert_{C^0_{-p}}:=\sup\limits_{r_0\ge r>0}
    r^{-p}|u(r)|\in[0,\infty]\] and the space
  \[C^0_{-p}([0,r_0]) :=\left\{u\in C^0([0,r_0])\colon \Vert
      u\Vert_{C^0_{-p}}<\infty\right\}.\] Then
  $\Vert\cdot\Vert_{C^0_{-p}}$ is a norm on $C^0_{-p}([0,r_0])$ and
  $C^0_{-p}([0,r_0])$ equipped with the norm
  $\Vert\cdot\Vert_{C^0_{-p}}$ is a Banach space.
\end{lemma}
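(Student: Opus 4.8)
The plan is to verify the norm axioms directly and then to deduce completeness by comparing the weighted norm with the ordinary supremum norm. The single point that makes everything work is that, since $p>0$ and the interval is bounded, the weighted norm \emph{dominates} the sup norm.

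First I would check that $C^0_{-p}([0,r_0])$ is a linear subspace of $C^0([0,r_0])$ and that $\Vert\cdot\Vert_{C^0_{-p}}$ is a norm. Homogeneity is immediate from $r^{-p}|\lambda u(r)|=|\lambda|\,r^{-p}|u(r)|$, and the triangle inequality follows from $r^{-p}|u(r)+v(r)|\le r^{-p}|u(r)|+r^{-p}|v(r)|\le\Vert u\Vert_{C^0_{-p}}+\Vert v\Vert_{C^0_{-p}}$ upon taking the supremum over $r\in(0,r_0]$; in particular the right-hand side is finite when $u,v\in C^0_{-p}$, so the space is closed under addition, and it is trivially closed under scalar multiplication. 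For definiteness, $\Vert u\Vert_{C^0_{-p}}=0$ forces $u(r)=0$ for every $r\in(0,r_0]$, and then $u(0)=0$ by continuity, so $u\equiv0$.

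The key observation for completeness is that, since $p>0$ and $0<r\le r_0$, we have $r^p\le r_0^p$, hence $|u(r)|=r^p\bigl(r^{-p}|u(r)|\bigr)\le r_0^p\,\Vert u\Vert_{C^0_{-p}}$ for all $r\in(0,r_0]$ and, by continuity, also at $r=0$. Thus $\Vert u\Vert_{C^0([0,r_0])}\le r_0^p\,\Vert u\Vert_{C^0_{-p}}$. Consequently, a Cauchy sequence $(u_k)$ in $C^0_{-p}([0,r_0])$ is Cauchy in the Banach space $C^0([0,r_0])$ and therefore converges uniformly to some $u\in C^0([0,r_0])$.

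It then remains to show that $u\in C^0_{-p}$ and that $u_k\to u$ in $\Vert\cdot\Vert_{C^0_{-p}}$. Given $\epsilon>0$, choose $N$ with $\Vert u_k-u_l\Vert_{C^0_{-p}}\le\epsilon$ for all $k,l\ge N$; then $r^{-p}|u_k(r)-u_l(r)|\le\epsilon$ for every fixed $r\in(0,r_0]$, and letting $l\to\infty$ (using the pointwise convergence $u_l(r)\to u(r)$) gives $r^{-p}|u_k(r)-u(r)|\le\epsilon$ for all such $r$, i.e. $\Vert u_k-u\Vert_{C^0_{-p}}\le\epsilon$ for $k\ge N$. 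In particular $u-u_N\in C^0_{-p}$, so $u=(u-u_N)+u_N\in C^0_{-p}$, and $u_k\to u$ in $C^0_{-p}([0,r_0])$. I do not expect any genuine obstacle here; the only thing to be careful about is the direction of the estimate $r^p\le r_0^p$, which is precisely where the sign of $p$ and the boundedness of the interval enter — without it (for instance on an unbounded interval, or for $p<0$) the weighted norm would no longer control the sup norm and this argument would break down.
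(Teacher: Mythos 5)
Your proof is correct, and it takes a genuinely different and more direct route to completeness than the paper does. The paper only exploits the two-sided equivalence of $\Vert\cdot\Vert_{C^0_{-p}([\epsilon,r_0])}$ with the sup norm on the truncated intervals $[\epsilon,r_0]$; it therefore has to build the limit function piecewise as $\epsilon\downarrow 0$, define $u(0):=0$ by hand, and verify continuity at the origin separately via the uniform bound $|u_n(r)|\le C\,r^p$ coming from boundedness of the Cauchy sequence. Your observation that the \emph{one-sided} estimate $\Vert u\Vert_{C^0([0,r_0])}\le r_0^p\,\Vert u\Vert_{C^0_{-p}}$ already holds on the full interval (precisely because $p>0$ and $r\le r_0$) short-circuits all of this: the Cauchy sequence converges uniformly on all of $[0,r_0]$ at once, so the limit is continuous up to the origin for free, and the remaining step — passing to the limit $l\to\infty$ in $r^{-p}|u_k(r)-u_l(r)|\le\epsilon$ pointwise and then taking the supremum — is the same in both arguments. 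What your approach buys is brevity and the elimination of the gluing construction; what the paper's approach buys is essentially nothing extra here, except that its local-equivalence reasoning would survive in settings where the global one-sided domination fails (e.g.\ an unbounded interval or $p<0$), exactly the caveat you correctly identify at the end. Your verification of the norm axioms, including definiteness via $u(0)=0$ by continuity, matches the paper's.
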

Henceforth, we will always use the norm $\Vert\cdot\Vert_{C^0_{-p}}$
on $C^0_{-p}([0,r_0])$.
\begin{proof}
  \neueZeile
  \begin{enumerate}[label=(\roman*)]
  \item \emph{Norm:} Functions $u\in C^0_{-p}([0,r_0])$ fulfil
    $|u(r)|\le C\cdot r^p$ for all $r\in[0,r_0]$ for some $C\ge0$. In
    particular, we obtain that $u(0)=0$ and that
    $\Vert\cdot\Vert_{C^0_{-p}}$ is positive definite. It is now easy
    to see that $\Vert\cdot\Vert_{C^0_{-p}}$ is a norm.
  \item \emph{Banach space:} Let
    $(u_n)_{n\in\N}\subset C^0_{-p}([0,r_0])$ be a Cauchy sequence. We
    wish to show that it converges to some element in
    $C^0_{-p}([0,r_0])$. \par Let $\epsilon\in(0,r_0]$. Then the norm
    \[\Vert u\Vert_{C^0_{-p}([\epsilon,r_0])}
      :=\sup\limits_{r\in[\epsilon,r_0]} r^{-p}|u(r)|\] on
    $C^0([\epsilon,r_0])$ is equivalent to the
    $\Vert\cdot\Vert_{C^0}$-norm, namely
    \[r_0^{-p}\cdot\Vert u\Vert_{C^0} \le\Vert
      u\Vert_{C^0_{-p}([\epsilon,r_0])}
      =\sup\limits_{r\in[\epsilon,r_0]} r^{-p}|u(r)|\le
      \epsilon^{-p}\cdot\Vert u\Vert_{C^0}\] for all
    $u\in C^0([\epsilon,r_0])$. For any $\epsilon\in(0,r_0]$, the
    sequence $(u_n|_{[\epsilon,r_0]})_{n\in\N}$ is a Cauchy sequence
    with respect to $\Vert\cdot\Vert_{C^0_{-p}([\epsilon,r_0])}$ and
    hence also in $C^0([\epsilon,r_0])$. Since $C^0$ is a Banach
    space, the sequence $\left(u_n|_{[\epsilon,r_0]}\right)_{n\in\N}$
    has a continuous limit $u_\epsilon\colon[\epsilon,r_0]\to\R$ with
    $u_n|_{[\epsilon,r_0]}\to u_\epsilon$ uniformly as
    $n\to\infty$. It is easy to see that
    $u_{\epsilon_1}|_{[\epsilon_2,r_0]} = u_{\epsilon_2}$ for any
    $0<\epsilon_1<\epsilon_2<r_0$. Therefore, we can define \[u(r):=
      \begin{cases}
        u_\epsilon(r), &r\in (0,r_0] \text{ for some }0<\epsilon\le
        r,\\ 
        0, &r=0. 
      \end{cases}\] As $(u_n)_{n\in\N}$ is a Cauchy sequence and hence
    bounded, there exists $C\ge0$ such that $|u_n(r)|\le C\cdot r^p$
    for all $r\in[0,r_0]$ and all $n\in\N$. The pointwise convergence
    $u_n\to u$ as $n\to\infty$ ensures that $|u(r)|\le C\cdot r^p$ for
    all $r\in[0,r_0]$.  Therefore, the function $u$ is continuous at
    the origin. The continuity of $u$ at $r>0$ is a direct consequence
    of the uniform convergence. Thus, we see that $u\in C^0_{-p}$.
    Finally, letting $m\to\infty$ in the Cauchy condition
    \[\forall\,\epsilon>0\,\,\exists\,N\in\N\,\,\forall\,n,m\ge
      N\,\,\forall\,r\in[0,r_0]: |u_n(r)-u_m(r)|\le\epsilon\cdot r^p\]
    yields the same condition with
    $|u_n(r)-u(r)|\le\epsilon\cdot r^p$. Therefore, we deduce that
    $u_n\to u$ in $\Vert\cdot\Vert_{C^0_{-p}}$ and conclude that
    $C^0_{-p}([0,r_0])$ is a Banach space. \qedhere
  \end{enumerate}
\end{proof}

\begin{remark}
  Lemma \ref{C0p Banach space lem} extends to the situation
  $u\colon[0,r_0]\to E$ for a Banach space~$E$.
\end{remark}

We will need two simple lemmata that imply in different settings that
$r\mapsto\frac{u(r)}r$ is continuous.
\begin{lemma}
  \label{ur cont lem}
  Let $p>1$ and $u\in C^0_{-p}([0,T])$, $T>0$. Then the function
  \[r\mapsto
    \begin{cases}
      \frac{u(r)}r,& 0<r\le T,\\
      0,& r=0,
    \end{cases}\] denoted by $r\mapsto\frac{u(r)}r$, $r\in[0,T]$, in
  the following, is continuous,
  \[\left(r\mapsto\frac{u(r)}r\right)\in C^0([0,T]).\]
\end{lemma}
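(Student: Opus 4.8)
The plan is to check continuity separately on the punctured interval $(0,T]$ and at the origin. On $(0,T]$, the function $r\mapsto\frac{u(r)}r$ is the quotient of the continuous function $u$ by the continuous, nowhere-vanishing function $r\mapsto r$, hence continuous there by the usual algebra of continuous functions; no hypothesis on $p$ beyond $p>0$ is needed for this part.

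The only real point is continuity at $r=0$. Here I would use the defining property of $C^0_{-p}([0,T])$: since $\Vert u\Vert_{C^0_{-p}}<\infty$, setting $C:=\Vert u\Vert_{C^0_{-p}}$ gives $|u(r)|\le C\,r^p$ for all $r\in[0,T]$. Therefore, for $0<r\le T$,
\[
\left|\frac{u(r)}r\right|\le C\,r^{p-1},
\]
and because $p>1$ the exponent $p-1$ is positive, so the right-hand side tends to $0$ as $r\downarrow0$. Since the value assigned at $r=0$ is $0$, this shows $\lim_{r\downarrow0}\frac{u(r)}r=0$, i.e. the function is continuous at the origin. Combining this with the first paragraph yields continuity on all of $[0,T]$, which is the claim.

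I do not anticipate any genuine obstacle; the hypothesis $p>1$ (rather than merely $p>0$, as in Lemma~\ref{C0p Banach space lem}) is exactly what makes $r^{p-1}\to0$, and that is the whole content of the estimate. The only thing to be slightly careful about is that the bound $|u(r)|\le C r^p$ holds up to and including $r=0$, which follows from the definition of the norm together with continuity of $u$ at $0$ (equivalently, $u(0)=0$), as already observed in the proof of Lemma~\ref{C0p Banach space lem}.
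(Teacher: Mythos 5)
Your proof is correct and takes essentially the same route as the paper: the paper's entire argument is the estimate $\left|\frac{u(r)}r\right|\le r^{p-1}\cdot\Vert u\Vert_{C^0_{-p}([0,T])}$ together with $p-1>0$, which is exactly your key step. You merely spell out the routine continuity on $(0,T]$ and the fact that $|u(r)|\le Cr^p$ follows from the definition of the norm, both of which the paper leaves implicit.
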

\begin{proof}
  Consider \[\left|\frac{u(r)}r\right|\le r^{p-1} \cdot \Vert
    u\Vert_{C^0_{-p}([0,T])}\] and use that $p-1>0$. 
\end{proof}

\begin{lemma}
  \label{ur cont lem 2}
  Let $T>0$ and $u\in C^1([0,T])$ with $u(0)=0$. Then the function
  \[r\mapsto
    \begin{cases}
      \frac{u(r)}r,&0<r\le T,\\
      u'(0),&r=0,      
    \end{cases}
  \]
  denoted by $r\mapsto\frac{u(r)}r$, $r\in[0,T]$, in the following, is
  continuous, \[\left(r\mapsto\frac{u(r)}r\right)\in C^0([0,T]).\]
\end{lemma}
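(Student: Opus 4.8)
The plan is to treat the two regions $r>0$ and $r=0$ separately, since on $(0,T]$ the claimed function is a quotient of the $C^1$ function $u$ by the nonvanishing continuous function $r\mapsto r$, hence continuous there without further argument. All the content is therefore in establishing continuity at the origin, i.e.\ that $\frac{u(r)}{r}\to u'(0)$ as $r\downarrow0$.

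For that, I would use the hypothesis $u(0)=0$ to write, for $0<r\le T$,
\[
\frac{u(r)}{r}=\frac{u(r)-u(0)}{r-0},
\]
and then invoke the mean value theorem: for each such $r$ there is some $\xi_r\in(0,r)$ with $\frac{u(r)-u(0)}{r-0}=u'(\xi_r)$. Since $0<\xi_r<r$, we have $\xi_r\to0$ as $r\downarrow0$, and continuity of $u'$ at $0$ (which is part of $u\in C^1([0,T])$) gives $u'(\xi_r)\to u'(0)$. Hence $\lim_{r\downarrow0}\frac{u(r)}{r}=u'(0)$, which matches the value assigned at $r=0$, so the function is continuous at the origin.

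An equally short alternative, should one prefer to avoid choosing the points $\xi_r$, is to write $\frac{u(r)}{r}=\frac1r\int_0^r u'(t)\,dt=\int_0^1 u'(sr)\,ds$ and pass to the limit using uniform continuity of $u'$ on $[0,T]$ (or dominated convergence); the integral then converges to $\int_0^1 u'(0)\,ds=u'(0)$. Either way, combining continuity on $(0,T]$ with continuity at $0$ yields the claim. There is no real obstacle here; the only point worth stating carefully is that the value $u'(0)$ prescribed at $r=0$ is exactly the limit produced by the mean value theorem, so that the pieced-together function is genuinely continuous rather than merely having a removable discontinuity.
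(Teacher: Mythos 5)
Your proof is correct, and your primary argument takes a genuinely different (and slightly more elementary) route than the paper's. The paper is careful about one point: under its reading of $u\in C^1([0,T])$, the function $u$ is only assumed differentiable on the open interval $(0,T)$, with $u'$ extending continuously to $[0,T]$ and $u'(0)$ denoting the extension's value; so one may not simply say that the difference quotient $\frac{u(r)-u(0)}{r-0}$ tends to $u'(0)$ by the definition of the derivative at $0$. The paper circumvents this by proving $\frac{u(r)}{r}=\frac1r\int_0^r u'(\tau)\,d\tau$ via the fundamental theorem of calculus on $[s,r]$ and letting $s\downarrow0$, after which continuity up to $r=0$ with limit $u'(0)$ is clear. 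Your mean value theorem argument avoids the same pitfall for a different reason: the MVT only requires continuity on $[0,r]$ and differentiability on $(0,r)$, so it produces $\xi_r\in(0,r)$ with $\frac{u(r)}{r}=u'(\xi_r)$, and the continuous extension of $u'$ then gives $u'(\xi_r)\to u'(0)$. This is fully compatible with the paper's convention and needs no integration at all. Your stated alternative, $\frac{u(r)}{r}=\frac1r\int_0^r u'(t)\,dt=\int_0^1 u'(sr)\,ds$, is essentially the paper's proof (with the mild caveat that the identity $u(r)=\int_0^r u'$ itself deserves the small limiting argument $s\downarrow0$ that the paper spells out, since $u'$ is a priori only defined on the open interval). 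Either way, you correctly identify that the whole content of the lemma is the match between the prescribed value $u'(0)$ and the limit from the right.
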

\begin{proof}
  It is tempting to use
  \[\frac{u(r)}r =\frac{u(r)-u(0)}{r-0}
    \underset{r\downarrow0}\longrightarrow u'(0),\] but, according to
  the definition of $C^1([0,T])$, $u$ is only differentiable in
  $(0,T)$ and the derivative can be extended continuously to
  $[0,T]$. Therefore, we will avoid using the difference quotient at
  the origin. \par Let $r>0$ and $0<s<r$. Then the fundamental theorem
  of calculus implies that
  \[\frac{u(r)}r\underset{s\downarrow0}{\longleftarrow}
    \frac{u(r)-u(s)}{r-s} =\frac1{r-s}\int\limits_s^r u'(\tau)\,d\tau
    \underset{s\downarrow0}{\longrightarrow} \frac1r\int\limits_0^r
    u'(\tau)\,d\tau.\] As limits are unique, we infer
  that \[\frac{u(r)}r =\frac1r\int\limits_0^r u'(\tau)\,d\tau.\] The
  right-hand side is continuous up to $r=0$ with limit $u'(0)$ at
  $r=0$ as claimed.
\end{proof}

The following theorem is a variant of the Picard-Lindel\"of Theorem
for differential equations of the form $\dot u(r)
=f\left(r,\frac{u(r)}r\right)$. Note that this includes right-hand
sides $f$ that depend on $u(r)=r\cdot\frac{u(r)}r$. Furthermore, we
wish to point out that in general $h,u\not\in C^0_{-p}([0,S])$. 

\begin{theorem}
  \label{1r sing ex thm}
  Let $N\in\N_{\ge1}$, $S\in(0,1]$, $R>0$ and $L\ge0$. Let
  $f\colon[0,S]\times \overline{B_{2R}(0)}\to\R^N$ with
  $B_{2R}(0)\subset\R^N$ be continuous and uniformly Lipschitz
  continuous with respect to the second argument, more precisely,
  assume that $|f(s,a)-f(s,b)| \le L\cdot|a-b|$ for all $s\in[0,S]$
  and all $a,b\in\overline{B_{2R}(0)}$. Let $p>\max\{L,1\}$ and let
  $h\in C^1\left([0,S],\R^N\right)$ be a function such that
  $\frac{h(s)}s\in\overline{B_R(0)}$ for all $s\in(0,S]$ and hence in
  particular $h(s)\in\overline{B_R(0)}$ for all $s\in[0,S]$ and $h(0)=0$.
  Assume that $h$ is an approximate solution of
  $u'(r)=f\left(r,\frac{u(r)}r\right)$ in the sense that
  \[\left\Vert r\mapsto\left( h(r)-\int\limits_0^r
        f\left(s,\frac{h(s)}s\right)\,ds\right)
    \right\Vert_{C^0_{-p}} \le \frac{p-L}p\cdot R. \] Then the
  differential equation
  \[u'(r) =f\left(r,\frac{u(r)}r\right),\quad r\in(0,S],\] has a
  unique solution $u$ such that $\Vert u-h\Vert_{C^0_{-p}}\le R$. \par
  Moreover, $u\in C^1\left([0,S],\R^N\right)$ and $u(0)=0$.
\end{theorem}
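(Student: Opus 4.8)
The plan is to mimic the classical Picard--Lindel\"of argument, but carried out in the Banach space $C^0_{-p}([0,S],\R^N)$ from Lemma~\ref{C0p Banach space lem} rather than in $C^0$, so that the weight $r^{-p}$ absorbs the singular factor $\tfrac1r$ coming from the argument $\tfrac{u(r)}r$. First I would set up the closed ball $\overline{B_R^{C^0_{-p}}(h)} = \{u\in C^0_{-p}([0,S],\R^N): \Vert u-h\Vert_{C^0_{-p}}\le R\}$, which is a complete metric space, and define on it the map
\[
 (\Psi u)(r) := \int\limits_0^r f\left(s,\frac{u(s)}s\right)\,ds .
\]
For this to make sense I need that $\frac{u(s)}s\in\overline{B_{2R}(0)}$ whenever $\Vert u-h\Vert_{C^0_{-p}}\le R$: indeed $|u(s)-h(s)|\le R\,s^p\le R\,s$ since $p\ge1$ and $S\le1$, so $\left|\frac{u(s)}s\right|\le\left|\frac{h(s)}s\right|+R\le 2R$, landing in the domain of $f$. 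A fixed point of $\Psi$ is exactly a solution of the integral equation $u(r)=\int_0^r f(s,\tfrac{u(s)}s)\,ds$, which by the fundamental theorem of calculus is equivalent to $u'(r)=f(r,\tfrac{u(r)}r)$ on $(0,S]$ together with $u(0)=0$.

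Next I would verify the two hypotheses of Banach's fixed point theorem. For the self-map property, write $\Psi u - h = (\Psi u - \Psi h) + (\Psi h - h)$; the second term has $C^0_{-p}$-norm at most $\tfrac{p-L}p R$ by the approximate-solution hypothesis, and the first is controlled by the contraction estimate below applied with one argument equal to $h$, giving $\Vert\Psi u-\Psi h\Vert_{C^0_{-p}}\le \tfrac Lp\,\Vert u-h\Vert_{C^0_{-p}}\le \tfrac Lp R$; adding, $\Vert\Psi u-h\Vert_{C^0_{-p}}\le R$. The contraction estimate is the heart of the matter: for $u,v$ in the ball and $r\in(0,S]$,
\[
 |(\Psi u)(r)-(\Psi v)(r)| \le \int\limits_0^r L\left|\frac{u(s)}s-\frac{v(s)}s\right|ds
 \le L\,\Vert u-v\Vert_{C^0_{-p}}\int\limits_0^r s^{p-1}\,ds
 = \frac{L}{p}\,\Vert u-v\Vert_{C^0_{-p}}\,r^p,
\]
where I used $|u(s)-v(s)|\le \Vert u-v\Vert_{C^0_{-p}}\,s^p$. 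Dividing by $r^p$ and taking the supremum yields $\Vert\Psi u-\Psi v\Vert_{C^0_{-p}}\le \tfrac Lp\Vert u-v\Vert_{C^0_{-p}}$, and $\tfrac Lp<1$ because $p>L$. Banach's fixed point theorem then gives a unique fixed point $u$ in the ball, i.e.\ the asserted unique solution with $\Vert u-h\Vert_{C^0_{-p}}\le R$.

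Finally I would establish the regularity claims. From $u=\Psi u$ and $u\in C^0_{-p}$ we have $u(0)=0$. For $r>0$, the integrand $s\mapsto f(s,\tfrac{u(s)}s)$ is continuous on $(0,S]$ and bounded near $0$ (since $f$ is continuous on a compact set and $\tfrac{u(s)}s$ stays in $\overline{B_{2R}(0)}$), so $u$ is differentiable on $(0,S]$ with $u'(r)=f(r,\tfrac{u(r)}r)$, and this derivative extends continuously to $r=0$: indeed $\tfrac{u(s)}s = \tfrac{h(s)}s + \tfrac{u(s)-h(s)}s$, and the first summand has a limit as $s\downarrow0$ by $h\in C^1$ and Lemma~\ref{ur cont lem 2}, while the second tends to $0$ by Lemma~\ref{ur cont lem} applied to $u-h\in C^0_{-p}$ with $p>1$; hence $f(s,\tfrac{u(s)}s)$ has a limit as $s\downarrow0$, so $u'$ extends continuously and $u\in C^1([0,S],\R^N)$. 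The main obstacle is really just bookkeeping: confirming at each step that the arguments $\tfrac{u(s)}s$ and $\tfrac{h(s)}s$ remain inside $\overline{B_{2R}(0)}$ so that the Lipschitz bound on $f$ applies, and keeping straight that the natural norm here is $\Vert\cdot\Vert_{C^0_{-p}}$ rather than $\Vert\cdot\Vert_{C^0}$ — once that is in place the contraction factor $L/p$ falls out of the elementary integral $\int_0^r s^{p-1}\,ds=r^p/p$ exactly as in the classical proof.
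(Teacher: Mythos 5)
Your proposal is correct and follows essentially the same route as the paper: a Banach fixed point argument in the weighted space with contraction constant $L/p$, the self-map property obtained from the approximate-solution hypothesis via the triangle inequality, and $C^1$-regularity up to $r=0$ from Lemmata \ref{ur cont lem} and \ref{ur cont lem 2}. The only (cosmetic) difference is that you work in the translated ball $\{u:\Vert u-h\Vert_{C^0_{-p}}\le R\}$, which is complete because it is isometric to $\overline{B_R(0)}\subset C^0_{-p}$ under $u\mapsto u-h$, whereas the paper performs this shift by $h$ explicitly and iterates on $\phi=u-h$.
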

\begin{proof}
  \neueZeile
  \begin{enumerate}[label=(\roman*)]
  \item We proceed similarly as in the proof of the Picard-Lindel\"of
    short time existence result and consider the integral equation
    \[u(r) =\int\limits_0^r f\left(s, \frac{u(s)}s\right)\,ds.\]
    In contrast to the situation of the Picard-Lindel\"of Theorem, it
    is not obvious that this integral equation is equivalent to the
    differential equation. Here, our strategy is to solve the integral
    equation, establish additional regularity of the solution and
    finally infer that $u$ also solves the differential equation.
  \item We define
    \[\mathcal M:= \overline{B_R(0)}\subset
      C^0_{-p}\left([0,S],\R^N\right)\] as the 
    closed ball around the origin with respect to the
    $C^0_{-p}$-norm. We also define the operator
    \[T\colon \mathcal M \to C^0\left([0,S],\R^N\right)\] for
    $r\in[0,S]$ by
    \[(T(\phi))(r) := \int\limits_0^r
      f\left(s,\frac{(h+\phi)(s)}s\right)\,ds -h(r).\] We wish to
    point out that it is tempting to remove $h$ in the definition of
    $T$. But we have to shift the situation by $h$ as
    $h+\phi\not\in C^0_{-p}\left([0,S],\R^N\right)$ in general. \par
    Our assumptions $\frac{h(s)}s\in\overline{B_R(0)}$ for all $s\in(0,S]$,
    $\phi\in C^0_{-p}$, $p>1$ and $S\in(0,1]$ ensure that the
    integrand is defined. \par
    Lemmata \ref{ur cont lem} and \ref{ur cont lem 2} ensure that
    the integrand is continuous. Hence $T$ maps indeed to
    $C^0\left([0,S],\R^N\right)$, and, by the fundamental theorem of
    calculus, even to $C^1\left([0,S],\R^N\right)$. Therefore, every
    fixed point $\phi$ of $T$ and therefore also a prospective
    solution $u=h+\phi$ have the claimed regularity.
  \item As in the Picard-Lindel\"of Theorem, we wish to apply the
    Banach fixed point theorem. In order to do this, we have to show
    that $T$ is a contraction and that $T(\mathcal M)\subset\mathcal
    M$. 
  \item\label{1r ex thm cont prop} \emph{Contraction:} Let
    $\phi,\psi\in \mathcal M$. We wish to show that there exists some
    constant $c<1$ such that
    \begin{align*}
      \Vert T(\phi)-T(\psi)\Vert_{C^0_{-p}}
      &\le c\cdot \Vert\phi-\psi\Vert_{C^0_{-p}}.
    \end{align*}
    To achieve this, we consider arbitrary $r\in(0,S]$ and obtain
    \begin{align*}
      &\equad r^{-p}\cdot |(T(\phi))(r)-(T(\psi))(r)|\umbruch\\
      &=\frac1{r^p}\cdot\left|\int\limits_0^r
        f\left(s,\frac{(h+\phi)(s)}s\right)\,ds -\int\limits_0^r
        f\left(s,\frac{(h+\psi)(s)}s\right)\,ds\right|\umbruch\\
      &\le\frac1{r^p}\cdot \int\limits_0^r
        \left|f\left(s,\frac{(h+\phi)(s)}s\right)
        -f\left(s,\frac{(h+\psi)(s)}s\right)\right|\,ds\umbruch\\
      &\le\frac1{r^p}\cdot\int\limits_0^r
        L\cdot\frac1s\cdot|(h+\phi)(s) -(h+\psi)(s)|\,ds\umbruch\\
      &=\frac1{r^p}\cdot L\cdot\int\limits_0^r
        s^{p-1}\cdot\frac{|\phi(s)-\psi(s)|}{s^p}\,ds\umbruch\\
      &\le\frac1{r^p}\cdot L\cdot\int\limits_0^r s^{p-1}\,ds\cdot
        \Vert\phi-\psi\Vert_{C^0_{-p}}\umbruch\\
      &=\frac1{r^p}\cdot L\cdot\frac1p\cdot
        r^p\cdot\Vert\phi-\psi\Vert_{C^0_{-p}}\umbruch\\
      &=\frac Lp\cdot\Vert\phi-\psi\Vert_{C^0_{-p}}
    \end{align*}
    with $\frac Lp<1$ as claimed. 
  \item \emph{Self-map:} We consider $\phi\in \mathcal M$. Our aim is
    to verify that $T(\phi)\in\mathcal M$ or, equivalently,
    $\Vert T(\phi)\Vert_{C^0_{-p}}\le R$.  We consider an arbitrary
    $r\in(0,S]$ and estimate
    \begin{align*}
      &\equad r^{-p}\cdot|(T(\phi))(r)|\umbruch\\
      &=\frac1{r^p}\cdot\left| \int\limits_0^r
        f\left(s,\frac{(h+\phi)(s)}s\right)\,ds -h(r)\right|\umbruch\\
      &\le\frac1{r^p}\cdot \left|\int\limits_0^r
        f\left(s,\frac{(h+\phi)(s)}s\right)
        -f\left(s,\frac{h(s)}s\right)\,ds\right|\\
      &\equad\quad+\frac1{r^p}\cdot
        \left|\int\limits_0^r f\left(s,\frac{h(s)}s\right)\,ds
        -h(r)\right|\umbruch\\
      &\le\frac1{r^p}\cdot|(T(\phi))(r) -(T(0))(r)|
        +\frac{p-L}p\cdot R\umbruch\\
      &\le\frac Lp\cdot\Vert\phi-0\Vert_{C^0_{-p}}
        +\frac{p-L}p\cdot R\umbruch\\
      &\le\frac Lp\cdot R+\frac{p-L}p\cdot R =R. 
    \end{align*}
    Here, we have used the estimate from \ref{1r ex thm cont prop}
    and our assumption on the approximate solution $h$. This proves
    that $T(\mathcal M)\subset\mathcal M$.  
  \item Now, the Banach fixed point theorem yields a fixed point
    $T(\phi) =\phi\in\mathcal M$. As discussed above,
    $u:=h+\phi\in C^1\left([0,S],\R^N\right)$, $u(0)=0$ and
    $\left(r\mapsto\frac{u(r)}r\right) \in
    C^0\left([0,S],\R^N\right)$. Therefore, differentiating
    $(T(\phi))(r) =\phi(r)$ with respect to $r$ shows that
    \[u'(r) =h'(r)+\phi'(r) =\frac{d}{dr} \int\limits_0^r
      f\left(s,\frac{(h+\phi)(s)}{s}\right)\,ds
      =f\left(r,\frac{u(r)}r\right)\] for all $r\in[0,S]$ as
    claimed.
  \item \emph{Uniqueness:} Let $u\in C^1$ with
    $\Vert u-h\Vert_{C^0_{-p}}\le R$ be a solution to
    $u'(r)=f\left(r,\frac{u(r)}r\right)$. Then we define $\phi:=u-h$
    and see that $r\mapsto\frac{\phi(r)}r$ and $r\mapsto\frac{h(r)}r$
    are continuous. Integrating the differential equation yields
    \[\phi(r)+h(r) =u(r) =u(r)-u(0) =\int\limits_0^r
      f\left(s,\frac{\phi(s)+h(s)}s\right)\,ds.\]
    Thus $\phi$ is is fixed point for $T$ with
    $\Vert\phi\Vert_{C^0_{-p}}\le R$ and uniqueness follows from the
    Banach fixed point theorem. \qedhere
  \end{enumerate}
\end{proof}

We wish to apply Theorem \ref{1r sing ex thm} to solve \eqref{phi
  intro eq}. This requires approximate solutions. We will first
illustrate how to find those approximate solutions. Then we will prove
that arbitrarily good approximations do indeed exist.
\begin{remark}
  \label{expl asymp rem}
  Assume that $n\ge2$.  We wish to find a power series approximation
  such that \eqref{phi intro eq} is fulfilled for $r\ge0$ close to
  zero to some order. In order to do so, we set
  \begin{align*}
    \phi(r)
    &=a+br+cr^2+dr^3
      \intertext{and obtain}
      \phi'(r)
    &=b+2cr+3dr^2.
  \end{align*}
  Now, we want to choose $a,b,c,d\in\R$ successively such that the
  left-hand side minus the right-hand side of \eqref{phi intro eq},
  \[\phi'(r)-\left(1+\phi^2(r)\right)\cdot
    \left(1-\frac{n-1}r\phi(r)\right),\] 
  multiplied with $r^1$, then with $r^0$, $r^{-1}$, \ldots{} vanishes
  in the limit $r\downarrow0$. \par
  In the first step, we multiply with $r^1$ and obtain
  \begin{align*}
    0&\overset!=\lim\limits_{r\to0}
       r\cdot\left(\phi'(r)-\left(1+\phi^2(r)\right)
       \cdot\left(1-\frac{n-1}r\phi(r)\right)\right)\umbruch\\
     &=\lim\limits_{r\to0}
       \left\{br+2cr^2+3dr^3-\left(1+\phi^2(r)\right)
       \cdot\left(r-r\frac{n-1}r  
       \phi(r)\right)\right\}\umbruch\\
     &=\left(1+a^2\right)(n-1)a
  \end{align*}
  and deduce that $a=0$. \par Next, we multiply with $r^0=1$ and get
  \begin{align*}
    0&\overset!=b-\lim\limits_{r\to0}\left(1+b^2r^2\right)
       \cdot\left(1-\frac{n-1}r br\right)\umbruch\\
    &=b-(1-(n-1)b) =nb-1
  \end{align*}
  and see that $b=\frac1n$. Here, we have already dropped irrelevant
  higher order terms in the first line. Alternatively, it is possible
  to set $c=d=0$ in this step, to determine $b$ from the ansatz
  $\phi(r)=0+br$ and to find $c$ and $d$ later.
  \par Now, we multiply with $\frac1r$, calculate as above
  \begin{align*}
    0&\overset!=\lim\limits_{r\to0}\frac1r
       \left\{\frac1n+2cr-\left(1+\frac{r^2}{n^2}\right)
       \cdot\left(1-\frac{n-1}r\left(\frac1nr
       +cr^2\right)\right)\right\}\umbruch\\
    &=\lim\limits_{r\to0}\left\{\frac1{nr}+2c
      -\frac1r+\frac{n-1}{nr}+(n-1)c\right\}\umbruch\\
    &=(n+1)c
  \end{align*}
  and conclude that $c=0$. \par
  Finally, a similar calculation or the
  \texttt{python}-program using \texttt{Sympy} \cite{sympy}
\begin{verbatim}
from sympy import *
r, n, d = symbols('r n d')
phi = r / n + d*r**3
ode = diff(phi, r) - (1 + phi**2) * (1 - (n-1) / r * phi)
print(limit(ode / r**2, r, 0))
\end{verbatim}
  shows that $d=\frac1{n^3}\frac1{n+2}$. Recall that \texttt{r**3}
  corresponds to $r^3$. \par
  Following this approach further yields the expectation that
  \[\phi(r) \approx \frac{r}{n} +\frac{\left(\frac rn\right)^3}{n+2}
    -\frac{(n - 3)\cdot \left(\frac rn\right)^5}{n^2 + 6n + 8}
    +\frac{\left(n^3 - 6n^2 - 8n + 30\right)\cdot \left(\frac
        rn\right)^7}{n^4 + 14n^3 + 68n^2 + 136n + 96}\]
  for $r\ge0$ near $r=0$.
\end{remark}

As usual, we denote real polynomials in the variable $r$ by $\R[r]$
and define \[\R_0[r]:=\{\phi\in\R[r]\colon \phi(0)=0\}\] to be the
subspace of those polynomials that vanish at the origin.

Near the origin, we can follow this strategy and find approximate
polynomial solutions of arbitrary precision.
\begin{lemma}
  \label{pol approx diff eq lem}
  Let $n\in\N_{\ge2}$. We define the map $G\colon\R_0[r]\to\R[r]$ by
  \[(G(\phi))(r) :=\phi'(r) -\left(1+\phi^2(r)\right)
    \cdot\left(1-\frac{n-1}r\cdot \phi(r)\right).\] Let $k\in\N$. Then
  there exists a real polynomial function $\phi$ of degree at most $k$
  with $\phi(0)=0$ and \[(G(\phi))(r) \in O\left(r^k\right)\] for
  $r\ge0$ near the origin.
\end{lemma}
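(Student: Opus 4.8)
The plan is to build $\phi$ coefficient by coefficient, by induction on $k$, so that adding each new monomial removes one further order from $(G(\phi))(r)$ at the origin. First note that $G$ really does map $\R_0[r]$ into $\R[r]$: if $\phi(0)=0$ then $\phi(r)=r\,\tilde\phi(r)$ for some $\tilde\phi\in\R[r]$, so $\frac{n-1}{r}\phi(r)=(n-1)\tilde\phi(r)\in\R[r]$ and hence $(G(\phi))(r)\in\R[r]$. Also recall that for a polynomial $P$ the statement $P(r)\in O(r^k)$ as $r\downarrow0$ is purely algebraic: it means $r^k\mid P(r)$, i.e.\ the coefficients of $r^0,\dots,r^{k-1}$ in $P$ all vanish. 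For the base case $k=0$ I would take $\phi=0$, for which $(G(\phi))(r)=-1\in O(1)$ as required; this also provides the starting point if $0\notin\N$.

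For the inductive step, assume $\phi_k\in\R_0[r]$ has degree at most $k$ and $(G(\phi_k))(r)\in O(r^k)$, and write $(G(\phi_k))(r)=\gamma\,r^k+O(r^{k+1})$ with $\gamma\in\R$. I would then set $\psi(r):=a\,r^{k+1}$ for an as-yet-undetermined $a\in\R$ and put $\phi_{k+1}:=\phi_k+\psi$, which again lies in $\R_0[r]$ and has degree at most $k+1$. Expanding $G$ at $\phi_k+\psi$ gives
\begin{align*}
  (G(\phi_{k+1}))(r)&=(G(\phi_k))(r)+\psi'(r)+(n-1)\frac{\psi(r)}{r}\bigl(1+\phi_k^2(r)\bigr)\\
  &\quad-\bigl(2\phi_k(r)\psi(r)+\psi^2(r)\bigr)\Bigl(1-\tfrac{n-1}{r}\phi_{k+1}(r)\Bigr).
\end{align*}
Since $\phi_k(0)=0$ we have $\phi_k(r)=O(r)$ and $\phi_k^2(r)=O(r^2)$, while $\psi(r)=a\,r^{k+1}$ and $\frac{\psi(r)}{r}=a\,r^k$; hence the only contribution of order $r^k$ on the right beyond $(G(\phi_k))(r)$ is $\psi'(r)+(n-1)\frac{\psi(r)}{r}=(k+1)a\,r^k+(n-1)a\,r^k=(k+n)a\,r^k$, everything else being $O(r^{k+1})$. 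This yields $(G(\phi_{k+1}))(r)=\bigl(\gamma+(k+n)a\bigr)r^k+O(r^{k+1})$. Because $n\ge2$ and $k\ge0$, the factor $k+n$ is nonzero, so the choice $a:=-\gamma/(k+n)$ makes $(G(\phi_{k+1}))(r)\in O(r^{k+1})$ and closes the induction.

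I expect the only genuine work to be the bookkeeping in the displayed identity, namely checking that the cross terms $2\phi_k\psi$, $\psi^2$ and $\phi_k^2\cdot\frac{n-1}{r}\psi$ are all $O(r^{k+1})$, so that the single adjustable order-$r^k$ contribution comes from $\psi'$ together with the ``$1$''-part of $(1+\phi_k^2)\frac{n-1}{r}\psi$. This is exactly where the hypothesis $\phi(0)=0$ is used, and it is also what produces the divisor $k+n$, matching the formal computation in Remark~\ref{expl asymp rem} (where $k=0$ gave $a_1=\frac1n$, $k=1$ gave $a_2=0$, and so on). An essentially equivalent route would be to solve the coefficient recursion directly: writing $\phi=\sum_{j\ge1}a_jr^j$, the coefficient of $r^m$ in $(G(\phi))(r)$ equals $(m+n)a_{m+1}-\delta_{m0}$ plus a polynomial in $a_1,\dots,a_{m-1}$, so one reads off $a_1=\frac1n$ and then $a_{m+1}$ for $m\ge1$ successively, the denominators $m+n$ never vanishing; truncating after $a_k$ gives the desired polynomial.
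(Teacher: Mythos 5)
Your proposal is correct and follows essentially the same route as the paper: induction on $k$ with base case $\phi=0$, adding a single monomial $ar^{k+1}$ at each step, and observing that the only new order-$r^k$ contribution is $(k+n)a\,r^k$ (coming from $\psi'$ and the $\frac{n-1}{r}\psi$ term), so that $a=-\gamma/(k+n)$ kills the leading coefficient. The bookkeeping in your displayed identity matches the paper's expansion, and the divisor $k+n$ agrees with the paper's choice $a=-b/(n+k)$.
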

\begin{proof}
  We use induction on $k$.
  \begin{enumerate}[label=(\roman*)]
  \item For $k=0$, the only polynomial of degree $0$ with $\phi(0)=0$
    is given by $\phi(r)=0$ for all $r\in\R$. We get
    $(G(\phi))(r) =-1\in O\left(r^0\right)$ as claimed.
  \item Assume now that $k\ge1$ and that $\phi$ is a polynomial with
    $\deg\phi\le k$, $\phi(0)=0$ and
    $(G(\phi))(r)\in O\left(r^k\right)$. Then there exists some $b\in\R$
    such that
    \[\phi'(r) =\left(1+\phi^2(r)\right)\cdot
      \left(1-\frac{n-1}r\cdot\phi(r)\right) +br^k
      +O\left(r^{k+1}\right).\] We
    define \[\psi(r):=\phi(r)+ar^{k+1}\] and wish to show that we can
    choose $a$ such that $(G(\psi))(r) \in
    O\left(r^{k+1}\right)$. Using our assumption on $\phi$, we obtain
    \begin{align*}
      (G(\psi))(r)
      &=\phi'(r) +a(k+1)r^k\\
      &\equad -\left(1+\left(\phi(r)+ar^{k+1}\right)^2\right) \cdot
        \left(1-\frac{n-1}r\cdot
        \left(\phi(r)+ar^{k+1}\right)\right)\umbruch\\  
      &=\left(1+\phi^2(r)\right)\cdot
        \left(1-\frac{n-1}r\cdot\phi(r)\right) +br^k +a(k+1)r^k\\
      &\equad {}-\left(\left(1+\phi^2(r)\right) +\left(2a\phi(r)r^{k+1}
        +a^2r^{2k+2}\right)\right)\\
      &\equad\qquad\cdot
        \left(\left(1-\frac{n-1}r\cdot\phi(r)\right) -(n-1)ar^k\right)\\
      &\equad {}+O\left(r^{k+1}\right)\umbruch\\
      &=br^k + a(k+1)r^k\\
      &\equad{}+\left(1+\phi^2(r)\right)(n-1)ar^k\\
      &\equad{}-\left(2a\phi(r)r^{k+1} +a^2r^{2k+2}\right) \cdot
        \left(1-\frac{n-1}r\cdot\phi(r) -(n-1)ar^k\right)\\
      &\equad{}+O\left(r^{k+1}\right)\umbruch\\
      &=\left[b+a(k+1)+a(n-1)\right]\cdot r^k +O\left(r^{k+1}\right). 
    \end{align*}
    We choose $a=-\frac b{n+k}$ and see that $\psi$ is a polynomial as
    claimed. \qedhere
  \end{enumerate}
\end{proof}

Integrating yields
\begin{corollary}
  \label{approx poly fct cor}
  Let $n\in\N_{\ge2}$. Let $\epsilon>0$ and $p\in\N$ be given. Then
  there exists some real polynomial function $h$ with $h(0)=0$ and
  some $R,S>0$ such that
  \[\left\Vert r\mapsto\left(h(r)-\int\limits_0^r
        \left(1+h^2(s)\right)\cdot
        \left(1-\frac{n-1}s\cdot h(s)\right)\,ds\right)
    \right\Vert_{C^0_{-p}([0,S])}\le\epsilon,\]
  $\left|\frac{h(s)}s\right|\le R$ for all $s\in(0,S]$ 
  and $h\in C^1\left([0,S],\overline{B_R(0)}\right)$. 
\end{corollary}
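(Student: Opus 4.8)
The plan is to let $h$ be the approximate polynomial solution furnished by Lemma \ref{pol approx diff eq lem} applied with $k:=p$, and then to read off the three assertions from the fundamental theorem of calculus together with the smallness of $[0,S]$. No genuinely new idea is needed beyond that lemma; the work is essentially bookkeeping. The key structural point to respect is the order in which the constants are fixed.

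In detail: Lemma \ref{pol approx diff eq lem} (with $k=p$) produces a polynomial $h$ with $h(0)=0$ and $(G(h))(r)\in O(r^p)$ near the origin, where $(G(h))(r)=h'(r)-\left(1+h^2(r)\right)\left(1-\frac{n-1}{r}h(r)\right)$. Since $h(0)=0$, the map $r\mapsto h(r)/r$ is a polynomial, so in particular the integrand $\left(1+h^2(s)\right)\left(1-\frac{n-1}{s}h(s)\right)$ extends continuously across $s=0$, and $G(h)$ is a genuine polynomial; the statement $G(h)\in O(r^p)$ just says that its coefficients of $r^0,\dots,r^{p-1}$ vanish, so $(G(h))(r)=r^p q(r)$ for some $q\in\R[r]$, and there exist $C\ge 0$ and $\delta>0$ with $|(G(h))(r)|\le C\,r^p$ on $[0,\delta]$. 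I would then use $h(0)=0$ and the fundamental theorem of calculus to rewrite
\[
h(r)-\int_0^r\left(1+h^2(s)\right)\left(1-\frac{n-1}{s}h(s)\right)ds=\int_0^r(G(h))(s)\,ds ,
\]
so that for $0<r\le S\le\delta$,
\[
r^{-p}\left|h(r)-\int_0^r\left(1+h^2(s)\right)\left(1-\frac{n-1}{s}h(s)\right)ds\right|\le r^{-p}\int_0^r C\,s^p\,ds=\frac{C}{p+1}\,r\le\frac{C}{p+1}\,S .
\]
Choosing $S\in\big(0,\min\{\delta,1\}\big]$ small enough that $\frac{C}{p+1}\,S\le\epsilon$ then yields the required bound on the $C^0_{-p}([0,S])$-norm.

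With $h$ and $S$ fixed, I would choose $R:=\max\!\big\{1,\ \max_{s\in[0,S]}|h(s)/s|\big\}$, which is finite and positive since $h/r\in\R[r]$ is continuous on the compact set $[0,S]$. By construction $|h(s)/s|\le R$ for all $s\in(0,S]$, and $|h(s)|=s\,|h(s)/s|\le S\cdot R\le R$ for all $s\in[0,S]$ because $S\le 1$, so $h$ maps $[0,S]$ into $\overline{B_R(0)}$; being a polynomial, $h$ is of class $C^1$, hence $h\in C^1\big([0,S],\overline{B_R(0)}\big)$.

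The only points requiring care — and what one might call the main obstacle, though it is a mild one — are (a) arranging the dependencies $k\rightsquigarrow h\rightsquigarrow(C,\delta)\rightsquigarrow S\rightsquigarrow R$ in exactly this order so that the choices are not circular, and (b) the observation that, although $h$ itself does not lie in $C^0_{-p}([0,S])$ (as noted in the remark before Theorem \ref{1r sing ex thm}), the difference $h(\cdot)-\int_0^\cdot(\dots)$ does lie there, precisely because it equals $\int_0^\cdot (G(h))$ and $G(h)$ vanishes to order $p$ at the origin — which is exactly why Lemma \ref{pol approx diff eq lem} was formulated for an arbitrary order $k$.
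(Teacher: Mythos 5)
Your proposal is correct and follows essentially the same route as the paper: apply Lemma \ref{pol approx diff eq lem} with $k=p$, bound $|(G(h))(r)|\le Cr^p$, integrate to get the factor $\frac{C}{p+1}S$, shrink $S\le 1$ accordingly, and then pick $R$ large enough for the polynomial $h(s)/s$ on $[0,S]$. Your explicit identification of the error term with $\int_0^r (G(h))(s)\,ds$ via the fundamental theorem of calculus is exactly what the paper uses implicitly.
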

\begin{proof}
  We apply Lemma \ref{pol approx diff eq lem} with $k=p$ and get a
  polynomial $h:=\phi$ such that
  \[f(r):= h'(r)-\left(1+h^2(r)\right)\cdot \left(1-\frac{n-1}r\cdot
      h(r)\right)\in O\left(r^p\right).\]
  We obtain $|f(r)|\le Cr^p$ for some $C\in\R$. We integrate this
  inequality and obtain
  \begin{align*}
    \frac1{r^p}\left|\int\limits_0^r f(s)\,ds\right|\le \frac1{r^p}
    \int\limits_0^r Cs^p\,ds =\frac1{r^p}\frac C{p+1} r^{p+1} =\frac
    C{p+1}r\le \frac C{p+1}S.  
  \end{align*}
  For $S\le\frac{\epsilon (p+1)}C$, we deduce the claimed estimate.
  We will assume in the following, that $S>0$ is fixed such that
  $S\le1$. 
  \par As $h$ is a polynomial with $h(0)=0$, choosing $R$ sufficiently
  large, yields $\left|\frac{h(s)}s\right|\le R$ for all
  $s\in(0,S]$. We wish to point out that, in view of the asymptotics
  $\phi(r)\approx\frac rn$ obtained in Remark \ref{expl asymp rem}, it
  is possible to use any $R>\frac1n$ if $S>0$ is chosen sufficiently
  small.  As $S\le1$, we deduce that
  $|h(s)|\le s\cdot R\le S\cdot R\le R$ for all $s\in[0,S]$.  Thus, we
  finally see that $h\in C^1\left([0,S],\overline{B_R(0)}\right)$.
\end{proof}

Finally, we apply the above results to show that \eqref{phi intro eq}
has a solution near $r=0$ that fulfils the regularity condition of
Proposition \ref{reg orig prop}.
\begin{theorem}
  There exists some $\epsilon>0$ such that \eqref{phi intro eq},
  i.\,e.
  \[\phi'(r) =\left(1+\phi^2(r)\right) \cdot
    \left(1-\frac{n-1}r\phi(r)\right),\] has a solution
  $\phi\in C^1([0,\epsilon])$ with $\phi(0)=0$ such
  that the limits
  \[\lim\limits_{r\downarrow0} \frac{\phi(r)}r \quad \text{and}\quad
    \lim\limits_{r\downarrow0} \phi'(r)\] exist and coincide.
\end{theorem}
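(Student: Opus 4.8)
The plan is to recast \eqref{phi intro eq} as an equation of the type $u'(r) = f\left(r, \frac{u(r)}{r}\right)$ treated in Theorem \ref{1r sing ex thm} and to verify the hypotheses of that theorem using the approximate polynomial solutions from Corollary \ref{approx poly fct cor}. Concretely, I would take $N = 1$ and set
\[
  f(s,a) := \left(1 + s^2 a^2\right)\left(1 - (n-1)a\right), \qquad (s,a) \in [0,1] \times \R,
\]
so that $f\left(r, \frac{\phi(r)}{r}\right) = \left(1+\phi^2(r)\right)\left(1 - \frac{n-1}{r}\phi(r)\right)$; a solution $u$ of $u'(r) = f\left(r, \frac{u(r)}{r}\right)$ with $u(0) = 0$ is then exactly a solution of \eqref{phi intro eq} with $\phi(0) = 0$.

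The function $f$ is a polynomial, hence continuous, and on any set $[0,S] \times \overline{B_{2R}(0)}$ with $S \le 1$ it is Lipschitz in its second argument; estimating $\partial_a f$ on the compact set $[0,1] \times \overline{B_{2R}(0)}$ yields a Lipschitz constant $L$ that is independent of $S \in (0,1]$. I would fix $R := 1$, which is legitimate since $\frac1n \le \frac12 < 1$ for $n \ge 2$ and since any polynomial $h$ with $h(0) = 0$ has $\frac{h(s)}{s}$ bounded near $s = 0$; then $L = L(n)$ depends on $n$ alone. I would then choose $p \in \N$ with $p > \max\{L, 1\}$ and set $\epsilon := \frac{p-L}{p}R > 0$. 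Applying Corollary \ref{approx poly fct cor} with this $p$ and this $\epsilon$, while keeping $R = 1$ and only shrinking $S \le 1$, produces a polynomial $h$ with $h(0) = 0$, a number $S \in (0,1]$, the bounds $\left|\frac{h(s)}{s}\right| \le R$ on $(0,S]$ and $h \in C^1\left([0,S], \overline{B_R(0)}\right)$, together with the approximate-solution estimate
\[
  \left\Vert r \mapsto \left( h(r) - \int_0^r f\left(s, \frac{h(s)}{s}\right)\,ds \right) \right\Vert_{C^0_{-p}([0,S])} \le \epsilon = \frac{p-L}{p}R.
\]
All hypotheses of Theorem \ref{1r sing ex thm} then hold, so it provides a solution $\phi := u \in C^1([0,S])$ with $\phi(0) = 0$ of \eqref{phi intro eq} on $(0,S]$, and this $S$ serves as the $\epsilon$ of the statement. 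I expect the only genuine difficulty to be the bookkeeping of constants: a priori $L$ depends on $R$, and both $R$ and $\epsilon$ enter the hypotheses of Theorem \ref{1r sing ex thm} and of Corollary \ref{approx poly fct cor}, so the choices must be ordered to avoid circularity — fixing $R$ and restricting to $S \le 1$ decouples $L$ from all data except $n$, after which $p$, $\epsilon$, $h$ and $S$ can be chosen in that order.

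Finally, for the two limit conditions at the origin: since $\phi \in C^1([0,S])$, the derivative $\phi'$ is continuous up to $r = 0$, so $\lim_{r\downarrow0}\phi'(r) = \phi'(0)$ exists; and since in addition $\phi(0) = 0$, Lemma \ref{ur cont lem 2} shows that $r \mapsto \frac{\phi(r)}{r}$ extends continuously to $[0,S]$ with value $\phi'(0)$ at the origin, so $\lim_{r\downarrow0}\frac{\phi(r)}{r} = \phi'(0)$ as well. Hence both limits exist and coincide. (Passing to the limit $r \downarrow 0$ in \eqref{phi intro eq} moreover identifies their common value as $\frac1n$, in agreement with the formal expansion in Remark \ref{expl asymp rem}, although this is not needed for the statement.)
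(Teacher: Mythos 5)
Your proposal is correct and follows essentially the same route as the paper's own proof: rewrite \eqref{phi intro eq} as $\phi'(r)=f\left(r,\frac{\phi(r)}r\right)$ with $f(s,a)=\left(1+s^2a^2\right)(1-(n-1)a)$, verify the uniform Lipschitz hypothesis, feed the approximate polynomial solution from Corollary \ref{approx poly fct cor} into Theorem \ref{1r sing ex thm}, and conclude via Lemma \ref{ur cont lem 2}. The only (harmless) differences are bookkeeping: the paper shrinks $S$ so that $L=n$ and takes $p=n+1$ with $R$ supplied by the corollary, whereas you fix $R=1$ first and accept the resulting larger $L$; also note the paper explicitly reduces to $n\ge2$ via the grim reaper, which your use of Corollary \ref{approx poly fct cor} tacitly assumes.
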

\begin{proof}
  In view of the explicit grim reaper solution, we may assume
  $n\ge2$.\par
  We wish to apply Theorem \ref{1r sing ex thm} and rewrite
  \eqref{phi intro eq} as
  \begin{align*}
    \dot\phi(r)
    &=f\left(r,\frac{\phi(r)}r\right),\quad r\in(0,\epsilon),
      \intertext{where}
      f(s,a)
    &:=\left(1+s^2a^2\right)(1-(n-1)a).              
  \end{align*}
  Let $R>0$ be as in Corollary \ref{approx poly fct cor}. 
  Consider $S>0$ to be fixed later.  Let $a,b,s\in\R$ with
  $|a|,|b|\le 2R$ and $0\le s\le S$. We have to verify the uniform
  Lipschitz condition for $f$ and estimate
  \begin{align*}
    |f(s,a)-f(s,b)|
    &=\left|\left(1+s^2a^2\right)(1-(n-1)a)
      -\left(1+s^2b^2\right)(1-(n-1)b)\right|\umbruch\\
    &\le(n-1)|a-b|+s^2\left|a^2-b^2\right|
      +(n-1)s^2\left|a^3-b^3\right|\umbruch\\
    &=(n-1)|a-b| +s^2|a-b|\cdot|a+b|\\
    &\equad\quad {}+(n-1)s^2|a-b|\cdot
      \left|a^2+ab+b^2\right|\umbruch\\  
    &\le\left((n-1)+2S^2(2R)+3(n-1)S^2(2R)^2\right)\cdot |a-b|.
  \end{align*}
  For $S>0$ fixed sufficiently small and $L=n$, we obtain
  \begin{align*}
    |f(s,a)-f(s,b)|
    &\le L\cdot|a-b|. 
  \end{align*}
  We choose $p=n+1>\max\{L,1\}=n$. Now, Corollary \ref{approx poly fct
    cor} yields an approximative solution $h$ on $[0,\sigma]$ for some
  $\sigma\in(0,S]$ and Theorem \ref{1r sing ex thm} implies the
  existence of a solution $\phi\in C^1([0,\sigma])$ of \eqref{phi
    intro eq} with $\phi(0)=0$. Finally, Lemma \ref{ur cont lem 2} and
  the continuity of $\phi'$ imply that
  \[\lim\limits_{r\downarrow0}\frac{\phi(r)}r
    =\phi'(0) =\lim\limits_{r\downarrow0}\phi'(r)\] as claimed. 
\end{proof}

\section{Existence based on the shooting technique}
\label{shooting technique sec}
We wish to solve \eqref{phi intro eq} up to $r=0$, where it becomes
singular. Solutions shall fulfil the regularity conditions of
Proposition \ref{reg orig prop} so that in particular the term
$\frac{\phi(r)}r$ does not become singular. In order to find such a
solution, we start from an interval of possible initial values of
$\phi$ at some positive $r$. Then we try to solve \eqref{phi intro eq}
up to $r=0$ and discard initial values for which $\phi(r)$ becomes
unbounded. In this way, we ``shoot'' with solutions towards $r=0$ and
try to find the correct initial value, see Figure \ref{shooting.forward.pic}.
\begin{figure}[h]
  \includegraphics[width=0.8\linewidth]{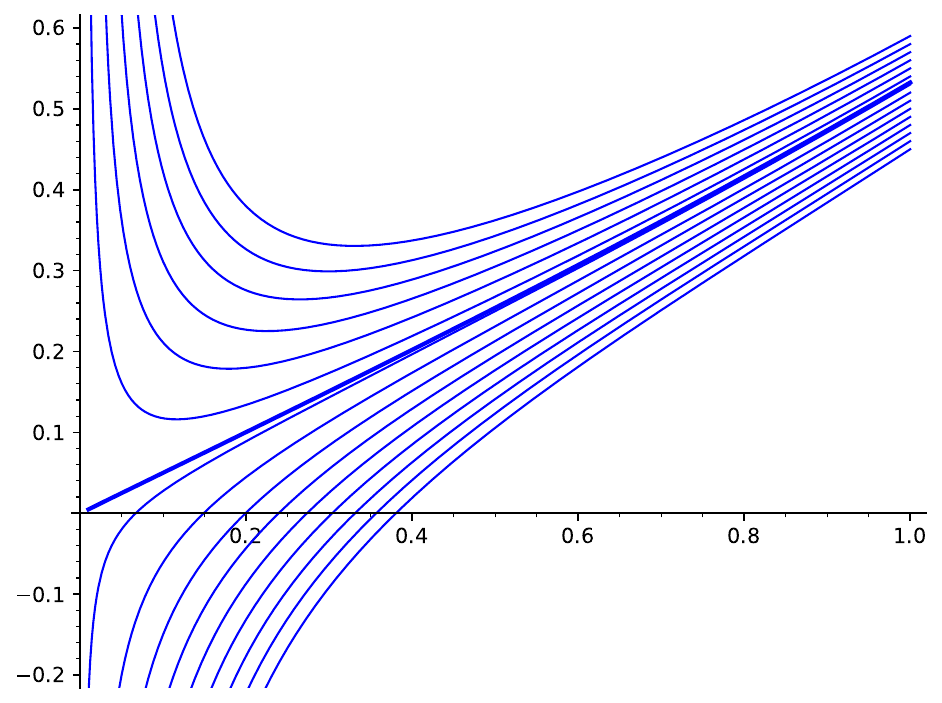}
  \caption{Shooting method. We see a family of solutions with
    different inital values at $r=1$ and a solution that corresponds to
    the translator as a thick line.}
  \label{shooting.forward.pic}
\end{figure}

In order to avoid working with the singular equation on a bounded
interval, we equivalently rewrite \eqref{phi intro eq} as a regular
equation on an unbounded interval.
\begin{lemma}
  Let $R>0$.  The differential equation \eqref{phi intro eq} for
  $r\in(0,R)$ is equivalent to the differential equation
  \begin{equation}
    \label{translat ode psi eq}
    \psi'(r)=\left(1+\psi^2(r)\right)\cdot
    \left((n-1)\cdot\psi(r)-e^{-r}\right)  
  \end{equation}
  for $\psi(r):=\phi(e^{-r})$ and $r\in(-\log R,\infty)$. 
\end{lemma}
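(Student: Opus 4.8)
The plan is to verify that the substitution $\psi(r) := \phi(e^{-r})$ sets up a bijective correspondence between $C^1$-solutions of \eqref{phi intro eq} on $(0,R)$ and $C^1$-solutions of \eqref{translat ode psi eq} on $(-\log R, \infty)$, and that under this correspondence one equation holds if and only if the other does. Since $r \mapsto e^{-r}$ is a smooth diffeomorphism from $(-\log R, \infty)$ onto $(0,R)$, with smooth inverse $\rho \mapsto -\log\rho$, it is clear from the chain rule that $\phi \in C^1((0,R))$ if and only if $\psi \in C^1((-\log R,\infty))$, and similarly for higher regularity; so there is no regularity loss either way and it suffices to match the two ODEs pointwise.

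First I would compute the derivative of $\psi$. Writing $\rho = e^{-r}$, so that $\frac{d\rho}{dr} = -e^{-r} = -\rho$, the chain rule gives
\[
  \psi'(r) = \phi'(e^{-r}) \cdot (-e^{-r}) = -e^{-r}\,\phi'(\rho).
\]
Now substitute the right-hand side of \eqref{phi intro eq} evaluated at $\rho = e^{-r}$, namely $\phi'(\rho) = \bigl(1+\phi^2(\rho)\bigr)\bigl(1 - \tfrac{n-1}{\rho}\phi(\rho)\bigr)$, and use $\phi(\rho) = \psi(r)$ together with $\tfrac{1}{\rho} = e^{r}$:
\[
  \psi'(r) = -e^{-r}\bigl(1+\psi^2(r)\bigr)\Bigl(1 - (n-1)e^{r}\psi(r)\Bigr)
  = \bigl(1+\psi^2(r)\bigr)\Bigl((n-1)\psi(r) - e^{-r}\Bigr),
\]
which is exactly \eqref{translat ode psi eq}. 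This shows that a solution $\phi$ of \eqref{phi intro eq} yields a solution $\psi$ of \eqref{translat ode psi eq}. For the converse, given a $C^1$-solution $\psi$ of \eqref{translat ode psi eq} on $(-\log R,\infty)$, define $\phi(\rho) := \psi(-\log\rho)$ for $\rho \in (0,R)$; the same computation run backwards — differentiating, using $\frac{d}{d\rho}(-\log\rho) = -\tfrac1\rho$ — recovers \eqref{phi intro eq}. Since the two substitutions $\phi \mapsto \psi$ and $\psi \mapsto \phi$ are mutually inverse, the equivalence is complete.

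There is no real obstacle here; the statement is essentially a change-of-variables bookkeeping exercise, and the only point requiring a word of care is to record explicitly that $r \mapsto e^{-r}$ maps $(-\log R,\infty)$ onto $(0,R)$ (so the intervals correspond correctly, including the case $R = \infty$, where $-\log R = -\infty$) and that this diffeomorphism preserves $C^1$ — and indeed $C^\infty$ — regularity, so that "equivalent" can legitimately be read at the level of $C^1$ (or smooth) solutions. The algebraic sign-tracking in the displayed computation is the only place an error could creep in, so I would present that computation in full rather than abbreviating it.
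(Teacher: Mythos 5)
Your computation is correct and is essentially identical to the paper's proof, which likewise differentiates $\psi(r)=\phi(e^{-r})$ via the chain rule, substitutes \eqref{phi intro eq}, and simplifies to \eqref{translat ode psi eq}. The extra remarks about the diffeomorphism $r\mapsto e^{-r}$ and the reverse substitution only make explicit what the paper leaves implicit, so no changes are needed.
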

\begin{proof}
  We differentiate the definition of $\psi$ and employ \eqref{phi
    intro eq} to obtain
  \begin{align*}
    \psi'(r)
    =&\,-e^{-r}\phi'\left(e^{-r}\right)\umbruch\\
    =&\,-e^{-r}\left(1+\phi^2\left(e^{-r}\right)\right)\cdot
       \left(1-\frac{n-1}{e^{-r}}
       \phi\left(e^{-r}\right)\right)\umbruch\\ 
    =&\,\left(1+\psi^2(r)\right)\cdot
       \left((n-1)\cdot\psi(r)-e^{-r}\right)
  \end{align*}
  as claimed. 
\end{proof}

In terms of $\psi$, the unique existence up to $r=0$ can be
reformulated as follows:
\begin{theorem}
  \label{tr mcf ex ori psi thm}
  Let $n\ge2$.  Then there exists a unique $a\in\R$ such that a
  solution $\psi$ to \eqref{translat ode psi eq} with $\psi(0)=a$
  exists on the interval $[0,\infty)$. Moreover, $\psi$ fulfils the
  regularity condition of Corollary \ref{reg orig exp cor}.
\end{theorem}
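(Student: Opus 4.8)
The strategy is the shooting method described in the introduction. Work with the regularised equation \eqref{translat ode psi eq}, which is an autonomous-looking, smooth ODE on the whole line, so Picard--Lindel\"of applies at every point and we have continuous dependence on initial data. Fix the shooting point at $r=0$ and consider the solution $\psi_a$ of \eqref{translat ode psi eq} with $\psi_a(0)=a$. The plan is to show: (i) for $a$ large, $\psi_a$ blows up to $+\infty$ at some finite $r>0$ (or, more precisely, leaves every compact set in finite time to the right); (ii) for $a$ small (very negative), $\psi_a$ escapes to $-\infty$ in finite time to the right; (iii) the set of ``good'' initial values — those for which $\psi_a$ exists on all of $[0,\infty)$ and stays bounded — is nonempty, and a suitably chosen element of it gives a solution with the right asymptotics as $r\to\infty$.

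\textbf{Barriers and the two escape regimes.} The sign structure of \eqref{translat ode psi eq} is the key. Write $\psi' = (1+\psi^2)\bigl((n-1)\psi - e^{-r}\bigr)$. If at some $r$ we have $\psi(r) > \tfrac{1}{n-1}e^{-r}$, then $\psi' > 0$ there; since $e^{-r}$ is decreasing, once $\psi$ exceeds a positive constant $c > \tfrac1{n-1}$ it stays above it, and then $\psi' \ge (1+\psi^2)\bigl((n-1)c - e^{-r}\bigr) \ge \delta(1+\psi^2)$ for $r$ large, which forces blow-up in finite time by comparison with $\arctan$. So for $a$ large enough $\psi_a$ cannot exist on $[0,\infty)$. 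Symmetrically, if $\psi(r_1) < 0$ then $\psi'(r_1) < 0$, so $\psi$ is decreasing and stays negative for $r\ge r_1$, and in fact $\psi' \le (1+\psi^2)\bigl((n-1)\psi - e^{-r}\bigr) \le -(1+\psi^2)e^{-r} + (n-1)(1+\psi^2)\psi$; once $\psi$ is sufficiently negative the cubic term dominates and drives $\psi \to -\infty$ in finite time. Hence for $a$ sufficiently negative $\psi_a$ also fails to exist on all of $[0,\infty)$. These two estimates pin down that the ``bad'' sets
\[
  A_+ := \{a : \psi_a \text{ eventually crosses above } \tfrac1{n-1}e^{-r}\}, \qquad
  A_- := \{a : \psi_a \text{ becomes negative at some } r\}
\]
are nonempty; the main work is to show each is open and that they do not exhaust $\R$.

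\textbf{Openness and the limiting argument.} Openness of $A_+$ and $A_-$ follows from continuous dependence on initial data together with the fact that the defining conditions are open (strict inequalities attained at a finite time, propagated forward by the barrier argument on a compact interval where Picard--Lindel\"of still controls the flow). Because $A_+$ and $A_-$ are disjoint — a solution that has gone negative can never again exceed $\tfrac1{n-1}e^{-r}>0$, and vice versa — and $\R$ is connected, the complement $G := \R\setminus(A_+\cup A_-)$ is nonempty and closed. Any $a\in G$ gives a solution $\psi_a$ that exists on $[0,\infty)$ (it cannot blow up: staying in the region $0\le\psi\le\tfrac1{n-1}e^{-r}$ off the bad sets, hence bounded, so by the continuation criterion it is global) and satisfies $0\le\psi_a(r)\le \tfrac1{n-1}e^{-r}\to 0$. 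This last squeeze already shows $\psi_a(r)\to 0$ as $r\to\infty$, and feeding this back into \eqref{translat ode psi eq} gives $\psi_a'(r)\to 0$ as well, with the refined asymptotics $\psi_a(r) \sim \tfrac1{n-1}e^{-r}$ and $e^r\psi_a(r) + e^r\psi_a'(r) \to 0$, which is exactly the regularity condition of Corollary \ref{reg orig exp cor}. (Concretely: from $\psi' = (1+\psi^2)((n-1)\psi - e^{-r})$ and $\psi\to 0$, the bracket behaves like $(n-1)\psi - e^{-r}$, and writing $\psi = e^{-r}w$ one finds $w\to \tfrac1{n-1}$, whence $e^r\psi \to \tfrac1{n-1}$ and $e^r\psi' = -e^r\psi + (1+\psi^2)((n-1)e^r\psi - 1)\cdot\tfrac{e^r}{e^r}\to -\tfrac1{n-1}+(n-1)\cdot\tfrac1{n-1}-1 = 0$, so the two limits sum to zero.)

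\textbf{Uniqueness.} For uniqueness I would argue that $G$ is a single point. Suppose $a < a'$ both lie in $G$. By uniqueness for the ODE the solutions are ordered, $\psi_a(r) < \psi_{a'}(r)$ for all $r\ge 0$. Consider the difference $\eta := \psi_{a'} - \psi_a > 0$; it satisfies a linear ODE $\eta' = c(r)\eta$ with $c(r) = \partial_\psi\bigl[(1+\psi^2)((n-1)\psi - e^{-r})\bigr]$ evaluated at an intermediate value, i.e. $c(r) = 2\xi(r)((n-1)\xi(r)-e^{-r}) + (1+\xi(r)^2)(n-1)$. On the region where both solutions stay in $[0,\tfrac1{n-1}e^{-r}]$, as $r\to\infty$ we have $\xi\to 0$ and $c(r)\to n-1 > 0$, so $\eta(r) = \eta(0)\exp\int_0^r c \to \infty$; but $\eta$ is bounded by $\tfrac1{n-1}e^{-r}$ on the good region, a contradiction. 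Hence $|G|=1$, giving the unique $a$. The step I expect to be the main obstacle is making the finite-time blow-up estimates in (i) and (ii) fully rigorous and uniform enough to conclude openness of $A_\pm$ — in particular handling the interplay between the decaying forcing term $e^{-r}$ and the cubic nonlinearity, and ensuring the barrier inequalities are strict so continuous dependence transfers them to nearby initial data.
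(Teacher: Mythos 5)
Your existence and uniqueness arguments are sound and essentially the paper's: you replace the paper's nested compact intervals $I_k$ by the complement of two open, disjoint, nonempty ``bad'' sets $A_\pm$ and invoke connectedness of $\R$. This is an equivalent packaging of the same shooting argument, resting on the same barriers (the content of Lemmas \ref{tr mcf ori up bd lem} and \ref{tr mcf ori lo bd lem}) and on continuous dependence on initial data; the uniqueness step via the linearised equation for the difference, with $c(r)\to n-1>0$ contradicting the decay $\eta\le\frac{e^{-r}}{n-1}$, is the paper's argument almost verbatim.

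The regularity part, however, contains a genuine error and a genuine gap. The error: the correct limit is $e^r\psi(r)\to\frac1n$, not $\frac1{n-1}$. Substituting $\psi=e^{-r}w$ into \eqref{translat ode psi eq} gives $w'=w+\left(1+\psi^2\right)\left((n-1)w-1\right)$, which tends to $nw-1$, whose only bounded equilibrium is $w=\frac1n$. With your value $L=\frac1{n-1}$ one would get $e^r\psi'\to(n-1)L-1=0$ and the two limits would sum to $\frac1{n-1}\neq0$, so the regularity condition of Corollary \ref{reg orig exp cor} would \emph{fail}; your check $-\frac1{n-1}+(n-1)\cdot\frac1{n-1}-1=0$ is an arithmetic slip (it equals $-\frac1{n-1}$). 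The gap: ``one finds $w\to\frac1{n-1}$'' is asserted, not proved. The squeeze $0\le e^r\psi\le\frac1{n-1}$ gives boundedness of $w$ but not convergence, and convergence is not automatic because $\frac1n$ is an unstable equilibrium of $w'=nw-1$: generic solutions diverge exponentially. One must argue, as the paper does, that if $w(r)\ge\frac1n+\epsilon$ at some large $r$ then $w'\ge(n-1)\epsilon$ from then on, eventually violating $w\le\frac1{n-1}$, and if $w(r)\le\frac1n-\epsilon$ then $w'\le-(n-1)\epsilon$, eventually violating $w\ge0$. Only this trapping argument converts the a priori bounds into the limit $e^r\psi\to\frac1n$, after which $e^r\psi'=\left(1+\psi^2\right)\left((n-1)e^r\psi-1\right)\to-\frac1n$ and the sum vanishes as required.
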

In view of Lemma \ref{existence large r lem}, it is no restriction to
choose $-\log R=0$. The following arguments can easily adapted to
other values of $R$, in particular in the case $R\in(0,1)$ and
$e^{-r}\le1$. \par
We will split the proof of Theorem \ref{tr mcf ex ori psi thm} into
several lemmata and prove those in this section.
\par In the following Lemma \ref{tr ori mcf eps lem} we will assume
that $\phi(r_0)\in[0,1]$. In order to establish the existence on
$[0,\infty)$, we do not need to consider other initial values: Lemma
\ref{tr mcf ori up bd lem} ensures that
$\phi(r_0)\le\frac{e^{-{r_0}}}{n-1}\le 1$ and Lemma \ref{tr mcf ori lo
  bd lem} yields $\psi(r_0)>0$.

\begin{lemma}
  \label{tr ori mcf eps lem}
  Let $n\ge2$. Then there exists $\epsilon=\epsilon(n)>0$ such that
  for any $r_0\ge0$ any maximal solution $\psi$ to \eqref{translat ode
    psi eq} with $\psi(r_0)\in[0,1]$ exists at least on the interval
  $[r_0,r_0+\epsilon]$.
\end{lemma}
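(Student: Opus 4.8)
The statement is a short-time existence result for the regular ODE \eqref{translat ode psi eq} with a uniform lower bound on the length of the existence interval that does not depend on $r_0$. The plan is to invoke Picard--Lindel\"of on the right-hand side
\[F(r,\psi):=\left(1+\psi^2\right)\cdot\left((n-1)\psi-e^{-r}\right),\]
which is smooth in both variables, and then extract an $\epsilon$ depending only on $n$ by controlling $F$ on an a priori known box around the initial point. Since $\psi(r_0)\in[0,1]$ and $r_0\ge0$ (so $0<e^{-r_0}\le1$), the natural box is $r\in[r_0,r_0+1]$, $\psi\in[-1,2]$: on this box $|F|$ is bounded by a constant $M=M(n)$ depending only on $n$ (indeed $|F(r,\psi)|\le(1+4)((n-1)\cdot2+1)=5(2n-1)$ suffices), and $F$ is Lipschitz in $\psi$ on this box with some constant $L=L(n)$. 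The standard Picard iteration on the integral equation $\psi(r)=\psi(r_0)+\int_{r_0}^r F(s,\psi(s))\,ds$, restricted to functions with values in $[0,1]+[-1/2,1/2]\subset[-1/2,3/2]\subset[-1,2]$, then converges on an interval $[r_0,r_0+\epsilon]$ provided $\epsilon\le\min\{1,\tfrac{1}{2M},\tfrac{1}{2L}\}$; this $\epsilon$ depends only on $n$, which is exactly the assertion.

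First I would fix $\epsilon=\epsilon(n):=\min\{1,\tfrac1{2M},\tfrac1{2L}\}$ with $M,L$ as above. Given any $r_0\ge0$ and any maximal solution $\psi$ with $\psi(r_0)\in[0,1]$, I would run the contraction-mapping argument on the complete metric space $X:=\{\eta\in C^0([r_0,r_0+\epsilon])\colon \eta(r_0)=\psi(r_0),\ |\eta(r)-\psi(r_0)|\le\tfrac12\}$ with the sup-norm, and the operator $(\mathcal T\eta)(r):=\psi(r_0)+\int_{r_0}^r F(s,\eta(s))\,ds$. The choice $\epsilon\le\tfrac1{2M}$ ensures $\mathcal T(X)\subset X$ (since the values of $\eta$ stay in $[-1,2]$ where $|F|\le M$), and $\epsilon\le\tfrac1{2L}$ makes $\mathcal T$ a contraction. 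The unique fixed point is a $C^1$ solution on $[r_0,r_0+\epsilon]$ with the prescribed value at $r_0$; by the uniqueness part of Picard--Lindel\"of it coincides with the restriction of the maximal solution $\psi$, so in particular the maximal solution is defined on all of $[r_0,r_0+\epsilon]$.

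The only point requiring any care — and the ``main obstacle'', such as it is — is making sure the a priori box is genuinely independent of $r_0$: this is where the hypotheses $r_0\ge0$ (hence $e^{-r_0}\le1$, so the inhomogeneous term is uniformly bounded) and $\psi(r_0)\in[0,1]$ (a uniform bound on the starting value) both get used, and one must verify that the Picard iterates never leave $[-1,2]$ within time $\epsilon$, which is precisely what $\epsilon\le\tfrac1{2M}$ buys. Everything else is the textbook Picard--Lindel\"of argument, so I would state it briefly rather than grinding through the iteration estimates. Alternatively, one may simply quote Picard--Lindel\"of for a single fixed compact box $[0,1]\times[-1,2]$ in the $(e^{-r},\psi)$-picture and translate in $r$; the uniform $\epsilon$ then falls out of translation invariance of the bound on $F$ restricted to $\psi\in[-1,2]$, $e^{-r}\in[0,1]$.
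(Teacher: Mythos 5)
Your proof is correct, but it takes a different route from the paper. You obtain the uniform existence time by running the Picard--Lindel\"of contraction directly on a box that is independent of $r_0$: since $r_0\ge0$ forces $e^{-r}\in(0,1]$ and $\psi(r_0)\in[0,1]$ confines the starting value, the bound $M(n)$ on $|F|$ and the Lipschitz constant $L(n)$ on $[r_0,\infty)\times[-1,2]$ depend only on $n$, so $\epsilon=\min\{1,\tfrac1{2M},\tfrac1{2L}\}$ works uniformly; local uniqueness then identifies the fixed point with the restriction of the maximal solution. The paper instead never re-runs the contraction: it brackets the right-hand side by $\left(1+\psi^2\right)(n-1)\psi$ from above and $\left(1+\psi^2\right)\left((n-1)\psi-1\right)$ from below, takes the comparison solutions $\psi_+$ (starting at $1$) and $\psi_-$ (starting at $0$) of these two autonomous equations, gets a uniform $\epsilon(n)$ for them from translation invariance in $r$, and concludes via the comparison theorem that $\psi$ is trapped between $\psi_\pm$ and hence cannot blow up before $r_0+\epsilon$. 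Your argument is more self-contained and makes the source of the uniformity completely explicit through the bounds on $F$; the paper's argument outsources the quantitative work to the translation invariance of the barrier equations and to the comparison principle, which fits the style of the rest of that section (the same barriers $\psi_\pm$ reappear in the surjectivity step of the existence proof). Either proof is acceptable here.
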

\begin{proof}
  According to the maximal existence theorem for ordinary differential
  equations, the solution $\psi$ exists on some maximal interval
  $[r_0,r_1)$, $r_0<r_1\le\infty$, and, if $r_1<\infty$, $\psi(r)$
  becomes unbounded as $r\uparrow r_1$.  For $r\ge0$, we have the
  estimate
  \[\left(1+\psi^2(r)\right)\cdot (n-1)\cdot\psi(r) \ge\psi'(r)\ge
    \left(1+\psi^2(r)\right)\cdot\left((n-1) \cdot\psi(r)-1\right).\]
  In order to derive bounds for $\psi$, we consider solutions
  $\psi_\pm$ to
  \[
    \begin{cases}
      \psi_+'(r)=\left(1+\psi_+^2(r)\right)\cdot (n-1)\cdot\psi_+(r),\\
      \psi_+(r_0)=1
    \end{cases}
  \]
  and
  \[
    \begin{cases}
      \psi_-'(r)=\left(1+\psi_-^2(r)\right)\cdot
      \left((n-1)\cdot\psi_-(r)-1\right),\\ 
      \psi_-(r_0)=0,
    \end{cases}
  \]
  respectively. According to the existence theorem for ordinary
  differential equations, there exists $\epsilon=\epsilon(n)>0$ such
  that $\psi_+$ and $\psi_-$ exist at least on the interval
  $[r_0,r_0+2\epsilon]$. \par Usually, we find such an $\epsilon$ that
  depends on $n$ and $r_0$. In our situation, however, $\epsilon$ only
  depends on $n$, because for any solution $\psi_\pm$, the function
  $r\mapsto\psi_\pm(r-R)$, $R\in\R$ is another solution to the
  respective equation. \par
  If $r_1\ge r_0+\epsilon$, the lemma follows. Therefore, we may
  assume that $r_1<r_0+\epsilon$.  Now, the comparison theorem for
  ordinary real-valued differential equations of first order implies
  that $\psi_+(r)\ge \psi(r)\ge \psi_-(r)$ for all $r\in[r_0,r_1)$. In
  particular, $\psi(r)$ does not become unbounded as $r\to r_1$. This
  contradicts the fact that $\psi$ exists only up to $r_1$ and hence
  has to become unbounded as $r\uparrow r_1$.
\end{proof}

We will derive bounds for solutions to \eqref{translat ode psi eq}
that exist on $[0,\infty)$.
\begin{lemma}
  \label{tr mcf ori up bd lem}
  Let $n\ge2$ and let $\psi$ be a solution to \eqref{translat ode psi
    eq}.
  \begin{enumerate}[label=(\roman*)]
  \item If $(n-1)\cdot\psi(r_0)-e^{-r_0}\ge0$ for some $r_0\ge0$,
    then we obtain \[(n-1)\cdot\psi(r)-e^{-r}>0\] for any $r\ge r_0$ for
    which $\psi$ exists. 
  \item If $\psi$ exists on $[0,\infty)$,
    then \[(n-1)\cdot\psi(r)-e^{-r}\le0\] for all $r\ge0$.
  \end{enumerate}
\end{lemma}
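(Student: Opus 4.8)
The plan is to treat the two parts separately, with part (i) providing the main barrier argument and part (ii) following from it by contradiction. For part (i), set $g(r) := (n-1)\psi(r) - e^{-r}$ and observe that the right-hand side of \eqref{translat ode psi eq} has the form $(1+\psi^2(r))\cdot g(r)$, so $\psi'(r) = (1+\psi^2(r))\cdot g(r)$. I would compute $g'(r) = (n-1)\psi'(r) + e^{-r} = (n-1)(1+\psi^2(r))\cdot g(r) + e^{-r}$. The key observation is that at any point where $g(r) = 0$, we get $g'(r) = e^{-r} > 0$, so $g$ is strictly increasing whenever it touches zero from below; hence once $g(r_0) \ge 0$, the function $g$ cannot return to zero and become negative. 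To make this rigorous without assuming more regularity than we have, I would argue: suppose for contradiction that $g(r_1) \le 0$ for some $r_1 > r_0$ in the existence interval; let $r_\ast := \sup\{r \in [r_0, r_1] : g(r) \ge 0\}$, which is well-defined since $g(r_0)\ge 0$, satisfies $g(r_\ast) = 0$ by continuity, and $g(r) < 0$ on $(r_\ast, r_1]$ (or at least on some right-neighbourhood). But $g'(r_\ast) = e^{-r_\ast} > 0$ forces $g(r) > 0$ for $r$ slightly larger than $r_\ast$, contradicting the definition of $r_\ast$. This also yields the strict inequality $g(r) > 0$ claimed (if $g(r_0) = 0$ then $g$ immediately becomes positive; if $g(r_0) > 0$ the same supremum argument applies).

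For part (ii), I would argue by contradiction: suppose $\psi$ exists on $[0,\infty)$ but $g(r_0) = (n-1)\psi(r_0) - e^{-r_0} > 0$ for some $r_0 \ge 0$ (the case $g(r_0) = 0$ reduces to the previous one since then $g(r) > 0$ for $r > r_0$). By part (i), $g(r) > 0$ for all $r \ge r_0$. I then want to show this forces $\psi$ to blow up in finite time, contradicting global existence. Since $g(r) > 0$ gives $\psi'(r) = (1+\psi^2(r)) g(r) > 0$, the function $\psi$ is increasing; moreover $g(r) > 0$ means $(n-1)\psi(r) > e^{-r} > 0$, so $\psi(r) > 0$. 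The cleanest route is to note that $g$ is itself increasing and positive on $[r_0,\infty)$: from $g' = (n-1)(1+\psi^2) g + e^{-r} \ge (n-1)g > 0$, so $g(r) \ge g(r_0) =: \delta > 0$ for all $r \ge r_0$. Then $\psi'(r) \ge (1+\psi^2(r))\cdot \delta$, and comparison with the ODE $y' = \delta(1+y^2)$, whose solutions are $y(r) = \tan(\delta(r - c))$ and blow up in finite time, shows $\psi$ cannot exist for all $r \ge r_0$ — contradiction.

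The main obstacle is organising the contradiction in part (ii) cleanly: one must be careful that $g$ stays bounded below by a positive constant (rather than merely positive), which I get from the differential inequality $g' \ge (n-1)g$ and Grönwall, or even more simply from the observation that $g' > 0$ whenever $g \ge 0$ combined with part (i). Once $g \ge \delta > 0$ is in hand, the blow-up comparison with $\tan$ is routine. A minor point to handle with care is the boundary case $r_0 = 0$ versus $r_0 > 0$ and the distinction between $g(r_0) > 0$ and $g(r_0) = 0$ in part (ii), but both collapse to the same analysis via part (i).
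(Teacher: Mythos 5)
Your proposal is correct and follows essentially the same route as the paper: both introduce $w(r)=(n-1)\psi(r)-e^{-r}$ (your $g$), note $\psi'=(1+\psi^2)w$, derive $w'=(n-1)(1+\psi^2)w+e^{-r}>(n-1)(1+\psi^2)w$, obtain (i) from this inequality, and prove (ii) by contradiction through finite-time blow-up of a comparison ODE. The one place where you genuinely diverge is the blow-up step in (ii): the paper inserts $\psi=\frac{w+e^{-r}}{n-1}$ to get $w'\ge\frac1{n-1}w^3$ and compares $w$ with the explicit solution of $W'=\frac1{n-1}W^3$, whereas you only keep the constant lower bound $w\ge\delta$ and let $\psi$ itself blow up via $\psi'\ge\delta\left(1+\psi^2\right)$ and the tangent function. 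Your variant is marginally more elementary (no cubic comparison solution needed), at the cost of first having to justify $w\ge\delta$, which your monotonicity observation $w'\ge(n-1)w>0$ supplies; the paper gets the same lower bound directly from its differential inequality. One small point of care: in the edge case $g(r_1)=0$ with $r_1>r_0$, your supremum argument needs the extra remark that $g'(r_1)=e^{-r_1}>0$ forces $g<0$ slightly to the left of $r_1$, which then reduces to the case already handled; you gesture at this but it is worth stating explicitly.
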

\begin{proof}
  We define $w(r):=(n-1)\cdot\psi(r)-e^{-r}$ and observe that
  \[\psi'(r) =\left(1+\psi^2(r)\right)\cdot w(r).\] This yields the
  differential inequality
  \begin{equation}
    \begin{split}
      \label{tr mcf ori up bd w est}
      w'(r)&=(n-1)\cdot\psi'(r)+e^{-r}\\
      &>(n-1)\cdot\left(1+\psi^2(r)\right)\cdot w(r).
    \end{split}
  \end{equation}
  for $w$. Now, we can address our assertions. 
  \begin{enumerate}[label=(\roman*)]
  \item This is a direct consequence of the Estimate \eqref{tr mcf ori
      up bd w est}.
  \item We argue by contradiction and assume that
    \[\epsilon:=w(r_1)=(n-1)\cdot\psi(r_1)-e^{-r_1}>0\] for some
    $r_1\ge0$. It follows from \eqref{tr mcf ori up bd w est} that
    $w(r)\ge\epsilon$ for all $r\ge r_1$. Therefore, we deduce that
    \begin{align*}
      w'(r)\ge&\,(n-1)\cdot\psi^2(r)\cdot w(r)\umbruch\\
      =&\,(n-1)\cdot\left(\frac{w(r)+e^{-r}}{n-1}\right)^2\cdot
         w(r)\umbruch\\
      \ge&\,\frac1{n-1}\cdot w^3(r). 
    \end{align*}
    The solution $W$ to $W'(r)=\frac1{n-1}\cdot W^3(r)$ with
    $W(r_1)=\epsilon$ is given by
    \[W(r)=\frac1{\sqrt{\frac1{\epsilon^2}-\frac 2{n-1}(r-r_1)}}.\] In
    particular, it exists only up to
    $r_1+\frac{n-1}{2\epsilon^2}=:r_2$ and tends to infinity as
    $r\uparrow r_2$. Hence, due to the comparison theorem for ordinary
    differential equations, we obtain $w\ge W$ and deduce that $w$ can
    exist at most up to $r_2<\infty$. We arrive at a
    contradiction. This yields our second assertion.  \qedhere
  \end{enumerate}
\end{proof}

\begin{lemma}
  \label{tr mcf ori lo bd lem}
  Let $n\ge2$ and let $\psi$ be a solution to \eqref{translat ode psi
    eq}. 
  \begin{enumerate}[label=(\roman*)]
  \item If $\psi(r_0)\le0$ for some $r_0\ge0$, then $\psi(r)<0$ for
    all $r\ge r_0$ for which $\psi$ exists. 
  \item If $\psi$ exists on $[0,\infty)$, then \[\psi(r)>0\] for all
    $r\ge0$.
  \end{enumerate}
\end{lemma}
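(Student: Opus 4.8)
The plan is to exploit the sign of the right-hand side of \eqref{translat ode psi eq}: at any point where $\psi$ vanishes one has $\psi'=-e^{-r}<0$, so the solution is pushed strictly below zero and, intuitively, cannot come back up. This single observation drives both parts.

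For part~(i) I would first dispose of the boundary case $\psi(r_0)=0$: there $\psi'(r_0)=-e^{-r_0}<0$, hence $\psi<0$ on some interval $(r_0,r_0+\delta)$, and after replacing $r_0$ by a point of that interval I may assume $\psi(r_0)<0$. Then I would argue by contradiction: if $\psi(\bar r)\ge0$ for some $\bar r>r_0$ at which $\psi$ exists, set $r_1:=\inf\{r\in(r_0,\bar r]\colon\psi(r)\ge0\}$. By continuity and $\psi(r_0)<0$ one gets $r_1>r_0$, $\psi<0$ on $[r_0,r_1)$, and $\psi(r_1)=0$. For $r\uparrow r_1$ the difference quotient $\psi(r)/(r-r_1)$ is a ratio of two negative numbers, hence positive, so $\psi'(r_1)\ge0$, contradicting $\psi'(r_1)=-e^{-r_1}<0$.

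For part~(ii) I would also argue by contradiction: if $\psi(r_0)\le0$ for some $r_0\ge0$, then part~(i) gives $\psi(r)<0$ for all $r\ge r_0$, and I pick $r_1\ge r_0$ with $a:=\psi(r_1)<0$. Using $\psi<0$, $e^{-r}>0$ and $1+\psi^2>0$ in the equation yields, for $r\ge r_1$,
\[\psi'(r)=\bigl(1+\psi^2(r)\bigr)\bigl((n-1)\psi(r)-e^{-r}\bigr)<\bigl(1+\psi^2(r)\bigr)(n-1)\psi(r)<(n-1)\psi^3(r),\]
the last step because enlarging the positive factor $\psi^2$ to $1+\psi^2$ makes the negative product smaller. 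Comparing with the explicit solution $W$ of $W'=(n-1)W^3$, $W(r_1)=a$, namely $W(r)=-\bigl(a^{-2}-2(n-1)(r-r_1)\bigr)^{-1/2}$, which exists only up to $r_1+\tfrac1{2(n-1)a^2}$ and tends to $-\infty$ there, the comparison theorem for first-order ODEs gives $\psi\le W$ while both exist, so $\psi$ also becomes unbounded at or before $r_1+\tfrac1{2(n-1)a^2}<\infty$ — contradicting existence on $[0,\infty)$. This is the same blow-up mechanism as in the proof of Lemma~\ref{tr mcf ori up bd lem}(ii). Hence no such $r_0$ exists and $\psi(r)>0$ for all $r\ge0$.

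The sign bookkeeping and the explicit integration of the comparison equation are routine. The step that needs care is orienting the comparison correctly: here the relevant quantity tends to $-\infty$, so $\psi$ plays the role of a subsolution bounded above by $W$, which itself blows up to $-\infty$ in finite time; getting this direction right, and not overlooking the boundary case $\psi(r_0)=0$ in part~(i), is the only genuine obstacle.
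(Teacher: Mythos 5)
Your proof is correct and follows essentially the same route as the paper: part (i) rests on the observation that $\psi'=-\bigl(1+\psi^2\bigr)e^{-r}<0$ wherever $\psi=0$ (the paper states this sign argument more tersely, without your explicit first-crossing bookkeeping), and part (ii) is the same finite-time blow-up comparison with an explicit solution of a cubic ODE, the only cosmetic difference being that the paper further weakens $\psi'\le(n-1)\psi^3$ to $\psi'\le\psi^3$ before comparing. No gaps.
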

\begin{proof}
  \neueZeile
  \begin{enumerate}[label=(\roman*)]
  \item From \eqref{translat ode psi eq}, we obtain directly that
    $\psi'(r)<0$ for all $r$ such that $\psi(r)\le0$. This yields the
    first part of the lemma. 
  \item Assume that there exists some $r_1\ge0$ such that
    $\psi(r_1)\le0$. Then, as above, according to \eqref{translat ode
      psi eq}, we obtain $\psi'(r_1)<0$. Hence, we may, by increasing
    $r_1$ if necessary, assume without loss of generality that
    $\psi(r_1)\le-\epsilon$ for some $\epsilon>0$. Using
    \eqref{translat ode psi eq} once again, we deduce that
    $\psi(r)\le-\epsilon$ for all $r\ge r_1$. Hence, dropping some
    negative terms in the equation, we deduce
    that \[\psi'(r)\le(n-1)\cdot\psi^3(r) \le\psi^3(r)\] for
    $r\ge r_1$. The solution $\Psi$ to $\Psi'(r)=\Psi^3(r)$ with
    $\Psi(r_1)=-\epsilon$ is given
    by \[\Psi(r)=-\frac1{\sqrt{\frac1{\epsilon^2}-2(r-r_1)}}.\]
    Similarly as in the proof of Theorem \ref{tr mcf ori up bd lem},
    $\Psi$ exists only up to $r_1+\frac1{2\epsilon^2}=:r_2$ and hence,
    due to the comparison principle, $\psi$ exists at most up to
    $r_2<\infty$. This contradiction finishes the proof of the lemma.
    \qedhere
  \end{enumerate}
\end{proof}

\begin{proof}[Proof of Theorem \ref{tr mcf ex ori psi thm}: Existence]
  We define $I_k\subset[0,1]$ as the set of all points $a\in[0,1]$
  such that the solution $\psi$ to \eqref{translat ode psi eq} with
  $\psi(0)=a$ exists at least up to
  $k\cdot\epsilon\equiv k\cdot\epsilon(n)$ with $\epsilon(n)$ as in
  Lemma \ref{tr ori mcf eps lem} and
  \[0\le\psi(k\cdot\epsilon)\le\frac1{n-1}e^{-k\cdot\epsilon}\le1.\]
  According to Lemma \ref{tr mcf ori up bd lem} and Lemma \ref{tr mcf
    ori lo bd lem}, we obtain $I_{k+1}\subset I_k$ for any
  $k\in\N$. Moreover, the comparison theorem, implies that each set
  $I_k$, $k\in\N$, is an interval. Furthermore, due to the continuous
  dependence of solutions on initial values for ordinary differential
  equations, we see that each interval $I_k$, $k\in\N$, is closed and
  hence compact. \par
  \emph{Surjectivity:} We now claim that for any
  $b\in\left[0,\frac1{n-1}e^{-k\cdot\epsilon}\right]$, there is some
  $a\in I_k\subset[0,1]$ such that the solution $\psi$ to
  \eqref{translat ode psi eq} with $\psi(0)=a$ fulfils
  $\psi(k\cdot\epsilon)=b$. We use induction on $k$ to prove this
  claim. For $k=0$, we may take $a=b$. Assume now, that the claim is
  already established for some $k_0\in\N$. Let $\psi_{k_0,+}$ be the
  solution to \eqref{translat ode psi eq} such that
  $\psi_{k_0,+}(k_0\cdot\epsilon) =\frac1{n-1}e^{-k_0\cdot\epsilon}$
  and let $\psi_{k_0,-}$ be the solution to \eqref{translat ode psi
    eq} such that $\psi_{k_0,-}(k_0\cdot\epsilon) =0$. According to
  Lemma \ref{tr ori mcf eps lem}, $\psi_{k_0,+}$ and $\psi_{k_0,-}$
  exist at least up to $r=(k_0+1)\cdot\epsilon$. Moreover, Lemma
  \ref{tr mcf ori up bd lem} implies that
  \[\psi_{k_0,+}((k_0+1)\cdot\epsilon)\ge
    \frac1{n-1}e^{-(k_0+1)\cdot\epsilon}\] and Lemma \ref{tr mcf ori
    lo bd lem} yields that
  $\psi_{k_0,-}((k_0+1)\cdot\epsilon)\le0$. We set
  $a_{k_0,+}:=\psi_{k_0,+}(0)$ and $a_{k_0,-}:=\psi_{k_0,-}(0)$. Then
  we infer from the comparison theorem and from the maximal existence
  theorem, that solutions $\psi$ to \eqref{translat ode psi eq} with
  $\psi(0)\in[a_{k_0,-},a_{k_0,+}]$ exist at least up to
  $r=(k_0+1)\cdot\epsilon$. We denote the solution $\psi$ with
  $\psi(0)=a$ by $\psi_a$. Our definitions are such that
  $\psi_{a_{k_0,+}} =\psi_{k_0,+}$ and
  $\psi_{a_{k_0,-}} =\psi_{k_0,-}$. Hence, we obtain that
  \[\psi_{a_{k_0,-}}((k_0+1)\cdot\epsilon)\le0
    \le\frac1{n-1}e^{-(k_0+1)\cdot\epsilon}\le
    \psi_{a_{k_0,+}}((k_0+1)\cdot\epsilon).\] Due to continuous
  dependence on the initial value, the values
  $\psi_a((k_0+1)\cdot\epsilon)$ depend continuously on
  $a$. Therefore, we find for each
  $b\in\left[0,\frac1{n-1}e^{-(k_0+1)\cdot\epsilon}\right]$ some
  $a\in[a_{k_0,-},a_{k_0,+}]$ such that
  $\psi_a((k_0+1)\cdot\epsilon) =b$. It is clear from the definition
  of $I_{k_0+1}$ that $a\in I_{k_0+1}$ for $a$ as above. This is
  illustrated in Figures \ref{shooting.fig} and \ref{shooting.fig.log}. 
  \begin{figure}[ht]
    \includegraphics[width=0.8\linewidth]{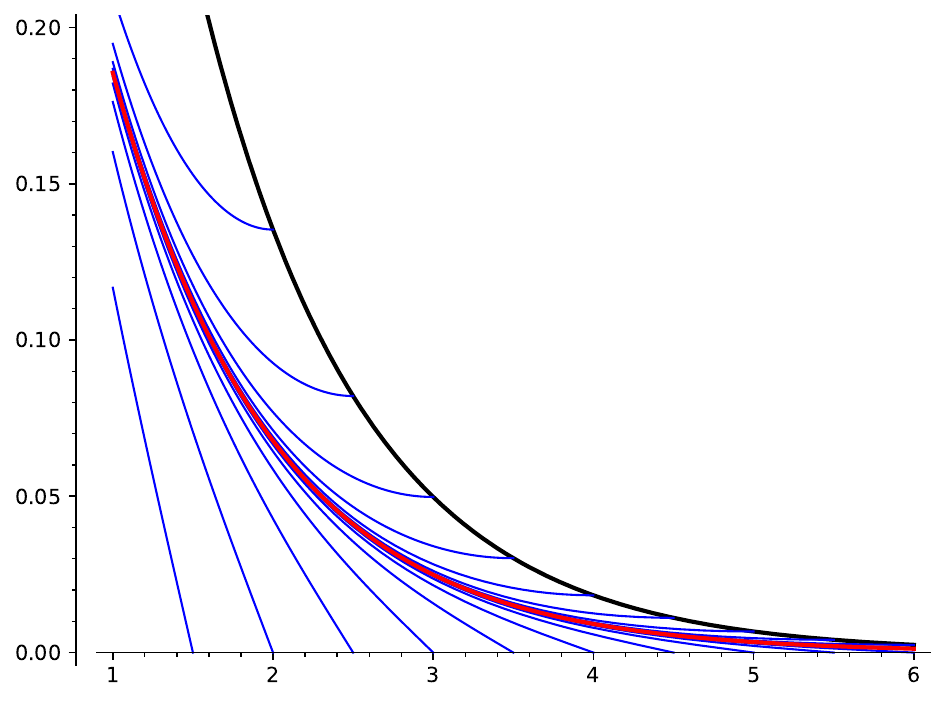}
    \caption{Approximative solutions to \eqref{translat ode psi
        eq}. In this figure, the black curve is the graph of
      $r\mapsto\frac{e^{-r}}{n-1}$, see Lemma \ref{tr mcf ori up bd
        lem}. All other curves in the picture solve \eqref{translat
        ode psi eq}. The red curve corresponds to a translator which
      is smooth at the origin. The blue curves are $\psi_{k_0,-}$ and
      $\psi_{k_0,+}$ for $\epsilon=\frac12$ and
      $k_0=3,4,\ldots,12$. They pass through $(r,0)$ and
      $\left(r,\frac{e^{-r}}{n-1}\right)$, respectively, for
      $r=\epsilon\cdot k_0=1.5,2.0,\ldots,6.0$. In order to draw them,
      we use these points as initial values and solve \eqref{translat
        ode psi eq} backwards up to $r=1.0$.}
    \label{shooting.fig}
  \end{figure}
  
  \begin{figure}[ht]
    \includegraphics[width=0.8\linewidth]{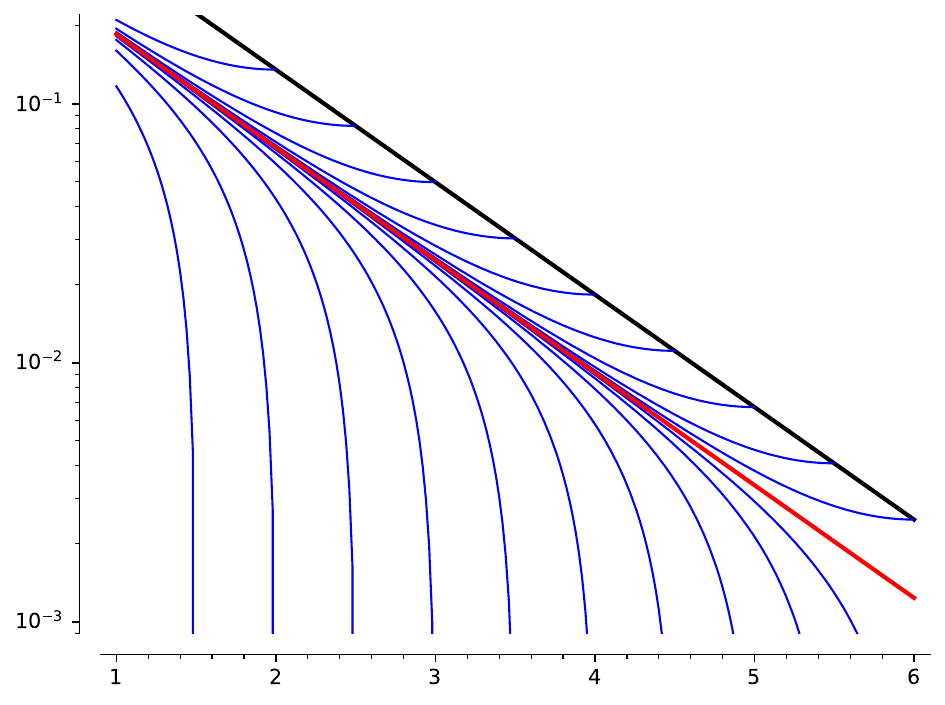}
    \caption{Approximative solutions to \eqref{translat ode psi eq} as
      in Figure \ref{shooting.fig}, in logarithmic scaling.}
    \label{shooting.fig.log}
  \end{figure}
  \begin{align*}
    \displaybreak[1]
  \end{align*}
  \par This last claim implies in particular that each $I_k$,
  $k\in\N$, is non-empty. As a sequence of decreasing non-empty
  compact intervals has a non-empty intersection, we find some
  $a\in[0,1]$ such that $a\in I_k$ for each $k\in\N$. This means that
  $\psi_a$ exists on the interval $[0,\infty)$. Hence, we have
  established the existence part of Theorem \ref{tr mcf ex ori psi
    thm}.
\end{proof}

\begin{proof}[Proof of Theorem \ref{tr mcf ex ori psi thm}:
  Uniqueness] Let $\psi,\Psi\colon[0,\infty)\to\R$ be solutions to
  \eqref{translat ode psi eq}. The Equation \eqref{translat ode psi
    eq} is a first order real valued ordinary differential
  equation. Therefore, solutions have to coincide or $\Psi-\psi$ is
  everywhere positive or negative. Hence, we may assume without loss
  of generality for the uniqueness proof that $\Psi(r)>\psi(r)$ for
  all $r\ge0$.  According to Lemma \ref{tr mcf ori up bd lem} and
  Lemma \ref{tr mcf ori lo bd lem}, we have the
  estimates
  \begin{equation}
    \label{psi Psi exp eq}
    0\le\psi(r)\le\Psi(r)\le \frac{e^{-r}}{n-1}
  \end{equation}
  for all $r\ge0$. We consider the difference $w:=\Psi-\psi$. Using
  \eqref{translat ode psi eq} and the estimate above, we obtain
  \begin{align*}
    w'(r)=&\,\Psi'(r)-\psi'(r)\umbruch\\
    =&\,\left(1+\Psi^2(r)\right)\cdot
       \left((n-1)\cdot\Psi(r)-e^{-r}\right)\\
          &\,-\left(1+\psi^2(r)\right)\cdot
            \left((n-1)\cdot\psi(r)-e^{-r}\right)\umbruch\\
    =&\,\int\limits_0^1 \frac d{d\tau} \left\{\left(1+(\tau\Psi(r)
       +(1-\tau)\psi(r))^2\right)\right.\\
          &\,\quad\left.\cdot
            \left((n-1)\cdot(\tau\Psi(r)+(1-\tau)\psi(r))
            -e^{-r}\right)\right\}\,d\tau\umbruch\\
    =&\,\int\limits_0^1 2(\tau\Psi(r)+(1-\tau)\psi(r))\\
    &\,\quad\cdot
       \left((n-1)\cdot(\tau\Psi(r)+(1-\tau)\psi(r))
       -e^{-r}\right)\,d\tau\cdot w(r)\\
    &\,+\int\limits_0^1
      \left(1+(\tau\Psi(r)+(1-\tau)\psi(r))^2\right)(n-1)\,d\tau\cdot
      w(r)\umbruch\\
    \ge&\,(n-1)\cdot w(r) -c\cdot e^{-r}\cdot w(r)\umbruch\\
    \ge&\,\left(n-\frac32\right)\cdot w(r) \ge\frac12 w(r), 
  \end{align*}
  where we have assumed for the last estimate that $r\ge r_0$ for some
  sufficiently large $r_0>0$. We deduce that
  $w(r)\ge e^{\frac12(r-r_0)}w(r_0)$. This contradicts the estimates
  above in \eqref{psi Psi exp eq} unless $w(r_0)=0$. Hence, we deduce
  that $\Psi(r_0)=\psi(r_0)$ and conclude that $\Psi=\psi$ as claimed.
\end{proof}

\begin{remark}

  In the proof of the uniqueness statement in Theorem \ref{tr mcf ex
    ori psi thm}, we have used the fundamental theorem of calculus to
  derive a differential inequality for the difference
  $w:=\Psi-\psi$. This technique can be applied to many differential
  equations, in particular to differential equations of the
  form \[\psi'(r)=f(r,\psi(r))\] with $f\in C^1$. \par In our special
  case, however, elementary algebra yields the same result. Under
  assumptions as in that proof, the relevant calculation becomes
  \begin{align*}
    w'(r)
    &=\Psi'(r)-\psi'(r)\umbruch\\
    &=\left(1+\Psi^2(r)\right)\left((n-1)\Psi(r)-e^{-r}\right)\\
    &\equad -\left(1+\psi^2(r)\right)
      \left((n-1)\psi(r)-e^{-r}\right)\umbruch\\ 
    &=(n-1)(\Psi(r)-\psi(r))\\
    &\equad+(n-1)\left(\Psi^3(r)-\psi^3(r)\right)\\
    &\equad-e^{-r}\left(\Psi^2(r)-\psi^2(r)\right)\umbruch\\
    &=(n-1)w(r)\\
    &\equad+(n-1)\left(\Psi^2(r)+\Psi(r)\psi(r)+\psi^2(r)\right)
      (\Psi(r)-\psi(r))\\
    &\equad-e^{-r}(\Psi(r)+\psi(r)) (\Psi(r)-\psi(r))\umbruch\\
    &=(n-1)w(r)\\
    &\equad+(n-1)\left(\Psi^2(r)+\Psi(r)\psi(r)+\psi^2(r)\right)
      w(r)\\
    &\equad-e^{-r}(\Psi(r)+\psi(r))w(r)\umbruch\\
    &\ge(n-1)w(r) +0-2e^{-r}\frac{e^{-r}}{n-1}w(r)\umbruch\\
    &\ge\left(n-\frac32\right)w(r).  
  \end{align*}
  The resulting inequality is the same as in the uniqueness proof
  above.
\end{remark}

\begin{proof}[Proof of Theorem \ref{tr mcf ex ori psi thm}: Regularity
  condition] According to Corollary \ref{reg orig exp cor}, we have to
  prove that
  \[\lim\limits_{r\to\infty} e^r\psi(r)\quad\text{and}\quad
    \lim\limits_{r\to\infty} e^r\psi'(r)\] exist and sum up to
  zero.
  \begin{enumerate}[label=(\roman*)]
  \item For the proof that the first limit exists, we define
    $w(r):=e^r\psi(r)$. From Lemma \ref{tr mcf ori up bd lem} and
    Lemma \ref{tr mcf ori lo bd lem} we infer that
    \[0\le \psi(r)\le \frac{e^{-r}}{n-1} \quad\text{and,
        equivalently,}\quad 0\le w(r)\le\frac1{n-1}\] for all
    $r\ge0$. Next, we derive a differential equation for $w$. We get
    \begin{align*}
      w'(r)=&\,e^r\psi(r)+e^r\psi'(r)\umbruch\\
      =&\,e^r\psi(r)+e^r\left(1+\psi^2(r)\right)
         \left((n-1)\psi(r)-e^{-r}\right)\umbruch\\
      =&\,e^r\psi(r) +(n-1)e^r\psi(r) -1 +(n-1)e^r\psi^3(r)
         -\psi^2(r)\umbruch\\
      =&\,nw(r) +(n-1)\psi^2(r)w(r)-1-\psi^2(r).
    \end{align*}
    Heuristically, we expect that the terms involving $\psi$ can be
    neglected for large values of $r$ and we obtain $u'(r)=nu(r)-1$,
    where we have used $u$ instead of $w$ to avoid
    confusion. Solutions are given by $u(r)=\frac1n+ae^{nr}$. Such
    solutions are only bounded if $a=0$. We conjecture that $w$
    behaves similarly as $u$. Therefore, we wish to show that
    \begin{equation}
      \label{w lim 1n eq}
      \lim\limits_{r\to\infty} w(r)=\frac1n.
    \end{equation}
    \begin{itemize}
    \item \textbf{Bound from above:} Let $\epsilon>0$. Fix $r_0\ge0$
      large enough to ensure that $e^{-2r_0}\le\epsilon$. Let us
      assume for the rest of this part of the proof that $r\ge
      r_0$. Then we obtain
      $\psi^2(r)\le \frac{e^{-2r}}{(n-1)^2}\le e^{-2r}\le\epsilon$.
      Now, consider $r$ such that $w(r)\ge\frac1n+\epsilon$. We deduce
      that
      \[w'(r)\ge n\left(\frac1n +\epsilon\right)+0-1-\epsilon
        =n\epsilon-\epsilon =(n-1)\epsilon.\] Therefore, $w$ continues
      to grow with $w'(r)\ge(n-1)\epsilon$ if it ever reaches or
      exceeds $\frac1n+\epsilon$. For large values of $r$, this
      contradicts $w(r)\le\frac1{n-1}$. Therefore, we have shown that
      $w(r)\le\frac1n+\epsilon$ for all $r\ge r_0$.
    \item \textbf{Bound from below:} We proceed similarly as in the
      proof of the bound from above. Fix $\epsilon\in\R$ such that
      $0<\epsilon<\frac1n$. Assume that $r_0$ is large enough such
      that $(n-1)\psi^2(r)\le\epsilon$ for all $r\ge r_0$.  Recall
      from Lemma \ref{tr mcf ori lo bd lem} that $0\le w(r)$. Now, we
      consider $r\ge r_0$ such that $w(r)\le\frac1n-\epsilon$ and wish
      to show that this is impossible for large values of $r$. We get
      \[w'(r)\le n\left(\frac1n-\epsilon\right)
        +\epsilon\left(\frac1n-\epsilon\right)-1+0
        \le-(n-1)\epsilon.\] Similarly as for the bound from above, we
      see that $w(r)\le\frac1n-\epsilon$ is preserved as $r$
      increases, hence $w'(r)\le-(n-1)\epsilon$ will eventually lead
      to $0>w(r)$.  This contradicts $w(r)\ge0$ and we infer that
      $w(r)\ge\frac1n-\epsilon$ for all $r\ge r_0$.
    \end{itemize}
    This finishes the proof of \eqref{w lim 1n eq}.
  \item In order to compute the limit of $e^r\psi'(r)$, we multiply
    \eqref{translat ode psi eq} with $e^r$ and use to results
    above. We obtain
    \begin{align*}
      e^r\psi'(r) =&\,\left(1+\psi^2(r)\right)\cdot\left((n-1)\cdot
                     e^r\psi(r)-1\right)\umbruch\\
      \overset{r\to\infty}\longrightarrow
                   &\,(1+0)\cdot\left((n-1)\frac1n-1\right)
                     =-\frac1n 
    \end{align*}
    as claimed. \qedhere
  \end{enumerate}  
\end{proof}

\section{Existence based on approximating problems}
\label{approx pbls sec}
In order to solve \eqref{phi intro eq}, we will consider it on
intervals of the form $\left[\frac1k,\infty\right)$, $k\in\N_{\ge1}$,
and pass to a limit as $k\to\infty$ to obtain a solution to the
singular equation \eqref{phi intro eq}. Here, our choice of the
initial value at $r=\frac1k$ is motivated by the expansion
$\phi(r)\approx\frac1n r$ for $r\approx 0$ derived in Remark \ref{expl
  asymp rem}.

Let $\phi_k\colon\left[\frac1k,\infty\right)\to\R$, $k\in\N_{\ge1}$, be
solutions to the initial value problems
\begin{equation}
  \label{1k approx pbl}
  \begin{cases}
    \phi'(r)
    =\left(1+\phi^2(r)\right)\left(1-\frac{n-1}r\phi(r)\right),
    &r\ge\frac1k,\\
    \phi\left(\frac1k\right)=\frac1n\frac1k.&
  \end{cases}
\end{equation}

In the following results, we will always assume that $\phi_k$ is a
maximal solution to \eqref{1k approx pbl} and only consider
$r\ge\frac1k$.
\begin{lemma}
  \label{phi first bd lem}
  Let $n\ge2$ and let $\phi_k$ be a maximal solution to \eqref{1k approx
    pbl}. Then we have
  \[0<\phi_k(r)<\frac1{n-1}r\] for all $r\ge\frac1k$, where $\phi_k$ is
  defined. 
\end{lemma}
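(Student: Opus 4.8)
The plan is to use the standard barrier/comparison argument based on the sign of $\phi'$ at the boundary curves $\phi = 0$ and $\phi = \tfrac{1}{n-1}r$, starting from the initial point $r_0 = \tfrac1k$ where $\phi_k(r_0) = \tfrac1n\tfrac1k$ lies strictly between them (one checks $0 < \tfrac1n\tfrac1k < \tfrac1{n-1}\tfrac1k$ since $n \ge 2$). First I would record the two elementary facts extracted directly from \eqref{1k approx pbl}: whenever $\phi_k(r) \le 0$ we have $\phi_k'(r) = (1+\phi_k^2)(1 - \tfrac{n-1}{r}\phi_k) > 0$, and whenever $\phi_k(r) \ge \tfrac{1}{n-1}r$ we have $1 - \tfrac{n-1}{r}\phi_k \le 0$, hence $\phi_k'(r) \le 0$; in fact the inequality is strict unless $\phi_k(r) = \tfrac1{n-1}r$ exactly. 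These sign conditions say that the region $\{(r,y) : 0 < y < \tfrac1{n-1}r\}$ is positively invariant under the flow.

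Next I would make the invariance rigorous. For the lower bound, suppose for contradiction that $\phi_k(r_1) \le 0$ for some $r_1 > \tfrac1k$ in the domain of $\phi_k$; let $r_* = \inf\{r \in [\tfrac1k, r_1] : \phi_k(r) \le 0\}$. By continuity $\phi_k(r_*) = 0$ and $r_* > \tfrac1k$, so $\phi_k > 0$ on $[\tfrac1k, r_*)$; but then $\phi_k'(r_*) > 0$ by the first sign fact, which forces $\phi_k < 0$ immediately to the left of $r_*$, contradicting the definition of $r_*$. Hence $\phi_k(r) > 0$ throughout. For the upper bound I would argue symmetrically with the auxiliary comparison: the linear function $\ell(r) = \tfrac1{n-1}r$ satisfies $\ell(\tfrac1k) = \tfrac1{n-1}\tfrac1k > \phi_k(\tfrac1k)$, and at any first crossing point $r_*$ where $\phi_k(r_*) = \ell(r_*)$ one has $\phi_k'(r_*) \le 0 < \tfrac1{n-1} = \ell'(r_*)$, so $\phi_k - \ell$ is strictly decreasing there and cannot cross from below to above; this rules out $\phi_k(r) \ge \tfrac1{n-1}r$ for any $r > \tfrac1k$ in the domain.

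The main obstacle, such as it is, is bookkeeping rather than mathematics: one must be careful that the comparison arguments use only the values of $\phi_k$ on its actual (maximal) domain — the statement explicitly restricts to $r \ge \tfrac1k$ where $\phi_k$ is defined — and that the "first crossing" infimum arguments are applied on closed subintervals of that domain so that continuity gives a genuine crossing point. A cleaner packaging, which I would probably prefer, is to note that $y \equiv 0$ and $y = \tfrac1{n-1}r$ are respectively a strict subsolution and a supersolution of the scalar ODE $\phi' = F(r,\phi)$ on $(0,\infty)$, and invoke the comparison principle for scalar first-order ODEs (already used repeatedly in Section \ref{shooting technique sec}) together with the strict inequalities at $r = \tfrac1k$; this immediately yields $0 < \phi_k(r) < \tfrac1{n-1}r$ on the whole interval of existence. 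Either way no estimate is delicate, and unlike Lemma \ref{existence large r lem} we do not even need the bound to conclude global existence here — that will presumably be addressed separately.
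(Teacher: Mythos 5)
Your proposal is correct and follows essentially the same barrier/invariance argument as the paper: at $\phi=0$ the right-hand side equals $1>0$, and at $\phi=\tfrac{r}{n-1}$ the right-hand side vanishes while the barrier's derivative $\tfrac1{n-1}$ is positive. The paper states these sign facts and concludes the bounds are ``preserved''; your first-crossing bookkeeping just makes that preservation explicit.
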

\begin{proof}
  We will write $\phi$ instead of $\phi_k$ in the proof.  For
  $r=\frac1k$, we have $0<\phi(r)<\frac1{n-1}r$. For $\phi(r)=0$, the
  right-hand side of the equation equals one, hence $\phi(r)>0$ is
  preserved. Similarly, for $\phi(r)=\frac1{n-1}r$, the right-hand
  side of the equation vanishes, whereas the derivative of
  $\frac1{n-1}r$ is positive. Hence $\phi(r)<\frac1{n-1}r$ is also
  preserved.
\end{proof}

As a consequence from standard theory for ordinary differential
equation concerning a characterisation of the maximal existence
interval, we get
\begin{corollary}
  Let $n\ge2$.  The maximal solution $\phi_k$ to \eqref{1k approx pbl} is
  indeed defined for all $r\ge\frac1k$.
\end{corollary}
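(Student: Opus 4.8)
The plan is to show that $\phi_k$ cannot escape to $\pm\infty$ on any finite interval, so that the standard characterisation of the maximal existence interval forces the solution to be global on $\left[\frac1k,\infty\right)$. The bounds in Lemma \ref{phi first bd lem} already give us exactly what we need. Indeed, on the maximal existence interval $\left[\frac1k,r^*\right)$ we have
\[
0<\phi_k(r)<\frac1{n-1}r
\]
for all $r\in\left[\frac1k,r^*\right)$.

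First I would invoke the standard theory: if the maximal existence interval were $\left[\frac1k,r^*\right)$ with $r^*<\infty$, then $|\phi_k(r)|$ would have to be unbounded as $r\uparrow r^*$ (this uses that the right-hand side of \eqref{1k approx pbl} is locally Lipschitz in $\phi$ and continuous in $r$ on $\left[\frac1k,\infty\right)\times\R$, so solutions extend as long as they stay in a compact subset of the domain). Then I would observe that the bound from Lemma \ref{phi first bd lem} gives
\[
|\phi_k(r)|<\frac1{n-1}r\le\frac1{n-1}r^*
\]
for all $r\in\left[\frac1k,r^*\right)$, contradicting the unboundedness. Hence $r^*=\infty$, i.e.\ $\phi_k$ is defined for all $r\ge\frac1k$.

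There is essentially no obstacle here: this is the standard ``a priori bound implies global existence'' argument, and the only slightly delicate point is making sure the a priori bound of Lemma \ref{phi first bd lem} is valid on the whole maximal interval (not just on some a priori chosen interval) — but that lemma is already phrased for the maximal solution, so it applies directly. I would keep the write-up to a couple of sentences citing Lemma \ref{phi first bd lem} and the characterisation of the maximal existence interval.
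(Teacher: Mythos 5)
Your argument is correct and is exactly what the paper intends: the a priori bound $0<\phi_k(r)<\frac1{n-1}r$ from Lemma \ref{phi first bd lem} combined with the blow-up characterisation of the maximal existence interval rules out a finite right endpoint. The paper leaves this as an immediate consequence of standard theory, so your write-up simply makes the same reasoning explicit.
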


Consider any interval $(a,b)$ compactly contained in
$(0,\infty)$. Then all $\phi_k$, if defined, are uniformly
bounded. Due to the equation, this applies also to
$\phi_k'$. Therefore, Arzel\`a-Ascoli yields a subsequence that
converges uniformly to a function $\phi$. As \eqref{1k approx pbl} can
equivalently be rewritten as an integral equation, $\phi$ also solves
\eqref{1k approx pbl}. We apply this with intervals of the form
$\left(\frac1l,l\right)$, $l\in\N$, consider a diagonal sequence and
obtain a solution $\phi\colon(0,\infty)\to\R$ of \eqref{phi intro eq}.

\begin{lemma}
  Let $n\ge2$ and let $\phi_k$ be a solution to \eqref{1k approx
    pbl}. Let
  $\epsilon>0$. Then \[\frac1nr<\phi_k(r)<\frac1nr+\epsilon r^2\]
  applies to all $r$ such that $\frac1k\le r\le n(n-1)^3\epsilon$.
\end{lemma}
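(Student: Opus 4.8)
The plan is a pair of one-sided barrier arguments: compare $\phi_k$, which we abbreviate by $\phi$, with the sub-barrier $r\mapsto\frac1n r$ from below and with the super-barrier $r\mapsto\frac1n r+\epsilon r^2$ from above. In each case I use the elementary principle that if a solution touches a strict barrier from the relevant side at a first contact point, its derivative there is forced to have the ``wrong'' sign.

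\emph{Lower bound.} Put $g(r):=\phi(r)-\frac1n r$, so that $g\left(\frac1k\right)=0$. Whenever $g(r)=0$, i.e.\ $\phi(r)=\frac1n r$, equation \eqref{1k approx pbl} gives
\[
  \phi'(r)=\left(1+\frac{r^2}{n^2}\right)\left(1-\frac{n-1}{n}\right)=\frac1n+\frac{r^2}{n^3},
\]
hence $g'(r)=\frac{r^2}{n^3}>0$. Thus $g$ is strictly increasing at $r=\frac1k$, so $g>0$ just to the right of $\frac1k$; and $g$ cannot return to $0$, because at any zero $r_1>\frac1k$ the relation $g'(r_1)>0$ would force $g<0$ immediately to the left of $r_1$, contradicting $g>0$ on $\left(\frac1k,r_1\right)$. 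Hence $\frac1n r<\phi(r)$ for every $r>\frac1k$ in the domain of $\phi$, which covers the asserted range (with equality exactly at $r=\frac1k$).

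\emph{Upper bound.} Put $h(r):=\phi(r)-\frac1n r-\epsilon r^2$, so $h\left(\frac1k\right)=-\epsilon/k^2<0$. Suppose, towards a contradiction, that $h$ vanishes somewhere on $\left[\frac1k,n(n-1)^3\epsilon\right]$ and let $r_1$ be the first such point; then $h<0$ on $\left[\frac1k,r_1\right)$ and therefore $h'(r_1)\ge0$. Writing $\phi_1:=\phi(r_1)=\frac1n r_1+\epsilon r_1^2$, one computes $1-\frac{n-1}{r_1}\phi_1=\frac1n-(n-1)\epsilon r_1$, so \eqref{1k approx pbl} gives
\[
  h'(r_1)=\left(1+\phi_1^2\right)\left(\frac1n-(n-1)\epsilon r_1\right)-\frac1n-2\epsilon r_1\le\frac{\phi_1^2}{n}-(n+1)\epsilon r_1,
\]
where the last step uses only that $\phi_1^2\left(\frac1n-(n-1)\epsilon r_1\right)\le\frac{\phi_1^2}{n}$. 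By Lemma~\ref{phi first bd lem}, $0<\phi_1<\frac{r_1}{n-1}$, so $\phi_1^2<\frac{r_1^2}{(n-1)^2}$; together with $r_1\le n(n-1)^3\epsilon$ this yields
\[
  h'(r_1)<\frac{r_1^2}{n(n-1)^2}-(n+1)\epsilon r_1\le(n-1)\epsilon r_1-(n+1)\epsilon r_1=-2\epsilon r_1<0,
\]
contradicting $h'(r_1)\ge0$. Hence $h<0$ throughout $\left[\frac1k,n(n-1)^3\epsilon\right]$, which is the claimed upper bound.

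The comparison bookkeeping is routine; the one point that needs care is the displayed chain of estimates for $h'(r_1)$. The simplification that makes it painless is that $\phi_1^2\left(\frac1n-(n-1)\epsilon r_1\right)\le\frac{\phi_1^2}{n}$ holds regardless of the sign of $\frac1n-(n-1)\epsilon r_1$, so no case distinction is needed; after that one only has to check that the prescribed threshold $r_1\le n(n-1)^3\epsilon$ is enough to absorb $\frac{\phi_1^2}{n}$, which it is with room to spare (any bound up to $n(n+1)(n-1)^2\epsilon$ would work). So I expect this elementary sign estimate, combined with the quadratic bound $\phi_1<\frac{r_1}{n-1}$ drawn from Lemma~\ref{phi first bd lem}, to be the main — though still easy — obstacle.
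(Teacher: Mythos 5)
Your proof is correct and follows essentially the same route as the paper's: a first-touching-point barrier argument with the sub-barrier $\frac rn$ and the super-barrier $\frac rn+\epsilon r^2$, using the bound $\phi_k(r)<\frac r{n-1}$ from Lemma \ref{phi first bd lem} to absorb the quadratic term and arrive at the same threshold $r\le n(n-1)^3\epsilon$. Your parenthetical observation that the lower bound is an equality at $r=\frac1k$ is a fair (and minor) point about the statement itself, which the paper's proof also only establishes for $r>\frac1k$.
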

\begin{proof}
  Once again, we write $\phi$ instead of $\phi_k$ in the proof. 
  Let $w(r)=\phi(r)-\frac1nr$. We have
  $w\left(\frac1k\right)=0$. Using \eqref{1k approx pbl}, we get
  \begin{align}
    w'(r)
    &=\phi'(r)-\frac1n\nonumber\umbruch\\
    &=\left(1+\phi^2(r)\right)\left(1-\frac{n-1}r\phi(r)\right)
      -\frac1n\nonumber\umbruch\\
    & \label{phi mrn eq}
      =\left(1+\phi^2(r)\right)\left(1-\frac{n-1}rw(r) -\frac{n-1}r\frac1n
      r\right) -\frac1n\umbruch\\
    &=\left(1+\phi^2(r)\right)\left(-\frac{n-1}rw(r) +\frac1n
      \right) -\frac1n\nonumber\umbruch\\
    &=-\left(1+\phi^2(r)\right)\frac{n-1}rw(r)
      +\frac1n\phi^2(r).\nonumber
  \end{align}
  As in Lemma \ref{phi first bd lem}, we see immediately that $w(r)>0$
  for all $r>\frac1k$, corresponding to $\phi(r)>\frac1nr$. \par If
  $\phi(r)=\frac1n r+\epsilon r^2$, we use our bounds from Lemma
  \ref{phi first bd lem} and \eqref{phi mrn eq} to deduce that
  \[w'(r)\le-(1+0)(n-1)\epsilon r +\frac1n\frac1{(n-1)^2}r^2.\]
  Therefore, we see that $w(r)\le\epsilon r^2$ is preserved as long as
  $r\le n(n-1)^3\epsilon$.
\end{proof}

This estimate ensures that $\frac1nr\le\phi(r)\le\frac1nr+\epsilon
r^2$ for small values of $r$, depending on $\epsilon$. Thus $\phi$ can
be extended continuously to $r=0$ by $\phi(r)=0$ and we see
that \[\lim\limits_{r\downarrow0}\frac{\phi(r)}r=\frac1n.\]

As we wish to find a solution $\phi$ that corresponds to a
$C^2$-hypersurface at $r=0$, we have to verify the condition in
Proposition \ref{reg orig prop}. This amounts to showing that
$\lim\limits_{r\downarrow0}\phi'(r)=\frac1n$. In order to prove this,
we use \eqref{phi intro eq} for $r>0$ and limit theorems to deduce
that
\[\lim\limits_{r\downarrow0}\phi'(r)
  =(1+0)\left(1-(n-1)\lim\limits_{r\downarrow0} \frac{\phi(r)}r\right)
  =1-\frac{n-1}n =\frac1n.\]  Therefore,
  $\phi$ yields a rotationally symmetric translating solution to mean
  curvature flow of class $C^2$.

\section{Existence based on a regularised equation}
\label{reg eq sec}
Let $n\ge2$. Consider for $\epsilon>0$ the initial value problems
\begin{equation}
  \label{eps ode}
  \begin{cases}
    \phi'(r) =\left(1+\phi^2(r)\right)\left(1-\frac{n-1}{r+\epsilon}
      \phi(r)\right), & r\ge0,\\
    \phi(0)=0.
  \end{cases}
\end{equation}

Solutions exist for all $r\ge0$. 
\begin{lemma}
  Let $n\ge2$. Then for each $\epsilon>0$, the maximal solution
  $\phi_\epsilon$ to \eqref{eps ode} exists on the entire interval
  $[0,\infty)$. 
\end{lemma}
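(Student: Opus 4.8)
The plan is to show that the maximal solution $\phi_\epsilon$ to \eqref{eps ode} cannot blow up in finite time by trapping it between two barriers, exactly in the spirit of Lemma \ref{existence large r lem}. First I would invoke Picard--Lindel\"of: since the right-hand side of \eqref{eps ode}, namely $(r,\phi)\mapsto\left(1+\phi^2\right)\left(1-\frac{n-1}{r+\epsilon}\phi\right)$, is continuous on $[0,\infty)\times\R$ and locally Lipschitz in $\phi$ (the factor $\frac{n-1}{r+\epsilon}$ is smooth and bounded on $[0,\infty)$ precisely because $\epsilon>0$), there is a unique maximal solution $\phi_\epsilon$ on some interval $[0,T)$, and by the characterisation of the maximal existence interval it suffices to prove that $|\phi_\epsilon|$ stays bounded on every finite interval $[0,T']$.

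Next I would extract the sign/barrier information directly from the equation. If $\phi_\epsilon(r)\le 0$ then the factor $1-\frac{n-1}{r+\epsilon}\phi_\epsilon(r)\ge 1>0$ while $1+\phi_\epsilon^2(r)>0$, so $\phi_\epsilon'(r)>0$; since $\phi_\epsilon(0)=0$ this already forces $\phi_\epsilon(r)\ge 0$ for all $r\ge 0$, so in fact $\phi_\epsilon\ge 0$. For the upper barrier, note that whenever $\phi_\epsilon(r)\ge\frac{r+\epsilon}{n-1}$ the second factor is $\le 0$, hence $\phi_\epsilon'(r)\le 0$; comparing with the increasing function $r\mapsto\frac{r+\epsilon}{n-1}$ (whose derivative is $\frac1{n-1}>0$) and using $\phi_\epsilon(0)=0<\frac{\epsilon}{n-1}$, a standard barrier argument gives $\phi_\epsilon(r)\le\frac{r+\epsilon}{n-1}$ for all $r$ in the interval of existence. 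Thus
\[
  0\le\phi_\epsilon(r)\le\frac{r+\epsilon}{n-1}
\]
as long as $\phi_\epsilon$ is defined, which bounds $\phi_\epsilon$ on any finite interval.

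Since $|\phi_\epsilon|$ cannot tend to infinity on a finite interval, the maximal existence interval must be all of $[0,\infty)$, which is the claim. I expect no serious obstacle here; the only point requiring a little care is phrasing the barrier comparison cleanly (e.g.\ arguing that the first time $\phi_\epsilon$ would exceed a barrier is impossible because the derivative condition points the wrong way), but this is exactly the technique already used in Lemma \ref{existence large r lem} and in Lemma \ref{phi first bd lem}, so I would simply mirror that argument. One could alternatively cite those lemmata almost verbatim after observing that replacing $\frac1r$ by $\frac1{r+\epsilon}$ only improves the situation near the origin and does not affect the large-$r$ behaviour.
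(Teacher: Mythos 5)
Your proposal is correct and follows essentially the same route as the paper: local existence via Picard--Lindel\"of (possible since $\epsilon>0$ keeps the right-hand side regular at $r=0$), the sign argument giving $\phi_\epsilon\ge0$, the barrier $r\mapsto\frac{r+\epsilon}{n-1}$ giving the upper bound, and the characterisation of the maximal existence interval to conclude. No gaps.
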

\begin{proof}
  The initial condition ensures that $\phi_\epsilon$ exists on some
  maximal interval $[0,r_\epsilon)$,
  $r_\epsilon\in(0,\infty]$. Clearly, there exists exactly one
  solution for fixed $\epsilon$. We see that $\phi'_\epsilon(r)>0$
  whenever $\phi_\epsilon(r)=0$ and deduce that $\phi_\epsilon\ge0$ as
  long as it exists. Moreover
  $\phi_\epsilon(r)\ge\frac{r+\epsilon}{n-1}$ implies that
  $\phi'_\epsilon(r)\le0$. We obtain
  $0\le\phi_\epsilon(r)\le\frac{r+\epsilon}{n-1}$. According to the
  characterisation of the maximal existence interval $[0,r_\epsilon)$,
  we have $r_\epsilon=\infty$ or $|\phi_\epsilon(r)|\to\infty$ as
  $r\uparrow r_\epsilon$. Thus our bounds on $\phi_\epsilon(r)$ ensure
  that $r_\epsilon=\infty$ for all $\epsilon>0$ as claimed.
\end{proof}

For fixed $r>0$, the solutions $\phi_\epsilon(r)$ are monotone in
$\epsilon$. 
\begin{lemma}
  Let $n\ge2$, fix $0<\epsilon_1<\epsilon_2$ and let
  $\phi_{\epsilon_1}$ and $\phi_{\epsilon_2}$ be solutions to
  \eqref{eps ode} with $\epsilon=\epsilon_1$ and
  $\epsilon=\epsilon_2$, respectively. Then we
  have \[\phi_{\epsilon_1}(r)<\phi_{\epsilon_2}(r)\] for all $r>0$.
\end{lemma}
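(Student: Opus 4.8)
The plan is to compare the two solutions directly via the differential inequality satisfied by their difference, exactly in the spirit of the uniqueness argument in Theorem \ref{tr mcf ex ori psi thm}. Set $w(r) := \phi_{\epsilon_2}(r) - \phi_{\epsilon_1}(r)$. At $r=0$ both solutions vanish, so $w(0)=0$; I first examine the behaviour of $w$ for small $r>0$ to see that $w$ becomes strictly positive immediately, and then show it can never return to zero.

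First I would compute $w'(r)$ using \eqref{eps ode} for each $\epsilon_i$. Writing out the two right-hand sides and subtracting, the key point is to isolate the term coming from the difference of the coefficients $\frac{n-1}{r+\epsilon_1}$ and $\frac{n-1}{r+\epsilon_2}$. Since $\epsilon_1<\epsilon_2$, we have $\frac{1}{r+\epsilon_1} > \frac{1}{r+\epsilon_2}$, so at any point where $\phi_{\epsilon_1}(r)=\phi_{\epsilon_2}(r)$ (in particular with the common positive value guaranteed by the previous lemma's bound $0\le\phi_\epsilon(r)\le\frac{r+\epsilon}{n-1}$, which for $r>0$ gives $\phi_\epsilon(r)>0$ by the strict preservation argument) the extra term is
\[
  \bigl(1+\phi^2(r)\bigr)\Bigl(n-1\Bigr)\phi(r)\left(\frac{1}{r+\epsilon_1}-\frac{1}{r+\epsilon_2}\right) > 0 .
\]
So $w'(r)>0$ whenever $w(r)=0$ and $r>0$. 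This is the standard ``barrier touching'' situation: it shows that once $w$ vanishes at some $r>0$ it must strictly increase afterwards, hence $w$ cannot cross from positive to zero, and at $r=0$ a short argument (the right-hand side at $r=0$ for the difference, or examining $\phi_\epsilon'(0) = 1$ for both and then the next order term, or simply noting $w\ge 0$ by the no-downcrossing property combined with $w(0)=0$ and strict positivity of $w'$ at any later zero) forces $w(r)>0$ for all $r>0$.

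More carefully, to get the strict inequality for \emph{all} $r>0$ rather than merely $w\ge 0$, I would argue as follows. The general solution-difference $w$ satisfies a linear-type inequality: away from zeros of $w$ we can write $w'(r) = A(r)\,w(r) + E(r)$ where $A(r)$ is a bounded continuous coefficient on compact subintervals of $(0,\infty)$ coming from the mean-value/fundamental-theorem-of-calculus expansion of the nonlinear terms in $w$, and $E(r) = \bigl(1+\phi_{\epsilon_2}^2(r)\bigr)(n-1)\phi_{\epsilon_2}(r)\left(\frac{1}{r+\epsilon_1}-\frac{1}{r+\epsilon_2}\right) \ge 0$, with $E(r)>0$ for $r>0$ since $\phi_{\epsilon_2}(r)>0$ there. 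Then Gronwall (or the integrating-factor representation) gives $w(r) = e^{\int_{r_0}^r A} w(r_0) + \int_{r_0}^r e^{\int_s^r A}\,E(s)\,ds$; taking $r_0\downarrow 0$ and using $w(r_0)\to 0$ together with the nonnegativity of the integrand, and the strict positivity of $E$ on $(0,r)$, yields $w(r)>0$. This closes the proof.

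The main obstacle I anticipate is the behaviour at the singular endpoint $r=0$: one cannot naively apply the comparison theorem on $[0,\infty)$ because the coefficients $\frac{n-1}{r+\epsilon_i}$ agree only in the limit and the initial data coincide, so $w\equiv 0$ is consistent with the equation near $0$ to first order. The resolution is that the forcing term $E(r)$, though vanishing at $r=0$, is strictly positive for every $r>0$ (it is $O(r)\cdot O(\epsilon_2-\epsilon_1)$ there since $\phi_{\epsilon_2}(r)\sim \tfrac{r}{n}$), which is exactly enough to push $w$ strictly positive once we integrate; the bounded coefficient $A(r)$ on $(0,r]$ only rescales this by a bounded factor and cannot kill it. All remaining computations — writing out $w'$, bounding $A$ on compact intervals using the a priori bounds $0\le\phi_{\epsilon_i}(r)\le\frac{r+\epsilon_i}{n-1}$, and the Gronwall step — are routine.
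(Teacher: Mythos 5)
Your proof is correct, but it takes a genuinely different route from the paper's. The paper first differentiates \eqref{eps ode} to compute $\phi_\epsilon(0)=0$, $\phi_\epsilon'(0)=1$ and $\phi_\epsilon''(0)=1-\frac{n-1}{\epsilon}$; since the second derivative at the origin is strictly increasing in $\epsilon$, this breaks the first-order degeneracy at $r=0$ and yields strict separation $\phi_{\epsilon_1}<\phi_{\epsilon_2}$ on some $(0,\delta)$. It then closes with a first-touching-point argument: at a minimal $r_0>0$ with equality, the equation forces $\phi_{\epsilon_1}'(r_0)<\phi_{\epsilon_2}'(r_0)$, contradicting $w'(r_0)\le0$ for $w=\phi_{\epsilon_2}-\phi_{\epsilon_1}$. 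You instead write $w'=A\,w+E$ with $A$ bounded on compacts (note the fundamental-theorem-of-calculus representation gives this everywhere, not only ``away from zeros of $w$'', since $\frac{n-1}{r+\epsilon_1}\le\frac{n-1}{\epsilon_1}$ keeps the coefficient regular at $r=0$) and $E(r)>0$ for $r>0$ by strict positivity of $\phi_{\epsilon_2}$ there, and then use Duhamel/Gronwall with $r_0\downarrow0$ to conclude $w(r)=\int_0^r e^{\int_s^r A}E(s)\,ds>0$. Your approach avoids the explicit computation of $\phi_\epsilon''(0)$ and the separate touching-point step, handling the degeneracy at the origin purely through the positivity of the integrated forcing term; it is also structurally the same technique as the paper's own uniqueness proof in Section \ref{shooting technique sec}, as you observe. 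The paper's argument is somewhat more elementary (no integral representation needed) but relies on the pointwise Taylor data at $r=0$ to get started.
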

\begin{proof}
  We differentiate \eqref{eps ode} and obtain
  \begin{align*}
    \phi''(r)
    &=\left(1+2\phi(r)\phi'(r)\right)
      \left(1-\frac{n-1}{r+\epsilon}\phi(r)\right)\\  
    &\equad+\left(1+\phi^2(r)\right)
      \left(\frac{n-1}{(r+\epsilon)^2}\phi(r) 
      -\frac{n-1}{r+\epsilon}\phi'(r)\right). 
  \end{align*}
  Combining this with \eqref{eps ode} and the initial condition, we
  get for any $\epsilon>0$
  \[\phi_\epsilon(0)=0,\quad \phi'_\epsilon(0)=1\quad\text{and}\quad
    \phi''_\epsilon(0) =1-\frac{n-1}\epsilon\cdot1.\] Therefore, for
  fixed $0<\epsilon_1<\epsilon_2$, there exists some $\delta>0$ such
  that
  \[\phi_{\epsilon_1}(r) <\phi_{\epsilon_2}(r) \quad\text{for all
    }0<r<\delta.\] Assume now, that there exists some minimal $r_0>0$
  such that $\phi_{\epsilon_1}(r_0)=\phi_{\epsilon_2}(r_0)$. Using
  \eqref{eps ode} once again, we get
  $\phi'_{\epsilon_1}(r_0)<\phi'_{\epsilon_2}(r_0)$. However,
  $\phi_{\epsilon_1}(r)<\phi_{\epsilon_2}(r)$ for $0<r<r_0$ and
  equality for $r=r_0$ imply
  $\phi'_{\epsilon_1}(r_0)
  \ge\phi'_{\epsilon_2}(r_0)$. Contradiction. Our claim follows.
\end{proof}

The monotonicity and nonnegativity of
$\epsilon\mapsto\phi_\epsilon(r)$ for each $r\ge0$ allow to consider
the pointwise limit
\[\phi(r) :=\lim\limits_{\epsilon\downarrow0} \phi_\epsilon(r).\]
Assume from now on that $0<\epsilon\le1$. Then $\phi_\epsilon(r)$ is
uniformly bounded on any compact interval $I\subset (0,\infty)$. As
$r$ is also uniformly bounded below, \eqref{eps ode} implies uniform
bounds on $\phi'_\epsilon(r)$ on these intervals. We deduce locally
uniform convergence $\phi_\epsilon\rightrightarrows\phi$ for some
continuous function $\phi\colon(0,\infty)\to\R$. Employing the
equivalent integral equation to \eqref{eps ode}, we can pass to the
limit $\epsilon\downarrow0$ in this formulation. Thus
$\phi\in C^1((0,\infty))$ and it solves the differential equation
\eqref{phi intro eq}. It remains to study $\phi$ near the origin and
to establish $\phi(0)=0$ and in particular $C^2$-regularity of the
corresponding translator.

We start with a lower bound.
\begin{lemma}
  \label{phi eps lower bd lem}
  Let $n\ge2$ and let $\phi_\epsilon$ be a solution to \eqref{eps
    ode}. Then we have the estimate \[\phi_\epsilon(r)\ge\frac rn\]
  from below for all $r\ge0$. 
\end{lemma}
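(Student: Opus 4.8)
The plan is to compare $\phi_\epsilon$ with the explicit barrier $r/n$ by looking at the sign of the difference $w(r):=\phi_\epsilon(r)-\frac rn$ and showing that $w$ cannot cross zero from above. First I would record that $w(0)=0$ and, using \eqref{eps ode}, compute $w'(r)=\left(1+\phi_\epsilon^2(r)\right)\left(1-\frac{n-1}{r+\epsilon}\phi_\epsilon(r)\right)-\frac1n$. Substituting $\phi_\epsilon(r)=\frac rn+w(r)$ and simplifying, the idea is to obtain an identity of the shape
\[
  w'(r)=-\left(1+\phi_\epsilon^2(r)\right)\frac{n-1}{r+\epsilon}\,w(r)
  +\left(1+\phi_\epsilon^2(r)\right)\left(\frac1n-\frac{n-1}{r+\epsilon}\cdot\frac rn\right)-\frac1n,
\]
and then check that the $w$-independent remainder term is nonnegative for all $r\ge0$ (this is where the factor $r/(r+\epsilon)<1$, i.e. the regularisation, does the work): since $\frac{n-1}{r+\epsilon}\cdot\frac rn<\frac{n-1}{n}$, the bracket exceeds $\frac1n-\frac{n-1}n=\frac{2-n}{n}$, which alone is not obviously enough, so I expect to need the nonnegativity of $\phi_\epsilon$ from the previous lemma together with a more careful grouping — most cleanly, rewrite $w'(r)$ so that whenever $w(r)=0$ one has $\phi_\epsilon(r)=\frac rn$ and the right-hand side of \eqref{eps ode} becomes $\left(1+\frac{r^2}{n^2}\right)\left(1-\frac{n-1}{r+\epsilon}\cdot\frac rn\right)$, which is $\ge 1-\frac{n-1}{n}=\frac1n$ because $\frac{r}{r+\epsilon}<1$. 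Hence $w'(r)>0$ at any point where $w(r)=0$.

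The key step is therefore the elementary but slightly delicate inequality: at a putative first zero $r_0>0$ of $w$ (so $w>0$ just before, $w(r_0)=0$), we have $\phi_\epsilon(r_0)=\frac{r_0}{n}$ and
\[
  w'(r_0)=\left(1+\tfrac{r_0^2}{n^2}\right)\left(1-\tfrac{n-1}{r_0+\epsilon}\cdot\tfrac{r_0}{n}\right)-\tfrac1n
  \ge \left(1-\tfrac{n-1}{n}\right)-\tfrac1n + \tfrac{r_0^2}{n^2}\left(1-\tfrac{n-1}{r_0+\epsilon}\cdot\tfrac{r_0}{n}\right),
\]
and the first two terms cancel while the last term is nonnegative since $\phi_\epsilon(r_0)=\frac{r_0}{n}\le\frac{r_0+\epsilon}{n-1}$ by the global upper bound $0\le\phi_\epsilon\le\frac{r+\epsilon}{n-1}$ established earlier. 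So $w'(r_0)\ge 0$, in fact $>0$ unless $r_0=0$; combined with $w(r_0)=0$ and $w>0$ on $(0,r_0)$ this is a contradiction. Therefore $w(r)\ge0$ for all $r\ge0$, i.e. $\phi_\epsilon(r)\ge\frac rn$.

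The main obstacle I anticipate is purely bookkeeping: making sure the cancellation in $w'(r_0)$ is exact and that the leftover term really is signed using only facts already proved (nonnegativity and the upper bound $\phi_\epsilon\le\frac{r+\epsilon}{n-1}$), rather than inadvertently needing the conclusion $\phi_\epsilon\ge\frac rn$ that we are trying to prove. An alternative, if one prefers to avoid the "first crossing" argument, is to invoke the comparison theorem for scalar ODEs directly: $\underline\phi(r):=\frac rn$ satisfies $\underline\phi'(r)=\frac1n\le\left(1+\underline\phi^2(r)\right)\left(1-\frac{n-1}{r+\epsilon}\underline\phi(r)\right)$ for all $r\ge0$ (the same inequality as above), so $\underline\phi$ is a subsolution with $\underline\phi(0)=0=\phi_\epsilon(0)$, and the comparison principle gives $\phi_\epsilon\ge\underline\phi$ on $[0,\infty)$. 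I would present whichever is shorter given the conventions already used in the paper; the shooting section relies on the comparison theorem, so the subsolution phrasing is consistent with the rest of the text.
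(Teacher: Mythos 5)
Your proof is correct and, in its cleaner form (verifying that $\underline\phi(r)=\frac rn$ is a subsolution via $1-\frac{n-1}{r+\epsilon}\cdot\frac rn\ge 1-\frac{n-1}{r+\epsilon}\cdot\frac{r+\epsilon}{n}=\frac1n$ and invoking the comparison theorem), is exactly the paper's argument; your first-crossing version is the same computation made explicit. The intermediate ``identity of the shape'' display contains a slip (a $\frac1n$ where a $1$ should appear in the $w$-independent remainder, which is why that bracket looked too weak), but you correctly abandon it in favour of evaluating the right-hand side on the zero level set of $w$, so nothing in the argument is affected.
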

\begin{proof}
  We wish to show that $\psi(r):=\frac rn$ is a lower barrier,
  i.\,e.{} $\psi(0)\le \phi_\epsilon(0)$ and $\psi'(r)
  \le\left(1+\psi^2(r)\right)
  \left(1-\frac{n-1}{r+\epsilon}\psi(r)\right)$ for all $r\ge0$. A
  direct calculation yields
  \[1-\frac{n-1}{r+\epsilon}\psi(r)
    \ge1-\frac{n-1}{r+\epsilon}\frac{r+\epsilon}n
    =1-\frac{n-1}n=\frac1n.\] As $\psi'(r)=\frac1n$ and
  $1+\psi^2(r)\ge1$, our claim follows.
\end{proof}

The upper bound is more delicate. 
\begin{lemma}
  \label{phi eps upper bd lem}
  Let $n\ge2$. 
  Let $\phi_\epsilon$ be a solution to \eqref{eps ode}. Then we get
  the estimate 
  \[\phi_\epsilon(r) \le \frac{r+\epsilon}n
    +\left(\frac{r+\epsilon}n\right)^2\] from above for all
  $0\le r\le\frac n3$ and $0<\epsilon\le\frac n3$ and thus in
  particular for all $0<r,\epsilon\le\frac23$. 
\end{lemma}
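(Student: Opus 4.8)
The plan is a barrier comparison with the explicit super-solution
\[\bar\psi(r):=\frac{r+\epsilon}n+\left(\frac{r+\epsilon}n\right)^2,\qquad r\ge0,\]
which is exactly the right-hand side of the asserted inequality. Note that $\bar\psi(0)=\frac\epsilon n+\left(\frac\epsilon n\right)^2>0=\phi_\epsilon(0)$. The first step is to verify the \emph{strict} differential inequality
\[\bar\psi'(r)>\left(1+\bar\psi^2(r)\right)\left(1-\frac{n-1}{r+\epsilon}\bar\psi(r)\right)\]
for all $0\le r\le\frac n3$ and $0<\epsilon\le\frac n3$. Introducing $s:=\frac{r+\epsilon}n$, which ranges over $\left(0,\tfrac23\right]$ under these constraints, one computes $\bar\psi=s+s^2$, $\bar\psi'(r)=\frac1n(1+2s)$ and $1-\frac{n-1}{r+\epsilon}\bar\psi=\frac1n\bigl(1-(n-1)s\bigr)$, so after multiplying by $n$ the inequality becomes $1+2s>\bigl(1+(s+s^2)^2\bigr)\bigl(1-(n-1)s\bigr)$. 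Expanding the right-hand side and discarding the nonnegative term $(n-1)s(s+s^2)^2$ reduces this to $(n+1)s>s^2(1+s)^2$, i.e.\ $n+1>s(1+s)^2$; and since $s\le\frac23$ we have $s(1+s)^2\le\frac23\left(\frac53\right)^2=\frac{50}{27}<3\le n+1$.

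The second step is the comparison argument itself. Suppose, for contradiction, that $\phi_\epsilon(r)>\bar\psi(r)$ for some $r\in\left(0,\frac n3\right]$. Since $g:=\phi_\epsilon-\bar\psi$ is continuous with $g(0)<0$, there is a smallest $r_0\in\left(0,\frac n3\right]$ with $g(r_0)=0$, and $g<0$ on $(0,r_0)$; hence the left-sided difference quotient gives $g'(r_0)\ge0$, i.e.\ $\phi_\epsilon'(r_0)\ge\bar\psi'(r_0)$. On the other hand, $\phi_\epsilon(r_0)=\bar\psi(r_0)$ together with \eqref{eps ode} and the strict inequality from the first step gives $\phi_\epsilon'(r_0)=\left(1+\bar\psi^2(r_0)\right)\left(1-\frac{n-1}{r_0+\epsilon}\bar\psi(r_0)\right)<\bar\psi'(r_0)$, a contradiction. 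Thus $\phi_\epsilon(r)\le\bar\psi(r)$ for all $0\le r\le\frac n3$. The final clause is immediate: for $n\ge2$ we have $\frac n3\ge\frac23$, so the case $0<r,\epsilon\le\frac23$ falls into the range already treated.

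I expect the only genuine work to be the bookkeeping for the polynomial inequality $n+1>s(1+s)^2$; the comparison step is routine, precisely because \eqref{eps ode} carries the regularised coefficient $\frac{n-1}{r+\epsilon}$ with $r+\epsilon\ge\epsilon>0$, so there is no singularity on $\left[0,\frac n3\right]$ to contend with. The one point to watch is that it is enough to establish the strict inequality only on $\left(0,\frac n3\right]$ — which is all the first-crossing argument uses — and not on all of $[0,\infty)$, where $\bar\psi$ ceases to be a super-solution; this is why the statement is restricted to $r\le\frac n3$.
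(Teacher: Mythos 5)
Your proof is correct and follows essentially the same route as the paper: the same explicit barrier $\frac{r+\epsilon}n+\left(\frac{r+\epsilon}n\right)^2$, the same substitution $x=\frac{r+\epsilon}n\le\frac23$, and the same numerical bound $\frac23\left(\frac53\right)^2=\frac{50}{27}$ to verify the supersolution inequality. The only difference is that you spell out the first-crossing comparison argument explicitly, which the paper leaves implicit by referring to the analogous lower-bound lemma.
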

\begin{proof}
  We proceed similarly as for the lower bound an wish to show
  that
  \[\psi_\epsilon(r) :=\frac{r+\epsilon}n
    +\left(\frac{r+\epsilon}n\right)^2\] is an upper barrier, i.\,e.{}
  $\psi_\epsilon(0)\ge\phi_\epsilon(0)$ and
  \[\psi'_\epsilon(r)\ge \left(1+\psi^2_\epsilon(r)\right)
    \left(1-\frac{n-1}{r+\epsilon}\psi_\epsilon(r)\right)\] for
  $0\le r\le\frac n3$, $0<\epsilon\le\frac n3$. While the first
  inequality is obviously true, we set $x:=\frac{r+\epsilon}n$ and
  rewrite the second inequality equivalently as
  \begin{align*}
    \psi'_\epsilon(r)
    &=\frac1n +2\frac1n\frac{r+\epsilon}n
      =\frac1n(1+2x)\umbruch\\
    &\overset!\ge \left(1+\psi_\epsilon^2(r)\right)
      \left(1-\frac{n-1}{r+\epsilon} \psi_\epsilon(r)\right)\umbruch\\   
    &=\left(1+x^2(1+x)^2\right)
      \left(1-(n-1)\frac1n(1+x)\right)\umbruch\\
    &=\left(1+x^2(1+x)^2\right) \frac1n(n-n-nx+1+x)\umbruch\\
    &=\frac1n\left(1+x^2(1+x)^2\right)(1-(n-1)x).
  \end{align*}
  This, however, is a direct consequence of
  $x\le\frac2n\frac n3=\frac23$ and
  \[x(1+x)^2\le\frac23\left(\frac53\right)^2=2\cdot\frac{25}{27}\le2.\]
  Observe, that the sign of $(1-(n-1)x)$ is irrelevant.  Our claim
  follows.
\end{proof}

We can now pass to the limit $\epsilon\downarrow0$ in Lemmata \ref{phi
  eps lower bd lem} and \ref{phi eps upper bd lem} to deduce
that \[\frac rn\le\phi(r)\le\frac rn +\left(\frac rn\right)^2\] for
$0\le r\le\frac n3$.  It follows that $\phi(r)$ can be extended
continuously to $[0,\infty)$. Moreover, this estimate implies
that \[\lim\limits_{r\downarrow0} \frac{\phi(r)}r =\frac1n.\]
Concerning the limit $\lim\limits_{r\downarrow0} \phi'(r)$, we proceed
exactly as in the end of Section \ref{approx pbls sec} and conclude
that the corresponding translator is of class $C^2$ at the origin.

\section{Existence via power series}
\label{power series sec}
 It is a pleasure to acknowledge that the idea to use this approach is
due to Heinrich Freist\"uhler. 
\begin{definition}
  Let 
  \[\Sigma^l_2 := \sum\limits_{\genfrac{}{}{0pt}{}{i,j\ge1}{i+j=l}}
    \frac1i \frac1j \qquad\text{and}\qquad\Sigma^l_3 :=
    \sum\limits_{\genfrac{}{}{0pt}{}{i,j,k\ge1}{i+j+k=l}} \frac1i
    \frac1j \frac1k.\] Define also corresponding suprema
  \[\Sigma_2 :=\sup\limits_{l\in\N} \Sigma^l_2\qquad\text{and}\qquad
    \Sigma_3 :=\sup\limits_{l\in\N} \Sigma^l_3.\]
\end{definition}

We start with an estimate for $\Sigma_2^{k-1}$. 
\begin{lemma}
\label{sum 2 lem}
For $k\in\N$, $k\ge3$, we have
$$\Sigma_2^{k-1}=\sum_{\substack{i,j\ge1 \\
    i+j=k-1}}\frac{1}{i}\frac{1}{j}\equiv
\sum_{i=1}^{k-2}\frac{1}{i}\frac{1}{k-1-i} \leq \frac{2\log(k-2)}{k-1}
+\frac{2}{k-2}.$$
\end{lemma}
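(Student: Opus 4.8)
The plan is to estimate the sum $\sum_{i=1}^{k-2}\frac{1}{i}\frac{1}{k-1-i}$ by splitting it at the midpoint and using the symmetry $i\leftrightarrow k-1-i$ of the summand. Writing $m:=k-1$, the sum is $\sum_{i=1}^{m-1}\frac{1}{i(m-i)}$, and since the term for index $i$ equals the term for index $m-i$, I would bound it by $2\sum_{1\le i\le m/2}\frac{1}{i(m-i)}$. For those indices we have $m-i\ge m/2$, so $\frac{1}{m-i}\le\frac{2}{m}$, giving the bound $\frac{4}{m}\sum_{1\le i\le m/2}\frac{1}{i}$. The harmonic sum $\sum_{1\le i\le m/2}\frac{1}{i}$ is then estimated by an integral comparison, $\sum_{i=1}^{N}\frac1i\le 1+\log N$, which yields something like $\frac{4}{m}(1+\log(m/2))$. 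Translating back to $k$ (so $m=k-1$) this is a bound of the shape $\frac{C\log k}{k-1}$, which has the same flavour as the claimed $\frac{2\log(k-2)}{k-1}+\frac{2}{k-2}$.

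The mild obstacle is matching the precise constants in the stated inequality rather than just getting \emph{some} bound of the right order. The cleanest route is probably to peel off the two extreme terms $i=1$ and $i=k-2$, which together contribute exactly $\frac{2}{k-2}$ (accounting for the $+\frac{2}{k-2}$ summand on the right), and then bound the remaining middle terms $\sum_{i=2}^{k-3}\frac{1}{i(k-1-i)}$ more carefully. For $2\le i\le k-3$ one has $i(k-1-i)\ge (k-1)\cdot\min(i,k-1-i)/\text{(something)}$; more usefully, the function $i\mapsto i(k-1-i)$ is concave and on the range $[1,k-2]$ one can compare $\frac{1}{i(k-1-i)}$ termwise with $\frac{1}{k-1}\bigl(\frac1i+\frac1{k-1-i}\bigr)$ via the partial fraction identity
\[
\frac{1}{i(k-1-i)}=\frac{1}{k-1}\left(\frac1i+\frac1{k-1-i}\right).
\]
This identity is exact and is the natural tool here: summing it over $i=1,\dots,k-2$ gives $\frac{2}{k-1}\sum_{i=1}^{k-2}\frac1i=\frac{2}{k-1}H_{k-2}$ where $H_{k-2}$ is the $(k-2)$-th harmonic number.

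So the core computation reduces to $\Sigma_2^{k-1}=\frac{2}{k-1}H_{k-2}$ exactly, and then I need $H_{k-2}\le \log(k-2)+1$, i.e.\ $\frac{2H_{k-2}}{k-1}\le\frac{2\log(k-2)}{k-1}+\frac{2}{k-1}\le\frac{2\log(k-2)}{k-1}+\frac{2}{k-2}$, using $k-1\ge k-2$. The bound $H_{k-2}\le 1+\log(k-2)$ for $k-2\ge1$ follows from the standard integral comparison $H_N-1=\sum_{i=2}^{N}\frac1i\le\int_1^N\frac{dx}{x}=\log N$. The hypothesis $k\ge3$ ensures $k-2\ge1$ so that $\log(k-2)\ge0$ and everything is well-defined. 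The only thing to be a little careful about is the boundary case $k=3$, where the sum has a single term $\frac{1}{1\cdot1}=1$ and the right-hand side is $0+2=2$, so the inequality holds with room to spare; for $k\ge4$ the harmonic-number estimate does the work. I expect no real obstacle — the partial fraction identity makes the whole thing essentially a one-line reduction to a harmonic-number bound.
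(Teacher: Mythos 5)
Your argument is correct, and its final form is a genuinely different (and arguably cleaner) route than the paper's. The paper first compares the sum with the integral $\int_1^{k-2}\frac{dx}{x(k-1-x)}$, using the monotonicity of $x\mapsto x(k-1-x)$ on either side of the midpoint and paying the two boundary terms $\frac{2}{k-2}$ as a correction, and only then applies partial fractions inside the integral to evaluate it as $\frac{2\log(k-2)}{k-1}$. You instead apply the partial fraction identity $\frac{1}{i(k-1-i)}=\frac{1}{k-1}\bigl(\frac1i+\frac1{k-1-i}\bigr)$ directly to the sum, which gives the \emph{exact} closed form $\Sigma_2^{k-1}=\frac{2}{k-1}H_{k-2}$, and then you bound the harmonic number by $H_{k-2}\le 1+\log(k-2)$ and absorb the leftover $\frac{2}{k-1}$ into $\frac{2}{k-2}$. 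All steps check out: the identity, the telescoped symmetry $\sum_{i=1}^{k-2}\frac{1}{k-1-i}=H_{k-2}$, the integral comparison for $H_N$, and the edge case $k=3$. What your approach buys is an exact identity for $\Sigma_2^{k-1}$ before any estimation, which makes the lemma transparent and would also make any sharpening immediate; what the paper's approach buys is a template that generalises more directly to the triple sums $\Sigma_3^{l}$ treated in the following lemma, where no comparably clean closed form is available. Your opening paragraph about splitting at the midpoint is superfluous once you switch to the partial-fraction identity, so you should drop it in a final write-up.
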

\begin{proof}
  We use the monotonicity of $x\mapsto x\cdot(k-1-x)$ for
  $x<\frac{k-1}2$ and $x>\frac{k-1}2$ and obtain
\begin{align*}
  \sum_{i=1}^{k-2}\frac{1}{i}\frac{1}{k-1-i}
  &\leq \int\limits_1^{k-2}\frac{1}{x(k-1-x)}\,\, \text{d}x
    +\frac{2}{k-2}\\ 
  &= \frac{1}{k-1}\int\limits_1^{k-2}\frac{1}{x}+\frac{1}{k-1-x}\,\,
    \text{d}x +\frac{2}{k-2}\\ 
  &= \frac{1}{k-1}\left[\log(x)-\log(k-1-x) \right]\big|_1^{k-2}
    +\frac{2}{k-2}\\ 
  &= \frac{2\log(k-2)}{k-1} +\frac{2}{k-2}
\end{align*}
as claimed. 
\end{proof}

\begin{lemma}
  \label{Sigma 2 est one lem}
  We have $\Sigma_2=1$. 
\end{lemma}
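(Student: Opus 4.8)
The plan is to collapse the double sum to a single harmonic sum by partial fractions and then estimate that. For $i,j\ge1$ with $i+j=l$ one has the elementary identity $\frac1i\frac1j=\frac1l\left(\frac1i+\frac1j\right)$, since $\frac1l\left(\frac1i+\frac1j\right)=\frac1l\cdot\frac{i+j}{ij}=\frac1{ij}$. Summing over all such pairs (there are none when $l\le1$, so $\Sigma_2^0=\Sigma_2^1=0$) yields, for $l\ge2$,
\[\Sigma_2^l=\sum_{i=1}^{l-1}\frac1{i(l-i)}=\frac1l\sum_{i=1}^{l-1}\left(\frac1i+\frac1{l-i}\right)=\frac2l\sum_{i=1}^{l-1}\frac1i.\]
Thus $\Sigma_2^l=\frac2l H_{l-1}$ with $H_m:=\sum_{i=1}^m\frac1i$; in particular $\Sigma_2^2=\frac22H_1=1$, which already gives $\Sigma_2\ge1$.

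It remains to show $\Sigma_2^l\le1$ for all $l\ge2$, i.e.\ $2H_{l-1}\le l$. I would do this by induction on $l$: the base case $l=2$ is $2H_1=2\le2$, and if $2H_{l-1}\le l$ then
\[2H_l=2H_{l-1}+\frac2l\le l+\frac2l\le l+1\]
because $\frac2l\le1$ for $l\ge2$, which is the assertion for $l+1$. Combining the two inequalities, $\Sigma_2=\sup_{l\in\N}\Sigma_2^l=1$.

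Honestly, there is no real obstacle here; the only points deserving care are the empty-sum cases $l\in\{0,1\}$ and checking that the supremum is actually attained rather than merely approached. The latter is transparent from the identity above: a one-line computation gives $\Sigma_2^{l+1}-\Sigma_2^l=\frac{2}{l(l+1)}\left(1-H_{l-1}\right)$, which is $\le0$ for every $l\ge2$ and strictly negative for $l\ge3$, so the sequence $\left(\Sigma_2^l\right)_{l\ge2}$ is non-increasing with $\Sigma_2^2=\Sigma_2^3=1$ the maximal value. This also recovers, and slightly sharpens, the bound of Lemma \ref{sum 2 lem} via $H_{l-1}\le\log(l-1)+1$.
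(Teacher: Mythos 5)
Your proof is correct, and it takes a genuinely different and cleaner route than the paper. The paper verifies $\Sigma_2^l\le1$ by computing the first few terms and then running a two-case parity induction: it writes out $\Sigma_2^{2l}$, $\Sigma_2^{2l+1}$, $\Sigma_2^{2l+2}$ explicitly, matches up consecutive summands, bounds their ratios by a common factor such as $\tfrac{2l}{2l+1}$, and handles the leftover middle term separately. You instead use the partial-fraction identity $\tfrac1{ij}=\tfrac1{i+j}\bigl(\tfrac1i+\tfrac1j\bigr)$ to collapse the convolution into the exact closed form $\Sigma_2^l=\tfrac2l H_{l-1}$, after which the bound reduces to the one-line induction $2H_{l-1}\le l$. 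This buys you strictly more than the paper proves: an explicit formula for every $\Sigma_2^l$, the monotonicity statement $\Sigma_2^{l+1}-\Sigma_2^l=\tfrac{2}{l(l+1)}\bigl(1-H_{l-1}\bigr)\le0$ showing the supremum is attained exactly at $l=2,3$, and (via $H_{l-1}\le\log(l-1)+1$) a slight sharpening of Lemma \ref{sum 2 lem}, whose proof in the paper goes through an integral comparison. The only points needing care, which you handle, are the empty sums for $l\le1$ and the attainment of the supremum at $\Sigma_2^2=1$.
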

\begin{proof}
  The first terms are easily computed as
  \[\Sigma^0_2=\Sigma^1_2=0,\quad
    \Sigma^2_2=\frac11\frac11=1\quad\text{and}\quad
    \Sigma^3_2=\frac11\frac12+\frac12\frac11 =1.\]
  We will now prove by induction on $l$ that $\Sigma^l_2\le1$ for
  every $l\in\N$. \par Let us first consider the ``odd'' case and show
  that for any $l\in\N$, there holds $\Sigma^{2l+2}_2\le1$ if
  $\Sigma^{2l+1}_2\le1$. We may assume that $l\ge1$ and obtain
  \begin{align*}
    \Sigma^{2l+1}_2
    &=2\frac11\frac1{2l} +2\frac12\frac1{2l-1} +\ldots
      +2\frac1{l-1}\frac1{l+2} +2\frac1l\frac1{l+1}
      \intertext{and}
      \Sigma^{2l+2}_2
    &=2\frac11\frac1{2l+1} +2\frac12\frac1{2l} +\ldots
      +2\frac1{l-1}\frac1{l+3} +2\frac1l\frac1{l+2}
      +\frac1{l+1}\frac1{l+1}. 
  \end{align*}
  The first $l$ terms differ by the factors
  \[\frac{2l}{2l+1} > \frac{2l-1}{2l} > \ldots > \frac{l+2}{l+3} >
    \frac{l+1}{l+2}.\] Observe, that each summand is
  positive. Therefore, we obtain the estimate
  \begin{align*}
    \Sigma^{2l+2}_2
    &\le\frac{2l}{2l+1} \Sigma^{2l+1}_2
      +\frac1{l+1}\frac1{l+1}\umbruch\\
    &\le\frac{2l}{2l+1} +\frac1{(l+1)^2}
      \intertext{by our induction hypothesis}
    &=1-\frac1{2l+1}+\frac1{(l+1)^2} \overset!\le1.
  \end{align*}
  This claim is equivalent to $2l+1\le(l+1)^2$ and hence obviously
  true. \par In the ``even'' case, we proceed similarly. We may once
  again assume that $l\ge1$ and get
  \begin{align*}
    \Sigma^{2l}_2
    &=2\frac11\frac1{2l-1} +2\frac12\frac1{2l-2} +\ldots
      +2\frac1{l-1}\frac1{l+1} +\frac1l\frac1l,\umbruch\\
    \Sigma^{2l+1}_2
    &=2\frac11\frac1{2l} +2\frac12\frac1{2l-1} +\ldots
      +2\frac1{l-1}\frac1{l+2} +2\frac1l\frac1{l+1},\umbruch\\
    \frac{2l-1}{2l}
    &>\frac{2l-2}{2l-1} >\ldots >\frac{l+1}{l+2} > \frac
      l{l+1},\umbruch\\
    \Sigma^{2l+1}_2
    &\le\frac{2l-1}{2l} \Sigma^{2l}_2 +\frac1l\frac1{l+1}\umbruch\\
    &\le1-\frac1{2l}+\frac1{l(l+1)} \overset!\le1.
  \end{align*}
  This is fulfilled for any $l\in\N$ such that $2l\le l(l+1)$. \par
  Hence, our claim follows. 
\end{proof}
 
For similar sums with three factors, we have the estimate
\begin{lemma}
  \label{sum 3 lem}
  For $l\in\N$, $l\ge4$, we have
  \begin{align*}
    \Sigma_3^{l-1}
    &=\sum_{\substack{i,j,k\ge1 \\
    i+j+k=l-1}}\frac{1}{i}\frac{1}{j}\frac{1}{k}\equiv
    \sum_{i=1}^{l-3}\sum_{j=1}^{l-2-i}
    \frac{1}{i}\frac{1}{j}\frac{1}{l-1-i-j}\umbruch\\  
    &\leq \frac{4\log(l-2)\log(l-3)}{l-1}+\frac{5\log(l-3)}{l-2}
    +\frac{4}{l-3}.
  \end{align*}
\end{lemma}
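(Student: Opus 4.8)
The strategy is to peel off one summation variable and reduce to the double sums already controlled by Lemma~\ref{sum 2 lem}. Grouping the triple sum according to the value of $j+k$, one has
\[
  \Sigma_3^{l-1}=\sum_{i=1}^{l-3}\frac1i\,\Sigma_2^{\,l-1-i},
\]
where the range $1\le i\le l-3$ is exactly the range for which $l-1-i\ge 2$, i.e.\ for which the inner double sum is non-empty; for $l\ge 4$ this outer sum contains at least the term $i=l-3$, whose inner factor is $\Sigma_2^{2}=1$.

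Next I would apply Lemma~\ref{sum 2 lem} to each inner factor: taking $k=l-i$ there (which satisfies $k\ge 3$ precisely when $i\le l-3$) gives
\[
  \Sigma_2^{\,l-1-i}\le\frac{2\log(l-2-i)}{l-1-i}+\frac{2}{l-2-i},
\]
so $\Sigma_3^{l-1}\le 2S_1+2S_2$ with $S_1=\sum_{i=1}^{l-3}\frac{\log(l-2-i)}{i\,(l-1-i)}$ and $S_2=\sum_{i=1}^{l-3}\frac{1}{i\,(l-2-i)}$. The sum $S_2$ is, after the substitution $i\mapsto l-2-i$, precisely $\Sigma_2^{l-2}$, so a second application of Lemma~\ref{sum 2 lem} (legitimate since $l\ge 4$) bounds $2S_2$ by $\frac{4\log(l-3)}{l-2}+\frac{4}{l-3}$; this already produces the last term of the claim and part of the middle one.

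For $S_1$ I would use the partial fraction identity $\frac1{i(l-1-i)}=\frac1{l-1}\bigl(\frac1i+\frac1{l-1-i}\bigr)$, which separates $S_1$ into two logarithmically weighted harmonic sums sharing the prefactor $\frac1{l-1}$. In the first, the elementary bounds $\log(l-2-i)\le\log(l-3)$ and $\sum_{i=1}^{l-3}\frac1i\le 1+\log(l-3)$ apply; in the second, the substitution $j=l-1-i$ converts it to $\sum_{j=2}^{l-2}\frac{\log(j-1)}{j}\le\log(l-3)\sum_{j=2}^{l-2}\frac1j\le\log(l-2)\log(l-3)$, using $\sum_{j=2}^{m}\frac1j\le\log m$. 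Both of these one-dimensional estimates are routine integral comparisons of the same type as in the proof of Lemma~\ref{sum 2 lem}.

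Adding the contributions of $2S_1$ and $2S_2$ and simplifying with the trivial inequalities $\frac1{l-1}\le\frac1{l-2}$, $\log(l-3)\le\log(l-2)$ and $\log^2(l-3)\le\log(l-2)\log(l-3)$, one arrives at an estimate of the stated shape $\frac{4\log(l-2)\log(l-3)}{l-1}+\frac{5\log(l-3)}{l-2}+\frac{4}{l-3}$. I expect the only genuinely delicate point to be this final bookkeeping of constants: one has to collect several terms of size $O\!\bigl(\frac{\log(l-3)}{l}\bigr)$ and check that their coefficients do not exceed $4$, $5$, $4$, which may require isolating the boundary term $i=1$ of $S_1$ and, if needed, verifying the first few values $l=4,5$ directly. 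The analytic content — the reduction to $\Sigma_2$ and the harmonic and log-harmonic sum bounds — is straightforward.
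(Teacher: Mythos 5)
Your reduction is exactly the paper's: peel off $i$, write $\Sigma_3^{l-1}=\sum_{i=1}^{l-3}\frac1i\Sigma_2^{l-1-i}$, apply Lemma~\ref{sum 2 lem} to the inner factor, and handle $S_2=\Sigma_2^{l-2}$ by a second application of that lemma; the resulting $\frac{4\log(l-3)}{l-2}+\frac{4}{l-3}$ matches the paper term for term. The divergence is in $S_1$, and there your bookkeeping, as written, does not reach the stated constant. Your bounds give
\[
  2S_1\le\frac{2}{l-1}\Bigl[\log(l-3)\bigl(1+\log(l-3)\bigr)+\log(l-2)\log(l-3)\Bigr]
  \le\frac{4\log(l-2)\log(l-3)}{l-1}+\frac{2\log(l-3)}{l-1},
\]
and since $2S_2$ has already consumed $\frac{4\log(l-3)}{l-2}$ of the middle term, you would need $\frac{2}{l-1}\le\frac{1}{l-2}$ to fit inside $\frac{5\log(l-3)}{l-2}$; that inequality is false for every $l\ge4$. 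So your chain of estimates proves the lemma with $6$ in place of $5$ (which would still suffice for Corollary~\ref{drei sum 2 absch cor}, but is not the statement). The remedies you suggest do not close this: the excess is asymptotic, not confined to $l=4,5$, and the $i=1$ term of $S_1$ is the \emph{largest} term, so isolating it gains nothing.

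The factor you are losing sits at the other boundary, where the logarithm vanishes. In $\sum_{i=1}^{l-3}\frac{\log(l-2-i)}{i}$ the term $i=l-3$ is zero, and after your substitution $j=l-1-i$ the term $j=2$ of $\sum_{j=2}^{l-2}\frac{\log(j-1)}{j}$ is zero as well; starting that harmonic sum at $j=3$ and using $\sum_{j=3}^{l-2}\frac1j\le\log(l-2)-\log2$ replaces the additive excess $\frac{2\log(l-3)}{l-1}$ by $\frac{2(1-\log2)\log(l-3)}{l-1}\le\frac{\log(l-3)}{l-2}$, which recovers the constant $5$. The paper avoids partial fractions altogether at this stage: it pulls out $\log(l-3)$, observes that the remaining sum $\sum_{i=1}^{l-4}\frac1{i(l-1-i)}$ is $\Sigma_2^{l-1}$ minus its last two terms $\frac1{l-2}+\frac1{2(l-3)}$, and applies Lemma~\ref{sum 2 lem} a second time; those subtracted boundary terms provide exactly the cancellation that your version is missing. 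Either repair works, but some version of it is genuinely needed, not merely optional polishing.
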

\begin{proof}
  We have for any $l\in\N$, $l\ge4$,
  \begin{align*}
    &\equad\sum_{i=1}^{l-3}\sum_{j=1}^{l-2-i}
      \frac{1}{i}\frac{1}{j}\frac{1}{l-1-i-j}\\  
    &=\sum_{i=1}^{l-3}\frac{1}{i}\sum_{j=1}^{l-2-i}
      \frac{1}{j}\frac{1}{l-1-i-j} \\ 
    &\leq \sum_{i=1}^{l-3}\frac{1}{i}\left( \frac{2\log(l-i-2)}{l-i-1}
      +\frac{2}{l-i-2} \right) 
      \intertext{in view of Lemma \ref{sum 2 lem}}
    &= 2\sum\limits_{i=1}^{l-3}\frac{1}{i} \frac{\log(l-i-2)}{l-i-1}
      +2\sum\limits_{i=1}^{l-3}\frac{1}{i}\frac{1}{l-i-2}\\ 
    &\overset{(*)}\le 2\log(l-3)\sum\limits_{i=1}^{l-4}\frac{1}{i}
      \frac{1}{l-i-1} 
      +2\sum\limits_{i=1}^{l-3}\frac{1}{i}\frac{1}{l-i-2}\\ 
    &= 2\log(l-3)\left[\sum\limits_{i=1}^{l-2} \frac1i
      \frac{1}{l-i-1}-\frac1{l-2}-\frac1{l-3}\frac12\right]
      +2\sum\limits_{i=1}^{l-3}\frac{1}{i}\frac{1}{l-i-2}\\ 
    &\le
      2\log(l-3)\left[\frac{2\log(l-2)}{l-1}
      +\frac{2}{l-2}-\frac1{l-2}-\frac1{l-3}\frac12\right]   
      +2\left[\frac{2\log(l-3)}{l-2}+\frac2{l-3}\right],
      \intertext{where we have used
      Lemma \ref{sum 2 lem} with $k=l$ and $k=l-1$,}  
    &= 2\log(l-3)\left[\frac{2\log(l-2)}{l-1}
      +\frac{l-4}{2(l-2)(l-3)}\right]  
      +2\left[\frac{2\log(l-3)}{l-2}+\frac2{l-3}\right].
  \end{align*}
  In view of $\frac{l-4}{l-3}\le1$, we deduce the claimed estimate.
  \par We wish to remark that the inequality $(*)$ is a natural candidate
  for further improvements.
\end{proof}

\begin{corollary}
  \label{drei sum 2 absch cor}
  For any $l\in\N$, $l\ge3$, we have
  \[\Sigma_3^l
    =\sum\limits_{\substack{i,j,k\ge1\\i+j+k=l}}
    \frac1i\frac1j\frac1k\le\Sigma_3\le 2.\]
\end{corollary}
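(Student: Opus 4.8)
The first inequality $\Sigma_3^l\le\Sigma_3$ holds by the definition of $\Sigma_3$ as a supremum, and $\Sigma_3^l=0$ whenever $l\le2$. Hence the assertion reduces to the claim that $\Sigma_3^l\le2$ for every integer $l\ge3$. I would split this into a ``large $l$'' part, handled by Lemma~\ref{sum 3 lem}, and a ``small $l$'' part, handled by a finite computation.

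For the first part, note that after the substitution $l\mapsto l+1$ the estimate of Lemma~\ref{sum 3 lem} becomes
\[\Sigma_3^l\le g(l):=\frac{4\log(l-1)\log(l-2)}{l}+\frac{5\log(l-2)}{l-1}+\frac{4}{l-2}\qquad(l\ge3).\]
Since $x\mapsto(\log x)^2/x$, $x\mapsto(\log x)/x$ and $x\mapsto1/x$ are decreasing for $x>e^2$, the function $g$ is monotone decreasing once $l$ exceeds a small explicit bound. I would then pin down, by a direct numerical check, the first integer $l_0$ with $g(l_0)\le2$ (this turns out to be around $l_0=37$), and conclude $\Sigma_3^l\le g(l)\le g(l_0)\le2$ for all $l\ge l_0$.

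For the remaining finitely many $l$ with $3\le l<l_0$, each $\Sigma_3^l$ is a finite sum of rationals, obtained by enumerating the partitions of $l$ into three positive summands and weighting each by its number of orderings; a direct evaluation shows every such number is at most $2$. In fact one sees that $\Sigma_3^l$ first increases and then decreases, with maximal value noticeably below $2$, so there is considerable room. Equivalently, one may estimate these values through the decomposition $\Sigma_3^l=\sum_{k=1}^{l-2}\frac1k\,\Sigma_2^{l-k}$, using $\Sigma_2^m\le1$ from Lemma~\ref{Sigma 2 est one lem} for all $m$ and the sharper bound $\Sigma_2^m\le\frac{2\log(m-1)}{m}+\frac{2}{m-1}$ from Lemma~\ref{sum 2 lem} for the arguments $m$ that are not small.

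I expect the main obstacle to be precisely the size of $l_0$: although each of the three terms of $g$ is small already for modest $l$, their sum stays above $2$ for a fair while, so the range of $l$ to be verified by hand is not tiny. Sharpening the estimate $(*)$ in the proof of Lemma~\ref{sum 3 lem}, as suggested in the remark following it, would reduce $l_0$ and thus shorten this bookkeeping; in any case the verification for small $l$ is completely elementary.
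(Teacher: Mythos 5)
Your proposal follows the same overall strategy as the paper: the statement is reduced to $\Sigma_3^l\le2$ for $l\ge3$, Lemma~\ref{sum 3 lem} (with the index shift you describe) handles large $l$, and a finite verification handles the rest. The difference lies in how the crossover point is chosen. The paper coarsens the bound of Lemma~\ref{sum 3 lem} to $6\log^2(l)/l$ and then to $6l^{-1/3}$, which only drops below $2$ for $l\ge e^6$, so the finite check covers $l\le500$ and is delegated to an exact-arithmetic \texttt{SymPy} computation. You instead evaluate the bound $g(l)$ directly and locate the first integer where it falls below $2$ (indeed near $l_0=37$; one checks $g(36)\approx2.02$ and $g(37)\approx1.99$), which shrinks the finite check to roughly $34$ cases that are plausibly doable by hand -- this goes some way toward the ``purely theoretical proof'' the paper says it would have preferred. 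Two points in your write-up are asserted rather than carried out and would need to be supplied: (a) the monotone decrease of $g$ is not quite immediate from the monotonicity of $(\log x)^2/x$ etc., since the first term of $g$ mixes $\log(l-1)$, $\log(l-2)$ and $1/l$; a short direct comparison of $g(l+1)$ with $g(l)$, or a decreasing majorant of $g$ that is still below $2$ at $l_0$, is needed; and (b) the $34$ explicit evaluations of $\Sigma_3^l$ for $3\le l\le36$ must actually be performed -- they do all come out below $2$ (the sequence peaks near $l=8,9$ at about $1.95$), consistent with the paper's computation, but the margin is not large, so the bookkeeping has to be done carefully.
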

\begin{proof}
  We have to show that $\Sigma_3^l\le 2$ for every $l\ge3$.  We divide
  the proof into two parts.\par
  \textbf{Large values of $l$:}
  First, we address large values of $l$,
  then we will give a computer-assisted argument for small values of
  $l$. Assume first that $l\ge e^6$. Then Lemma \ref{sum 3 lem}
  implies that
  \begin{align*}
    \Sigma_3^l
    &\le \frac{4\log(l-1)\log(l-2)}{l}
      +\frac{5\log(l-2)}{l-1} +\frac{4}{l-2}\umbruch\\
    &\le\frac{4\log^2l}l+\frac{l}{l-1}\frac{5\log l}{l}
      +\frac{l}{l-2}\frac4l\umbruch\\
    &\le\frac{4\log^2l}l +\frac{6\log l}{l} +\frac5l\umbruch\\
    &\le\frac{6\log^2l}l.
  \end{align*}
  We claim that for all $x\ge e^6$, we have $x^{1/3}\ge\log x$. At
  $x=e^6$, we have
  $x^{1/3}=\left(e^6\right)^{1/3} =e^2\ge (2.5)^2=6.25 \ge 6
  =\log\left(e^6\right)=\log x$. To extend this inequality to all
  $x\ge e^6$, we compute derivatives and investigate, whether
  $\frac13x^{-2/3}\ge\frac1x$. This is equivalent to
  $x^{1/3}\ge3$ and hence fulfilled for all $x\ge e^6$. We employ this
  estimate and deduce for all
  $l\ge 500\ge 22^2\ge 2.8^6\ge e^6$ that
  \[\Sigma^l_3\le 6\cdot l^{-1/3} \le 6\cdot e^{-2} \le \frac
    6{2.7^2}\le \frac 67\le 2.\] \par\textbf{Small values of $l$:} For
  $l\le500$, we rely on computer algebra calculations. The following
  \texttt{python}-program using \texttt{SymPy} \cite{sympy} verifies
  that $\Sigma_2^l\le1$ and $\Sigma_3^l\le2$ for $l\le500$. Note that
  we have already proven the first inequality for any $l\in\N$ in
  Lemma \ref{Sigma 2 est one lem}.
  % from os/sage/potenzreihe-translatierer-8.py
\begin{verbatim}
from sympy import Rational
maximal = 501
Sigma2 = []
for l in range(0,maximal):
  total = 0
  for i in range(1,l):
    j = l - i
    total = total + Rational(1,i*j)
  if total > Rational(4,5):
    print("{}: {}".format(l, 1-total))
  Sigma2.append(total)

print()
  
for l in range(3,maximal):
  total = 0
  for i in range(1,l-1):
    total = total + Rational(1,i) * Sigma2[l-i]
  if total > Rational(9,5):
    print("{}: {}".format(l, 2-total))
\end{verbatim}
  A few remarks concerning the program.
  \begin{itemize}
  \item We use fractions like \texttt{Rational(4,5)} for $\frac45$
    from \texttt{sympy} to avoid rounding errors.
  \item The values of $\Sigma_2^l$ are stored in \texttt{Sigma2} to
    speed up the computation of $\Sigma_3^l$ using
    \[\Sigma_3^l =\sum\limits_{i=1}^{l-2} \frac1i\cdot\Sigma_2^{l-i}.\]
    We have tested that the results coincide with those that we get when
    we compute $\Sigma_3^l$ with two \texttt{for}-loops similarly to
    $\Sigma_2^l$.
  \item A loop like \texttt{for i in range(1,s):} iterates over the
    following indented lines with $i=1,2,3,\ldots,s-1$. It is
    important to know that $i=s$ is not included.
  \item With \texttt{append}, we append an element to a given list.     
  \item In order not to produce too much output, we only generate
    output when $\Sigma_2^l\ge\frac45$ or $\Sigma_3^l\ge\frac95$. In
    those cases, we print $l$ and $1-\Sigma_2^l$ or $2-\Sigma_3^l$,
    respectively. This is always non-negative and confirms that
    $\Sigma_2^l\le1$ and $\Sigma_3^l\le2$. Technically, this is done
    using \texttt{print} and \texttt{format} for a formatted output.
  \item It takes about 5 seconds to do these computations on an older
    desktop computer. 
\end{itemize}
We would have preferred a purely theoretical proof, but could not find
anything that was not a lot longer and more involved than this
program.
\end{proof}

\begin{definition}
  \label{al def}
  Define for $n\in\N$, $n\ge2$, $a_0:=1$, $a_1:=\frac{1}{n+2}$ and
  inductively
  $$a_l:= -\frac{n-1}{2l+n}\sum_{\substack{i,j,k\ge1 \\ i+j+k=l-1}}
  a_ia_ja_k+\frac{-2n+3}{2l+n}\sum_{\substack{i,j\ge1 \\ i+j=l-1}}a_i
  a_j+\frac{-n+3}{2l+n}a_{l-1}$$ for $l\geq 2$. 
\end{definition}

\begin{remark}
  Let us mention for the readers convenience that the next term
  according to Definition \ref{al def} is given by
  $a_2:=\frac{3-n}{(n + 4)(n + 2)}$.
\end{remark}

Definition \ref{al def} is motivated by the following algebraic lemma.
\begin{lemma}
  \label{ode pol approx lem}
  Let $\psi$ be a polynomial and $\phi(r)=\psi\left(\frac
    rn\right)$. Let $M\in\N$. Then
  \[\phi'(r)
    -\left(1+\phi^2(r)\right)\left(1-\frac{n-1}r\phi(r)\right)\] is a
  polynomial that has a zero of order $2M+2$ at the origin, or,
  equivalently,
  \[\phi'(r)-\left(1+\phi^2(r)\right)
    \left(1-\frac{n-1}r\phi(r)\right)\in O\left(r^{2M+2}\right)\] near
  $r=0$, if and only if
  \[\psi(r)=\sum\limits_{i=0}^M a_ir^{2i+1} +R_M(r)\cdot r^{2M+3},\]
  where the coefficients $a_i\in\R$ are as in Definition \ref{al def} 
  and $R_M$ is an arbitrary polynomial.
\end{lemma}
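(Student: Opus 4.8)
The statement is an "if and only if" between an asymptotic vanishing condition on the defect $\phi'(r)-(1+\phi^2(r))(1-\frac{n-1}{r}\phi(r))$ and an explicit shape for the polynomial $\psi$. My plan is to work entirely with formal/polynomial computations: since $\phi(r)=\psi(r/n)$ and $\psi$ is a polynomial, everything in sight is a polynomial in $r$ (after clearing the single power of $r$ in the denominator), so the condition "$\in O(r^{2M+2})$" just means the coefficients of $r^0,r^1,\dots,r^{2M+1}$ all vanish. First I would substitute $\phi(r)=\psi(r/n)$ and multiply the defect by $r$, obtaining
\[
  r\phi'(r)-r\bigl(1+\phi^2(r)\bigr)+(n-1)\phi(r)\bigl(1+\phi^2(r)\bigr),
\]
which is a genuine polynomial; the defect lies in $O(r^{2M+2})$ iff this polynomial lies in $O(r^{2M+3})$. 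The key structural observation to record up front is a parity argument: if $\psi$ has only odd powers, then $\phi$ is odd, $\phi^2$ is even, $r\phi'$ is even, $r(1+\phi^2)$ is odd, and $(n-1)\phi(1+\phi^2)$ is odd — wait, that is not homogeneous in parity, so instead I would note that the whole expression above, when $\psi$ is odd, is an \emph{even} polynomial in $r$ (each of $r\phi'$, $\phi\cdot\phi^2$, $\phi$, and the lone $r$ times evens is odd, times... ) — more carefully, $r\phi'$ is even, $r$ is odd, $r\phi^2$ is odd, $(n-1)\phi$ is odd, $(n-1)\phi^3$ is odd, so the sum of an even part and odd parts is neither; the correct bookkeeping is that once we \emph{assume} $\psi$ odd the odd-degree coefficients behave one way and we must separately track that assuming the defect vanishes forces $\psi$ odd. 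I would therefore split the proof into the two implications.

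For the "$\Leftarrow$" direction I would plug $\psi(r)=\sum_{i=0}^M a_i r^{2i+1}+R_M(r)r^{2M+3}$ into $\phi(r)=\psi(r/n)$, expand $r\phi'-r(1+\phi^2)+(n-1)\phi(1+\phi^2)$, and collect the coefficient of each power $r^{2l+1}$ for $0\le l\le M$ (the even-power coefficients will turn out to vanish identically by parity once one checks $a_0=1$ makes the constant term cancel). Writing $\phi(r)=\sum a_i (r/n)^{2i+1}$, the term $\phi^3$ contributes the triple convolution $\sum_{i+j+k=\ell}a_ia_ja_k$ and $\phi^2$ the double convolution, exactly matching the two sums in Definition \ref{al def}; the linear term $r\phi'$ contributes $(2l+1)a_l/n^{2l+1}\cdot r^{2l+1}$ and $(n-1)\phi$ contributes $(n-1)a_l/n^{2l+1}\cdot r^{2l+1}$, whose sum is $(2l+1+n-1)a_l/n^{2l+1}=(2l+n)a_l/n^{2l+1}$. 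Setting the total coefficient of $r^{2l+1}$ to zero and solving for $a_l$ reproduces precisely the recursion defining $a_l$ (one checks the base cases $l=0$, forcing $a_0=1$ from the $-r$ term, and $l=1$, giving $a_1=\frac1{n+2}$, separately). Since $R_M\cdot r^{2M+3}$ only affects coefficients of order $\ge 2M+2$ inside $\phi$, after multiplication and the shift these land in $O(r^{2M+3})$ of the auxiliary polynomial, hence $O(r^{2M+2})$ of the defect.

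For the "$\Rightarrow$" direction I would run the same expansion in reverse: write $\psi(r)=\sum_{i\ge0}c_i r^i$ with finitely many nonzero $c_i$, demand that the auxiliary polynomial vanish to order $2M+3$, and extract equations for the $c_i$ recursively. The order-$0$ equation forces $c_0=0$ (there is no constant term possible because of the lone $-r$... actually it forces the constant term, which involves only $c_0$, to vanish, giving $c_0$); proceeding by induction on the degree, at each step the coefficient of $r^m$ determines $c_m$ uniquely in terms of lower $c_i$, and by matching with the "$\Leftarrow$" computation one sees the odd-indexed equations are exactly the recursion for $a_i$ while the even-indexed ones force $c_{2i}=0$. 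Thus $c_{2i}=0$ and $c_{2i+1}=a_i$ for $0\le i\le M$, and the higher coefficients are unconstrained, which is the asserted form with $R_M$ the polynomial built from the $c_i$ with $i\ge 2M+3$ (one must also check $c_{2M+2}=0$ is forced, coming from the last even equation). The main obstacle I anticipate is purely bookkeeping: correctly tracking the powers of $n$ (each $a_i$ comes with $n^{-(2i+1)}$), correctly identifying which convolution sums appear with which prefactor $-\frac{n-1}{2l+n}$, $\frac{-2n+3}{2l+n}$, $\frac{-n+3}{2l+n}$ — in particular the $-2n+3$ and $-n+3$ coefficients must be shown to arise from combining the "$1$" inside $(1+\phi^2)$ against the $\phi^2$ from cross terms and from the $-r\cdot 1$ term — and making sure the parity claim (only odd powers survive) is justified rather than assumed. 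I would isolate that parity fact as the first lemma-internal step, then the rest is a determined induction.
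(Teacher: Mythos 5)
Your overall strategy---clear the singular factor by multiplying the defect by $r$, make a general ansatz $\psi(r)=\sum_i c_i r^i$, compare coefficients, show by induction that all even-index coefficients are forced to vanish, and read the recursion of Definition \ref{al def} off the double and triple convolutions---is exactly the paper's proof; the paper merely rescales first to the equation $\psi'(r)=\left(1+\psi^2(r)\right)\left(n-\frac{n-1}r\psi(r)\right)$ for $\psi$ in its own variable, which spares it the powers $n^{-(2i+1)}$ that you would otherwise have to carry through every convolution.

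Two local points in your sketch need repair before the argument closes. First, your parity bookkeeping goes wrong where you assert that $r\phi'$ is even when $\phi$ is odd: $\phi$ odd gives $\phi'$ even, hence $r\phi'$ \emph{odd}, so every term of $r\phi'(r)-r\left(1+\phi^2(r)\right)+(n-1)\phi(r)\left(1+\phi^2(r)\right)$ is odd and the clean statement you were hunting for does hold---the $r$-multiplied defect is an odd polynomial whenever $\psi$ is odd, which settles the even coefficients in the ``$\Leftarrow$'' direction for free. In the ``$\Rightarrow$'' direction, ``matching with the other computation'' does not by itself force $c_{2i}=0$; you need the induction the paper runs: the coefficient equation for $c_{d+1}$ with $d+1$ even reads $(d+n)c_{d+1}=n\sum_{i+j=d}c_ic_j-(n-1)\sum_{i+j+k=d+1}c_ic_jc_k$, and every summand contains at least one even index, which is either $0$ (and $c_0=0$ is forced by the $r^0$-coefficient $(n-1)c_0\left(1+c_0^2\right)$) or positive, even and smaller than $d+1$, hence zero by the induction hypothesis. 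Second, the coefficients $-(n-1)$, $-2n+3$ and $-n+3$ in Definition \ref{al def} do not arise from ``the $1$ inside $\left(1+\phi^2\right)$ against the $-r\cdot1$ term'': the raw recursion you obtain is $(2l+n)a_l=n\sum_{i+j=l-1,\,i,j\ge0}a_ia_j-(n-1)\sum_{i+j+k=l-1,\,i,j,k\ge0}a_ia_ja_k$, with sums over indices $\ge0$, and the stated coefficients come from peeling off the boundary terms in which one or two indices equal $0$, using $a_0=1$: this gives $n-3(n-1)=-2n+3$ in front of the double sum restricted to $i,j\ge1$ and $2n-3(n-1)=-n+3$ in front of $a_{l-1}$. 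Neither issue changes the architecture of the proof, but both must be fixed for the write-up to be correct.
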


\begin{proof}
  We rewrite the
  equation \[\phi'(r)=\left(1+\phi^2(r)\right)
    \left(1-\frac{n-1}r\phi(r)\right)\] 
  as an equation for $\psi$. We have $\phi'(r)=\frac1n\psi'\left(\frac
    rn\right)$.
  Renaming arguments, we obtain
  \begin{align*}
    \psi'(r)
    &=n\phi'(nr)\umbruch\\
    &=n\left(1+\phi^2(nr)\right)
      \left(1-\frac{n-1}{nr}\phi(nr)\right)\umbruch\\  
    &=\left(1+\psi^2(r)\right)\left(n-\frac{n-1}r\psi(r)\right)\umbruch\\
    &\equiv \text{RHS}(n,r,\psi(r)).
  \end{align*}
  Our claim is now equivalent to showing that
  $\psi'(r)-\text{RHS}(n,r,\psi(r))$ is a polynomial that has a zero
  of order $2M+2$ at the origin if and only if the coefficients of
  $\psi$ in front of $1,r,\ldots,r^{2M+2}$ are as in the claim. \par
  Implicitly, our claim states that some even coefficients of $\psi$
  vanish. This has to be justified in our proof.  Therefore, we start
  with the more general ansatz
  \[\psi(r) =\sum\limits_{i\ge0} b_ir^i,\] where at most finitely many
  $b_i\in\R$ are different from zero. \par
  We wish to rewrite the condition on the order of the zero as a
  condition on the coefficients $b_i$, $1\le i\le M$. We have
  \begin{align*}
    &\equad \psi'(r) -\left(1+\psi^2(r)\right)
      \left(n-\frac{n-1}r\psi(r)\right)\umbruch\\
    &=\sum\limits_{i\ge1} ib_ir^{i-1} -\left(1+\sum\limits_{i,j\ge0}
      b_ib_jr^{i+j}\right) \left(n-(n-1)\sum\limits_{i\ge0}
      b_ir^{i-1}\right)\umbruch\\
    &=\sum\limits_{i\ge0} (i+1)b_{i+1}r^i -n\sum\limits_{i\ge0}
      \delta_{i0}r^i +(n-1)\sum\limits_{i\ge-1} b_{i+1}r^i\\
    &\equad
      -n\sum\limits_{d\ge0} \sum\limits_{\substack{i,j\ge0\\ i+j=d}}
    b_ib_jr^d +(n-1)\sum\limits_{d\ge-1}
    \sum\limits_{\substack{i,j,k\ge0 \\ i+j+k =d+1}} b_ib_jb_k r^d. 
  \end{align*}
  From here, we can read off conditions for a zero at $r=0$ of a
  certain order. First of all, this expression is a polynomial if and
  only if the coefficient in front of $r^{-1}$ vanishes,
  \[0=(n-1)b_0 +(n-1)b_0^3 =(n-1)b_0\left(1+b_0^2\right).\] Therefore,
  we will henceforth assume that $b_0=0$. \par The expression
  $\psi'-RHS$ has a zero of order $2M+2$ at the origin, if for all
  $d=0,1,\ldots,2M+1$, we have
  \begin{align*}
    0
    &=(d+1)b_{d+1} -n\delta_{d0} +(n-1)b_{d+1}
      -n\sum\limits_{\substack{i,j\ge0 \\ i+j=d}} b_ib_j
    +(n-1)\sum\limits_{\substack{i,j,k\ge0 \\ i+j+k=d+1}} b_ib_jb_k. 
  \end{align*}
  We will consider these conditions for $d=0$ and for $d\ge1$
  separately. We use $b_0=0$ and get for $d=0$
  \[0=1\cdot b_1-n+(n-1)b_1 =n\cdot b_1-n\] and thus $b_1=1$. For
  $1\le d\le 2M+1$, we obtain
  \begin{equation}
    \label{bd def eq}
    0=(d+n)\cdot b_{d+1} -n\sum\limits_{\substack{i,j\ge0 \\ i+j=d}}
    b_ib_j +(n-1)\sum\limits_{\substack{i,j,k\ge0\\ i+j+k=d+1}}
    b_ib_jb_k. 
  \end{equation}
  Computing a few $b_i$'s according to that formula by hand suggests
  that these conditions are only fulfilled for all $d$ if $b_i=0$ for
  even values of $i$. This follows also from geometric
  considerations. We are trying to find $U\colon[0,\infty)\to\R$ such
  that $x\mapsto U(|x|)$ is a rotationally symmetric function which is
  a translating solution to mean curvature flow at some fixed
  time. The symmetry implies that $U$ has to be an even
  function. Therefore, $\phi=U'$ and $\psi$ have to be odd. \par We
  now prove this by induction based on \eqref{bd def eq}. We have
  already seen that $b_0=0$. Consider some odd value of
  $1\le d\le2M+1$ and assume that $b_i=0$ for all even $i<d$. Now, we
  have a closer look at \eqref{bd def eq} and use it to compute
  $b_{d+1}$ for even $d+1$. As $d$ is odd, at least one index, $i$ or
  $j$, in the first sum has to be even. Hence, this sum
  vanishes. Similarly for the second sum. As $i+j+k=d+1$ is even, at
  least one index, $i$, $j$ or $k$, has to be even.  Moreover,
  summands can at most be nonzero if $i,j,k\ge1$ and hence
  $i,j,k\le d-1$.  Therefore also this second sum vanishes. We
  conclude that $b_i=0$ for all even $i$ with $i\le 2M+2$ if $b_0=0$
  and \eqref{bd def eq} is fulfilled for all odd $d\le 2M+1$. \par
  Now, we will consider \eqref{bd def eq} for $d=0,2,4,\ldots,2M$. We
  use that $b_i=0$ for even $i$ with $0\le i\le2M+2$ and rewrite
  \eqref{bd def eq} as
  \begin{align*}
    (2l+n)b_{2l+1}
    &=n\cdot\sum\limits_{\substack{i,j\ge0\\ 2i+1+2j+1=2l}}
    b_{2i+1}b_{2j+1}\\
    &\equad-(n-1)\cdot\sum\limits_{\substack{i,j,k\ge0\\
    2i+1+2j+1+2k+1=2l+1}} b_{2i+1}b_{2j+1}b_{2k+1}
  \end{align*}
  for $l=0,1,2,\ldots,M$. We set $a_i:=b_{2i+1}$ and obtain for
  $l=0,1,2,\ldots,M$
  \begin{align*}
    (2l+n)a_l
    &=n\sum\limits_{\substack{i,j\ge0\\ i+j=l-1}}a_ia_j
    -(n-1)\sum\limits_{\substack{i,j,k\ge0\\ i+j+k=l-1}} a_ia_ja_k. 
  \end{align*}
  Although this is not identical to the formula in Definition \ref{al
    def}, it is also an appropriate formula for computing the
  coefficients $a_i$ recursively. The formula in Definition \ref{al
    def}, however, incorporates a few cancellations that we will see
  in the next lines. They will later facilitate our estimates. \par
  Recall that $a_0=b_1=1$. For $l=1$, we obtain
  \begin{align*}
    (n+2)a_1
    &=n\cdot1-(n-1)\cdot1,\umbruch\\
    a_1
    &=\frac1{n+2}. 
  \end{align*}
  Further rewriting this identity yields for $l\ge2$
  \begin{align*}
    (2l+n)a_l
    &=n\sum\limits_{\substack{i,j\ge1\\ i+j=l-1}} a_ia_j +n\cdot2\cdot
    a_{l-1} -(n-1)\sum\limits_{\substack{i,j,k\ge1\\ i+j+k=l-1}}
    a_ia_ja_k\\
    &\equad -3(n-1)\sum\limits_{\substack{i,j\ge1\\ i+j=l-1}} a_ia_j
    -(n-1)\cdot3\cdot a_{l-1}\umbruch\\
    &=-(n-1)\sum\limits_{\substack{i,j,k\ge1\\ i+j+k=l-1}} a_ia_ja_k
    +(-2n+3)\sum\limits_{\substack{i,j\ge1\\ i+j=l-1}} a_ia_j
    +(-n+3)a_{l-1}.
  \end{align*}
  This yields precisely the condition in Definition \ref{al def}. 
  \par For the reverse direction, we essentially use the same
  calculations.  
\end{proof}

\begin{theorem}
  \label{14l thm}
  For $n\in\{2,3,4\}\subset\N$ we have the estimate
  \[|a_l|\leq \frac{1}{4l}\] for all $l\in \N$.
\end{theorem}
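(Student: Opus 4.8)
The plan is to argue by strong induction on $l$, feeding the recursion of Definition~\ref{al def} into the bounds $\Sigma_2=1$ from Lemma~\ref{Sigma 2 est one lem} and $\Sigma_3\le2$ from Corollary~\ref{drei sum 2 absch cor}. For the base cases I would check $l=1$ and $l=2$ by hand: $|a_1|=\frac1{n+2}\le\frac14$ since $n\ge2$, and $|a_2|=\frac{|3-n|}{(n+2)(n+4)}$ equals $\frac1{24}$, $0$, $\frac1{48}$ for $n=2,3,4$ and so is bounded by $\frac18$. These two cases suffice because the induction step below is valid for every $l\ge3$ and refers only to $a_1,\dots,a_{l-1}$.

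For the step, fix $l\ge3$ and assume $|a_i|\le\frac1{4i}$ for $1\le i\le l-1$. Taking absolute values in the recursion and replacing each $|a_i|$ by $\frac1{4i}$, the triple sum is at most $\frac1{64}\Sigma_3^{l-1}\le\frac1{32}$, the double sum is at most $\frac1{16}\Sigma_2^{l-1}\le\frac1{16}$, and the last term $|a_{l-1}|$ is at most $\frac1{4(l-1)}$. Since $n\ge2$, the three coefficients in Definition~\ref{al def} have absolute values $\frac{n-1}{2l+n}$, $\frac{2n-3}{2l+n}$ and $\frac{|n-3|}{2l+n}$, so
\[
  |a_l|\le\frac1{2l+n}\left(\frac{n-1}{32}+\frac{2n-3}{16}+\frac{|n-3|}{4(l-1)}\right).
\]
It remains to check that the right-hand side is at most $\frac1{4l}$. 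Clearing denominators (everything is positive) and using $\frac l{l-1}\le\frac32$ for $l\ge3$, this reduces to the linear inequality
\[
  l\cdot\frac{5n-23}{8}\le n-\frac{3|n-3|}{2}.
\]
For $n\in\{2,3,4\}$ the coefficient $\frac{5n-23}{8}$ is negative, so the left-hand side is negative, whereas the right-hand side equals $\frac12$, $3$, $\frac52$, respectively, and is positive. Hence the inequality holds, and the induction closes.

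The genuinely delicate point --- and precisely the reason for restricting to $n\in\{2,3,4\}$ --- is this last linear inequality: the coefficient $\frac{5n-23}{8}$ turns non-negative at $n=5$ (equivalently, $\frac{n-1}{32}+\frac{2n-3}{16}\le\frac12$ fails), so the argument breaks down for $n\ge5$. The remaining work is routine bookkeeping; the only care needed is to confirm that the constants $\frac1{64}$ and $\frac1{16}$, which arise from combining the target bound $\frac1{4l}$ with $\Sigma_3\le2$ and $\Sigma_2=1$, are small enough for these three dimensions, and that the two base cases $l=1,2$ reach far enough down for the step (valid from $l=3$ on) to start.
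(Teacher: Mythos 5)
Your proposal is correct and follows essentially the same route as the paper: strong induction with base cases $l=1,2$, feeding the inductive bound $|a_i|\le\frac1{4i}$ into the recursion and invoking $\Sigma_2=1$ and $\Sigma_3\le2$ to arrive at exactly the same intermediate estimate $|a_l|\le\frac1{2l+n}\bigl(\frac{n-1}{32}+\frac{2n-3}{16}+\frac{|n-3|}{4(l-1)}\bigr)$. The only (cosmetic) difference is the final arithmetic: you clear denominators and reduce everything to one linear inequality $l\cdot\frac{5n-23}{8}\le n-\frac{3|n-3|}{2}$, which handles all of $n\in\{2,3,4\}$, $l\ge3$ uniformly, whereas the paper uses the cruder bound $\frac1{2(2l+n)}\le\frac1{4l}$ and must then treat the case $n=4$, $l=3$ separately.
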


\begin{proof}
  We use induction on $l$ to prove our claim. We will check it
  individually for $l=0,1,2$. For $l=0$, it is obvious. For $l=1$, we
  have
  \begin{align*}
    a_1
    &=\frac1{n+2}\le\frac1{2+2}=\frac1{4\cdot1}
      \intertext{as claimed and for $l=2$, we obtain}
      |a_2|
    &=\frac{|n-3|}{(n+4)(n+2)}\le\frac1{6\cdot4}\le\frac1{4\cdot2}.
  \end{align*}
  Now, let $l\ge3$ and assume that our claim holds up to $l-1$. We use
  $\Sigma_2\le1$ from Lemma \ref{Sigma 2 est one lem} and
  $\Sigma_3\le2$ from Lemma \ref{drei sum 2 absch cor} to obtain
  \begin{align*}
    |a_l|
    &\leq \frac{n-1}{2l+n}\sum_{\substack{i,j,k\ge1 \\ i+j+k=l-1}}
    |a_i||a_j||a_k|+\frac{2n-3}{2l+n}\sum_{\substack{i,j\ge1 \\
    i+j=l-1}}|a_i| |a_j|+\frac{|n-3|}{2l+n}|a_{l-1}|\umbruch\\ 
    &\leq \frac{n-1}{2l+n}\frac{1}{64}\sum_{\substack{i,j,k\ge1 \\
    i+j+k=l-1}}
    \frac{1}{ijk}+\frac{2n-3}{2l+n}\frac{1}{16}\sum_{\substack{i,j\ge1
    \\ i+j=l-1}}\frac{1}{ij}+\frac{|n-3|}{2l+n}\frac{1}{4(l-1)}\umbruch\\ 
    &\leq
      \frac{1}{2l+n}\cdot\frac12\left(\frac{n-1}{32}\cdot\Sigma_3
      +\frac{2n-3}{8}\cdot\Sigma_2+\frac{|n-3|}{2(l-1)}\right)\umbruch\\ 
    &\leq
      \frac{1}{2l+n}\cdot\frac12\left(\frac{n-1}{16}
      +\frac{2n-3}{8}+\frac{|n-3|}{2(l-1)}\right)\umbruch\\
    &=
      \frac{1}{2l+n}\cdot\frac12\left(\frac{5n-7}{16}
      +\frac{|n-3|}{2(l-1)}\right).
  \end{align*}
  If $n\in\{2,3\}$, we use $l\ge3$ and deduce
  \[|a_l|
    \le\frac1{4l}\left(\frac{8}{16}+\frac1{2\cdot2}\right)
    \le\frac1{4l}\cdot\frac34 \le\frac1{4l}.\]
  Similarly, we have for $n=4$ and $l\ge4$
  \[|a_l|\le\frac1{4l}\left(\frac{13}{16}+\frac1{2\cdot3}\right)
    =\frac1{4l}\cdot\frac{39+8}{48}
    =\frac1{4l}\cdot\frac{47}{48}\le\frac1{4l}.\]
  In the remaining case $n=4$ and $l=3$, we calculate
  \[|a_3|\le\frac1{6+4}\cdot\frac12
    \left(\frac{13}{16}+\frac1{2\cdot2}\right) 
    =\frac1{20}\cdot\frac{17}{16}\le\frac1{16}\le\frac1{12}
    =\frac1{4l}.\] 
  As we have established the claimed inequality in all cases, our
  Theorem follows.   
\end{proof}

\begin{remark}
  \neueZeile
  \begin{enumerate}[label=(\roman*)]
  \item For $n\ge5$, the inequality
    \[\frac1{2l+n}\cdot\frac12
      \left(\frac{5n-7}{16}+\frac{n-3}{2(l-1)}\right)\le\frac1{4l}\]
    becomes false for $l\in\N$ sufficiently large as the term in
    parantheses is strictly larger than one.
  \item Naive computer computations suggest that the decay rate
    obtained in Theorem \ref{14l thm} is not at all optimal. Our
    \texttt{python}-calculations yield for $l<500$
    \[|a_l|\le e^{-\lambda(n)\cdot l}\]
    with $\lambda(2)>1.09$, $\lambda(n)>0.50$ for $n\le20$ and
    $\lambda(n)>0.34$ for $n\le50$.
  \item Such exponential estimates, however, would also not guarantee
    in infinite radius of convergence for our power series.
    % potentzreihe-translatierer-10.py
  \item Moreover, it is not clear to us how to inductively show such
    an estimate based on Definition \ref{al def}.     
  \end{enumerate}
\end{remark}

\begin{theorem}
  Let $n\in\{2,3,4\}$. Then the power series
  \[\psi(r):=\sum\limits_{i=0}^\infty a_i r^{2i+1}\]
  with coefficients defined in Definition \ref{al def} converges on
  $(-1,1)$ and $\phi(r):=\psi(r/n)$ converges on $(-n,n)$ and solves
  \[\phi'(r)=\left(1+\phi^2(r)\right)\left(1-\frac{n-1}r\phi(r)\right).\]
\end{theorem}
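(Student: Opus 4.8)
The plan is to combine the coefficient decay of Theorem~\ref{14l thm} with the algebraic identity of Lemma~\ref{ode pol approx lem}, the latter now read at the level of convergent power series rather than of polynomials.

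\emph{Convergence.} By Theorem~\ref{14l thm} together with $a_0=1$ one has $|a_l|\le1$ for every $l\in\N$, hence $\limsup_{l\to\infty}|a_l|^{1/l}\le1$ and the power series $\sum_{i\ge0}a_i x^i$ has radius of convergence at least $1$. Therefore $\psi(r)=\sum_{i\ge0}a_i r^{2i+1}=r\sum_{i\ge0}a_i(r^2)^i$ converges absolutely for $|r|<1$, and $\phi(r)=\psi(r/n)$ converges absolutely for $|r|<n$. Since only odd powers occur in $\psi$, the quotient $\psi(r)/r=\sum_{i\ge0}a_ir^{2i}$ is again a power series of radius of convergence $\ge1$; likewise $\psi'$, $\psi^2$, and their products with $\psi(r)/r$ are convergent power series on $(-1,1)$, obtained by term-by-term differentiation and by Cauchy products.

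\emph{Solving the equation.} Consider
\[E(r):=\psi'(r)-\bigl(1+\psi^2(r)\bigr)\Bigl(n-\tfrac{n-1}{r}\psi(r)\Bigr),\]
which by the previous observation is a convergent power series on $(-1,1)$, with no negative powers of $r$ because $\psi$ vanishes at the origin. Its coefficients are precisely the quantities computed in the proof of Lemma~\ref{ode pol approx lem}: writing $b_0=0$, $b_1=1$, $b_{2i+1}=a_i$, $b_{2i}=0$, the coefficient of $r^0$ in $E$ equals $nb_1-n=0$, and for $d\ge1$ the coefficient of $r^d$ in $E$ equals
\[(d+n)b_{d+1}-n\sum_{\substack{i,j\ge0\\i+j=d}}b_ib_j+(n-1)\sum_{\substack{i,j,k\ge0\\i+j+k=d+1}}b_ib_jb_k,\]
which vanishes by \eqref{bd def eq}, i.e.\ by the very recursion of Definition~\ref{al def} under which the $a_i$ were defined. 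Hence $E\equiv0$, so $\psi'(r)=(1+\psi^2(r))\bigl(n-\tfrac{n-1}{r}\psi(r)\bigr)$ on $(-1,1)\setminus\{0\}$. Substituting $\phi(r)=\psi(r/n)$ and $\phi'(r)=\tfrac1n\psi'(r/n)$ gives $\phi'(r)=(1+\phi^2(r))\bigl(1-\tfrac{n-1}{r}\phi(r)\bigr)$ for $r\in(-n,n)\setminus\{0\}$, which is the assertion.

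\emph{Main obstacle.} The one point that needs care is the transition from the polynomial identity of Lemma~\ref{ode pol approx lem} to an identity of convergent power series: one uses that, inside the common radius of convergence, $\psi^2$ and $(1+\psi^2)\cdot\psi/r$ are represented by their Cauchy product series and that $\psi$ may be differentiated term by term, so that the coefficient computation of Lemma~\ref{ode pol approx lem} applies verbatim in the limit $M\to\infty$. Everything else is routine, namely the convergence radii above and the elementary arithmetic relating $\phi$ to $\psi(\cdot/n)$. Finally, since $\psi(r)/r$ extends analytically across $0$ with value $a_0=1$, both limits $\lim_{r\downarrow0}\phi(r)/r$ and $\lim_{r\downarrow0}\phi'(r)$ equal $\tfrac1n$, so the regularity condition of Proposition~\ref{reg orig prop} holds and the resulting translator is of class $C^2$, in fact real-analytic.
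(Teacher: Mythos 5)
Your proposal is correct and follows essentially the same route as the paper: bound $|a_l|\le1$ via Theorem~\ref{14l thm} to get the radius of convergence, then observe that each coefficient of the residual power series $E(r)$ is a finite algebraic expression in the $a_l$ that vanishes by the recursion established in Lemma~\ref{ode pol approx lem}. The paper phrases the last step via the partial sums $\psi_k$ and the fact that $b_i$ depends only on $a_l$ with $l\le i$, while you justify it directly through Cauchy products and term-by-term differentiation, but this is the same argument.
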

\begin{proof}
  As $|a_i|\le1$ for all $i\in\N$, the convergence radius of $\psi$ is
  at least $1$ according to the root test for power series. This means
  that \[\psi_k(r):=\sum\limits_{i=0}^k a_i r^{2i+1}\] converges
  locally uniformly on $(-1,1)$ to a function $\psi$. We also obtain
  locally uniform convergence of the other terms in the differential
  equation, in particular $\psi_k'\to\psi'$, $\psi_k^2\to\psi^2$ and
  $\frac{\psi_k(r)}r\to\frac{\psi(r)}r$.
  Consider
  \[\Psi(\psi,r):=\psi'(r)-\left(1+\psi^2(r)\right)
    \left(n-\frac{n-1}r\psi(r)\right).\] We see that
  $r\mapsto\Psi(\psi,r)$ is analytic and also converges on
  $(-1,1)$. Our claim is that $\Psi(\psi,r)=0$. As $\Psi(\psi,r)$ can
  be written as a power series $\sum\limits_{i=0}^\infty b_ir^{2i}$,
  it suffices to show that $b_i=0$ for any $i\in\N$.  Similarly, we
  can write $\Psi(\psi_k,r) =\sum\limits_{i=0}^{N(k)}
  b_{k,i}r^{2i}$. By construction, see Lemma \ref{ode pol approx lem},
  we have $b_{k,i}=0$ for $k>i$. Since $b_i$ depends only on $a_l$,
  $l\le i$, we infer $b_i=b_{k,i}=0$ for $k>i$.\qedhere
\end{proof}

\begin{remark}
  % potenzreihe-translatierer-11.py
  Numerical calculations suggest that the radius $r$ of convergence
  for $\phi$ is indeed finite and approximately given by the values in
  the following tabular.
  
  \begin{center}
    \begin{tabular}{|c||c|c|c|c|c|c|c|c|c|}
      \hline
      $n$ & 2 & 3 & 4 & 5 & 6 &  7 &  8 &  9 & 10 \\\hline\hline
      $r$ &3.4&4.9&6.3&7.6&8.9&10.2&11.4&12.7&13.9\\\hline
    \end{tabular}
  \end{center}
\end{remark}

\def\emph#1{\textit{#1}}
\bibliographystyle{amsplain}
%\bibliography{/Users/os/uni/math/bib/biblio}

\begin{thebibliography}{1}

\bibitem{AltschulerWu}
Steven~J. Altschuler and Lang~F. Wu, \emph{Translating surfaces of the
  non-parametric mean curvature flow with prescribed contact angle}, Calc. Var.
  Partial Differential Equations \textbf{2} (1994), no.~1, 101--111.

\bibitem{JCOSFSMCFStability}
Julie Clutterbuck, Oliver~C. Schn\"urer, and Felix
  Schulze\weg{JCOSFSMCFStability}, \emph{Stability of translating solutions to
  mean curvature flow}, Calc. Var. Partial Differential Equations \textbf{29}
  (2007), no.~3, 281--293.

\bibitem{ScherklikeTranslators}
David Hoffman, Francisco Mart\'in, and Brian White, \emph{Scherk-like
  translators for mean curvature flow}, J. Differential Geom. \textbf{122}
  (2022), no.~3, 421--465.

\bibitem{sympy}
Aaron Meurer, Christopher~P. Smith, Mateusz Paprocki, Ond\v{r}ej
  \v{C}ert\'{i}k, Sergey~B. Kirpichev, Matthew Rocklin, AMiT Kumar, Sergiu
  Ivanov, Jason~K. Moore, Sartaj Singh, Thilina Rathnayake, Sean Vig, Brian~E.
  Granger, Richard~P. Muller, Francesco Bonazzi, Harsh Gupta, Shivam Vats,
  Fredrik Johansson, Fabian Pedregosa, Matthew~J. Curry, Andy~R. Terrel,
  \v{S}t\v{e}p\'{a}n Rou\v{c}ka, Ashutosh Saboo, Isuru Fernando, Sumith Kulal,
  Robert Cimrman, and Anthony Scopatz, \emph{Sympy: symbolic computing in
  python}, PeerJ Computer Science \textbf{3} (2017), e103.

\bibitem{MorreyMultiple}
Charles~B. Morrey, Jr., \emph{Multiple integrals in the calculus of
  variations}, Classics in Mathematics, Springer-Verlag, Berlin, 2008, Reprint
  of the 1966 edition.

\end{thebibliography}
\def\weg#1{} \def\unterstrich{\underline{\rule{1ex}{0ex}}} \def\cprime{$'$}
  \def\cprime{$'$} \def\cprime{$'$} \def\cprime{$'$}
\providecommand{\bysame}{\leavevmode\hbox to3em{\hrulefill}\thinspace}
\providecommand{\MR}{\relax\ifhmode\unskip\space\fi MR }
% \MRhref is called by the amsart/book/proc definition of \MR.
\providecommand{\MRhref}[2]{%
  \href{http://www.ams.org/mathscinet-getitem?mr=#1}{#2}
}
\providecommand{\href}[2]{#2}

\end{document}